\documentclass[12pt]{article}
\usepackage{amsmath}
\usepackage{amsfonts}
\usepackage{amsthm}
\usepackage{graphicx}
\usepackage{overpic}
\usepackage{a4wide}
\usepackage{amssymb} 
\usepackage{pictex}
\usepackage{rotating}  
\usepackage{color}
\usepackage{etex} 
\usepackage{enumitem}
\usepackage{accents}

\definecolor{refkey}{rgb}{0,0,1}
\definecolor{labelkey}{rgb}{1,0,0}

\usepackage{verbatim}

\numberwithin{equation}{section}

\newtheorem{theorem}{Theorem}[section]
\newtheorem{proposition}[theorem]{Proposition}
\newtheorem{lemma}[theorem]{Lemma}
\newtheorem{corollary}[theorem]{Corollary}
\newtheorem{Definition}[theorem]{Definition}
\newtheorem*{notation*}{Notation}
\newtheorem*{remark*}{Remark}

\newtheorem{Remark}[theorem]{Remark}
\newenvironment{remark}{\begin{Remark}\rm}{\end{Remark}}
\newtheorem{Example}[theorem]{Example}
\newenvironment{example}{\begin{Example}\rm}{\end{Example}}
\newtheorem{RHproblem}[theorem]{RH problem}

\usepackage{color}

\newcommand{\C}{\mathbb{C}}

\newcommand{\Z}{\mathbb{Z}}
\newcommand{\N}{\mathbb{N}}

\newcommand{\stau}{\overline\tau}

\newcommand{\PP}{\mathbb P}

\renewcommand{\hat}{\widehat}
\renewcommand{\tilde}{\widetilde}

\newcommand{\calC}{\mathcal{C}}
\newcommand{\calM}{\mathcal{M}}

\newcommand{\lot}{\text{\sc lot}}
\newcommand{\calN}{\mathcal{N}}
\newcommand{\calZ}{\mathcal{Z}}
\newcommand{\calB}{\mathcal{B}}
\newcommand{\calS}{\mathcal{S}}

\newcommand{\VDM}{\mathrm{VDM}}

\newcommand{\bV}{\mathbf{V}}
\newcommand{\bv}{\mathbf{v}}

\newcommand{\bb}{\mathbf{b}}

\usepackage[bookmarksopen, naturalnames]{hyperref}


\begin{document}
 
\title{Pluripotential theory on algebraic curves}
\author{Norm Levenberg and Sione Ma'u}
\maketitle 
\begin{abstract} In previous works, the second author defined directional Robin constants associated to a compact, nonpolar subset $K$ of an algebraic curve $A$ in $\C^N$ and related these to a natural class of Chebyshev constants for $K$. We define a second class of Chebyshev constants for $K$; relate these two classes; and utilize each of them to define two families of extremal-like functions which can be used to recover the Siciak-Zaharjuta extremal function for $K$.

\end{abstract}

\section{Introduction} In \cite{BM}, \cite{HM} and \cite{M}, the authors defined directional Robin constants associated to a compact, nonpolar subset $K$ of an algebraic curve $A$ in $\C^N$ and related these constants to certain Chebyshev constants for $K$. In this work we continue this investigation and delve more deeply into these -- and other -- relations. In this introductory section, we describe the precise setting and state our main results. 

Let $A$ be an irreducible algebraic curve in $\C^N$. We let  $SH(A)$ denote the weakly subharmonic (shm) functions on $A$: $u\in SH(A)$ if $u$ is uppersemicontinuous (usc) on $A$ and shm on $A^0$, the regular points of $A$. Now let $K\subset A$ be nonpolar, i.e., $K\cap A^0$ is not polar as a subset of the complex curve $A^0$. We can consider the Siciak-Zaharjuta extremal function 
$$V_K(z):=\sup \{\frac{1}{deg(p)}\log |p(z)|: ||p||_K:=\max_{\zeta \in K}|p(\zeta)| \leq 1, \ p \ \hbox{holomorphic polynomial}\}$$
for $z\in \C^N$. 
From Sadullaev's theorem \cite{S}, we know that 
$V_K|_A$ is locally bounded on $A$; and for $z\in A$, 
$$V_K|_A^*(z):=\limsup_{\zeta \to z, \ \zeta \in A}V_K(\zeta)$$
is in $SH(A)$ and it is harmonic on $A^0\setminus K$. Moreover, defining 
$$L(A):=\{u \in SH(A): u(z) \leq \log^+|z|+c_u=\max[0,\log |z|]+c_u\}$$
where $c_u$ is a constant depending on $u$, $V_K|_A=V_{K,A}$ on $A^0$ where
$$V_{K,A}(z)=\sup \{u(z): u\in L(A), \ u\leq 0 \ \hbox{on} \ K\},$$
and $V_{K,A}^*\in L^+(A)$ where 
$$L^+(A):=\{u \in L(A): \log^+|z|-c_u \leq u(z) \leq \log^+|z|+c_u\}.$$
For simplicity, we simply write $V_K:=V_{K,A}=V_K|_A$ and $V_K^*:=V_K|_A^*$ as we only consider points $z\in A$. We say $K$ is {\it regular} if $V_K$ is continuous on $A^0$. As an example, if we choose a basis for $\C^N$ so that 
$$A\subset \{(z_1,z'): |z'|^2  < C(1+|z_1|^2)\}$$
where $z'=(z_2,...,z_{N})$ and $C>0$, then for $K:=\{z\in A: |z_1|\leq r\}$ we have $V_K(z)=\log^+|z_1|/r:=\max[0,\log |z_1|/r]$ (cf., \cite{S}, p. 497). 

Now 
suppose $A=\{z\in \C^N:P(z)=0\}$ where $P$ is an irreducible polynomial of degree $d$ and that $\pi:A\subset \C^N \to \C$ via $\pi(z)=z_1$ is a $d-$sheeted covering map of $A$ over $\C\setminus V$ where $V$ is a finite set of points. Following \cite{BM}, \cite{HM} and \cite{M}, we assume $A$ satisfies the following condition:

{\it The algebraic curve $A$ has $d$ distinct non-parallel linear asymptotes $L_1,...,L_d$; and each $L_j$ may be parameterized by $t\to {\bf c_j}+t{\bf \lambda_j}, \ t\in \C$, where 
${\bf c_j}=(c_{j1},...,c_{jN}), \ {\bf \lambda_j}=(1,\lambda_{j2},...,\lambda_{jN})$ and $\lambda_{jm}\ne \lambda_{km}$ if $j\not = k$ for $m=2,...,N$.}

\noindent Here, a linear asymptote of $A$ is a complex line $L\subset \C^N$ with 
$$\lim_{|z|\to \infty, \ z\in L} |z-z_A|=0$$
where $z_A$ is a closest point to $L$ contained in $H_z\cap A$ where $H_z$ is the complex hyperplane orthogonal to $L$ through $z$. We call $\{{\bf \lambda_j}\}_{j=1,...,d}$ the set of directions of $A$.

For example, if $N=2$ we can write $A=\{(z_1,z_2) \in \C^2: P(z_1,z_2)=0\}$ where 
$$P(z_1,z_2)=\sum_{j=0}^d h_j(z_1,z_2)$$
is an irreducible polynomial of degree $d$, and the sum is a decompositon of $P$ into its homogeneous components $h_j$ of degree $j$. In particular,
$$A_h :=\{(z_1,z_2)\in \C^N: h_d(z_1,z_2))=0\}$$
is the associated homogeneous variety, and we can write
$$h_d(z_1,z_2)=C\prod_{j=1}^d (z_2 - \lambda_j z_1).$$
Letting $\tilde L_j =\{(z_1,z_2):z_2 - \lambda_j z_1=0\}$, we have 
$A_h=\cup_j \tilde L_j$ and these lines $\tilde L_j$ are parallel to the linear asymptotes for $A$. Then $A$ satisfies the italicized condition above precisely when the $\lambda_j$ are distinct and nonzero.

We record some relevant items from \cite{HM}:

\begin{proposition} \label{hmstuff} Given $\epsilon >0$, there exists $R=R(\epsilon)>0$ and $B=B(R)=\{z\in \C^N: |z|<R\}$ such that
\begin{enumerate}
\item $A\setminus \bar B\subset A^0$;
\item $A\setminus \bar B=D_1\cup \cdots \cup D_d$ where $D_1,...,D_d$ are pairwise disjoint domains in $A$;
\item for each $j=1,...,d$, $dist(D_j,L_j)< \epsilon$; 
\item for each $j=1,...,d$, the projection $\pi_j: D_j \to \C$ given by $\pi_j(z_1,z')=z_1$ is one-to-one.

\end{enumerate}

\end{proposition}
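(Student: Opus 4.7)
The plan is to describe the structure of $A$ outside a sufficiently large ball as a disjoint union of $d$ holomorphic graphs, one near each asymptote, and then read off (1)--(4) directly. Since $A$ is an irreducible algebraic curve in $\C^N$, the singular locus $A\setminus A^0$ is a proper Zariski closed subset and hence finite. Choosing $R_0$ so that $A\setminus A^0 \subset B(0,R_0)$ yields (1) for every $R\geq R_0$. It remains to choose $R$ larger to arrange the other three properties.

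The heart of the argument is to split $A$ at infinity into $d$ graphs over the $z_1$-plane. Enlarging $R_0$, I may assume the exceptional set $V$ of the covering $\pi: A \to \C$, $\pi(z)=z_1$, is contained in $\{|z_1| < R_1\}$; then $\pi$ restricts to an unramified $d$-sheeted covering over $\{|z_1|>R_1\}$. The connected components of this covering correspond to the Puiseux branches of $A$ at infinity. A branch of ramification index $k$ over $\infty$ is parameterized by $t\mapsto (t^k, z'(t))$ on $\{|t|>r\}$ and has a single asymptotic direction determined by the leading Puiseux terms; the $k$ sheets it contributes project to one and the same line at infinity. Since the hypothesis supplies $d$ \emph{distinct} non-parallel linear asymptotes and $\pi$ is $d$-sheeted, every branch must have ramification index $1$. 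Hence the covering splits as $d$ disjoint holomorphic graphs $A_j=\{(z_1,f_j(z_1)) : |z_1|>R_1\}$ with $f_j:\{|z_1|>R_1\}\to \C^{N-1}$ holomorphic, and the asymptote hypothesis for $L_j$ gives the asymptotic expansion
\[
f_j(z_1) \;=\; (c_{j2}+(z_1-c_{j1})\lambda_{j2},\,\ldots\,,\,c_{jN}+(z_1-c_{j1})\lambda_{jN}) \;+\; o(1) \qquad (|z_1|\to\infty).
\]

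With this decomposition, (2)--(4) are now routine. For (3), the distance from $(z_1,f_j(z_1))\in A_j$ to $L_j$ is bounded by the norm of the $o(1)$ remainder above, which is $<\epsilon$ once $|z_1|$ is large enough. Property (4) is immediate since each $A_j$ is a graph over the $z_1$-plane. For (2), note that $|z|\geq |z_1|$ on $A$ and $|z|=O(|z_1|)$ on each $A_j$, so for $R$ large $A\setminus\overline B = \bigsqcup_j D_j$ where $D_j:=A_j\cap\{|z|>R\}$ is a graph over a region of the form $\{|z_1|>r_j\}$ in $\C$, hence open and connected in $A$; disjointness of the $D_j$ follows from the leading asymptotics, for if $(z_1,f_j(z_1))=(z_1,f_k(z_1))$ with $j\neq k$ and $|z_1|$ large then the leading terms force $\lambda_{jm}=\lambda_{km}$ for all $m\geq 2$, contradicting the hypothesis. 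The main obstacle is the Puiseux/monodromy step in the middle paragraph: that is the unique place where the three hypotheses (covering degree $d$, existence of $d$ linear asymptotes, distinctness of directions) are used in concert to rule out ramified branches at infinity; once the $d$ sheets $f_j$ are in hand, the rest is bookkeeping.
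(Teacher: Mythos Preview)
The paper does not prove this proposition; it is quoted without proof from \cite{HM} (Hart--Ma'u), as signalled by the sentence ``We record some relevant items from \cite{HM}'' immediately preceding it. So there is no in-paper argument to compare against.

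Your sketch is essentially the standard proof and the strategy is sound: the monodromy/Puiseux step is the correct heart of the argument, and your counting (the $d$ distinct non-parallel asymptotic directions force $d$ unramified branches over $\{|z_1|>R_1\}$, hence $d$ holomorphic graphs $f_j$) is right. Two points deserve a sentence more. First, to get $A\setminus\overline B\subset\bigcup_j A_j$ you need that the part of $A$ lying over $\{|z_1|\le R_1\}$ is bounded; this is not automatic from ``$|z|=O(|z_1|)$ on each $A_j$'' and uses that $A$ has no asymptotic direction with vanishing first coordinate (which follows since the $d$ points $[1:\lambda_{j2}:\cdots:\lambda_{jN}]$ already exhaust $\overline A\cap H_\infty$ by a degree count). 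Second, your claim that $D_j=A_j\cap\{|z|>R\}$ is a graph over a set of the form $\{|z_1|>r_j\}$ is not quite right: $\pi_j(D_j)$ need not be a round exterior disk. What you actually need is that $D_j$ is connected; this follows because $g_j(z_1):=|z_1|^2+|f_j(z_1)|^2$ is eventually strictly increasing along rays (its radial derivative is $2(1+|\lambda_j|^2)r+o(r)$), so $\{g_j>R^2\}$ is star-shaped with respect to $\infty$ for $R$ large. Your disjointness argument is correct but overkill: once the sheets $A_j$ are the connected components of an unramified cover they are automatically disjoint.
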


From item 4), 
$$A\setminus \bar B=\{(z_1,z'): (z_1,s_j(z_1)), \ j=1,...,d\}$$
where the $s_j(z_1)$ are distinct for $|z_1|>R$. Thus if $u\in SH(A)$, $u$ can be thought of as $d$ shm functions $u_1,...,u_d$ on each ``branch'' 
$$A(j):=\{(z_1,s_j(z_1)): |z_1|>R \}.$$
In particular, given $u\in L(A)$, we define $d$ Robin constants associated to $u$:
\begin{equation} \label{robindef} \rho_{u(j)}:= \limsup_{|z|\to \infty, \ z \in A(j)} [u(z) - \log |z|].
\end{equation}  
Then, in the notation above, we have 
$$\rho_{u(j)}=\limsup_{|z|\to \infty, \ z\in D_j}[u(z)-\log |z_1|]=\limsup_{|z_1|\to \infty, \ z \in A, \ z/z_1\to {\bf \lambda_j}}[u(z)-\log |z_1|].$$
We write $\C[A]$ for the coordinate ring of $A$, i.e., $\C[A]=\C[z]/\textlangle P(z)\textrangle$, and for a polynomial $p$, its degree $deg(p)$ will refer to its degree as an element of $\C[A]$. In particular, for a polynomial $p_n$ of degree $n$ in $\C[A]$, the Robin constants associated to $u:=\frac{1}{n} \log |p_n|$ can be computed as 
\begin{equation}\label{robpoly}  \rho_{u(j)}=\frac{1}{n} \log |\hat p_n({\bf{\lambda_j}})|,  \ j=1,...,d \end{equation}
where $\hat p_n$ is the top degree ($n$) homogeneous part of $p_n$. 
For $K\subset A$ nonpolar, since $V_K^*$ is a harmonic function on $A^0\setminus K$, the limits 
$$\rho_K(\lambda_j):=\rho_{V_K^*(j)}=\lim_{|z|\to \infty, \ z \in A(j)} [V_K^*(z) - \log |z|], \ j=1,...,d$$
exist. 

\begin{example} As a trivial example, when $A\subset \{(z_1,z'): |z'|^2  < C(1+|z_1|^2)\}$
where $z'=(z_2,...,z_{N})$ and $C>0$, recall for $K:=\{z\in A: |z_1|\leq r\}$ we have $V_K(z)=\max[0,\log |z_1|/r]$. Hence 
$$\rho_K(\lambda_j)=\limsup_{|z_1|\to \infty, \ z \in A, \ z/z_1\to {\bf \lambda_j}}[\log |z_1|/r-\log |z_1|]=-\log r$$
for $j=1,...,d.$
\end{example}

Our first goal is to prove the following result.

\begin{theorem} \label{mainth} Let $A$ be an irreducible algebraic curve in $\C^N$ and let $K\subset A$ be nonpolar. Let $u\in L(A)$ with 
$u\leq 0$ on $K$ and suppose $\rho_{u(j)}= \rho_K(\lambda_j), \ j=1,...,d$. Then $u=V_K^*$ on $A^0\setminus K$.
\end{theorem}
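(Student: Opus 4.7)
The strategy is to study $w := V_K^* - u$ on $A^0 \setminus K$ and prove $w \equiv 0$. The upper bound $u \leq V_K^*$ on $A^0$ is automatic: $u$ is a competitor in the sup defining $V_{K,A}$, so $u \leq V_{K,A} = V_K$ on $A^0$ by Sadullaev's theorem, and upper-regularizing gives $u \leq V_K^*$. Hence $w \geq 0$ on $A^0$. On $A^0 \setminus K$ the function $V_K^*$ is harmonic while $u$ is subharmonic, so $w$ is \emph{superharmonic} there. The key observation is that since $V_K^* \in L^+(A)$ is harmonic in a neighborhood of each end, its Robin constant is attained as a genuine limit: $V_K^*(z) = \log|z| + \rho_K(\lambda_j) + o(1)$ as $z \to \infty$ along $A(j)$. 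Combining with the hypothesis $\limsup_{z \to \infty,\, z \in A(j)}(u(z) - \log|z|) = \rho_{u(j)} = \rho_K(\lambda_j)$ yields
\[
\liminf_{z \to \infty,\, z \in A(j)} w(z) \;=\; \rho_K(\lambda_j) - \rho_{u(j)} \;=\; 0 \qquad (j = 1, \dots, d).
\]

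The decisive local step is to convert the boundary-at-infinity into an interior puncture. By Proposition \ref{hmstuff}, on each branch $A(j) = \{(z_1, s_j(z_1)) : |z_1| > R\}$ the coordinate $\zeta := 1/z_1$ realises the end as a punctured disk around $\zeta = 0$. Pulled back via this chart, $w$ is nonnegative and superharmonic on the punctured disk, with $\liminf_{\zeta \to 0} w = 0$. The classical removable-singularity theorem for superharmonic functions bounded below then produces a superharmonic extension $\tilde w$ to the full disk with $\tilde w(0) = 0$. Since $\tilde w \geq 0$ attains its minimum at the interior point $\zeta = 0$, the strong minimum principle forces $\tilde w \equiv 0$ on the disk. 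Hence $w \equiv 0$ on a deleted neighborhood of $\infty$ in each branch $A(j)$; collecting these across $j$, we obtain $w \equiv 0$ on $A \setminus \bar B$ for $B$ chosen large enough to contain $K$.

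It remains to propagate $w \equiv 0$ globally. The open set $A \setminus \bar B$ lies in $A^0 \setminus K$; since $w \geq 0$ is superharmonic on $A^0 \setminus K$ and vanishes identically on $A \setminus \bar B$, the strong minimum principle gives $w \equiv 0$ on every connected component of $A^0 \setminus K$ that meets $A \setminus \bar B$, i.e.~on every component containing one of the ends of $A$. The main obstacle I anticipate is the treatment of any remaining components of $A^0 \setminus K$ that are relatively compact in $A^0$ (``bounded'' components with $\partial \Omega \subset K$): the Robin-constant hypothesis is silent on them, so a separate argument is needed. Here one can use that $V_K^* = 0$ quasi-everywhere on $\partial \Omega \subset K$ while $u \leq 0$ there, combined with the regularity of $K$ and a maximum-principle comparison, to force $u = V_K^* = 0$ on $\Omega$ as well, completing the proof that $u = V_K^*$ on $A^0 \setminus K$.
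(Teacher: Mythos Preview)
Your approach is genuinely different from the paper's and, on the components of $A^0\setminus K$ that contain an end of $A$, it is correct and complete.  The paper proves the result via a Stokes-type identity
\[
\int_A\bigl(u\,dd^cv - v\,dd^cu\bigr)=\sum_{j=1}^d\bigl(\rho_{u(j)}-\rho_{v(j)}\bigr)
\]
combined with a Bedford--Taylor comparison principle: one replaces $u$ by $v:=\max[u,0,\log|z|-c]\in L^+(A)$, uses the identity to show that $dd^cv$ is carried by $\{V_K^*=0\}$, and then invokes the comparison theorem to get $V_K^*\leq v$ globally, whence $V_K^*=u$ off $K$.  You instead uniformize each end by $\zeta=1/z_1$, observe that $w=V_K^*-u\geq 0$ is superharmonic on the resulting punctured disk with $\liminf_{\zeta\to 0}w=0$, extend across the puncture, and apply the strong minimum principle.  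This is precisely the ``elementary'' argument that the paper's own Remark (following the planar version of the theorem) dismisses as ``not one that will easily generalize to the algebraic curve situation''; you have noticed that Proposition~\ref{hmstuff}(4) supplies exactly the local chart needed at each end, so the Kelvin-transform idea does survive.  Your route avoids both the integral identity and the comparison theorem; the paper's route is more global and does not require a component-by-component minimum-principle argument.

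Your treatment of relatively compact components $\Omega$ of $A^0\setminus K$, however, does not work as written.  From $V_K^*=0$ q.e.\ on $\partial\Omega\subset K$ and $u\leq 0$ there, the maximum principle yields only $u\leq 0=V_K^*$ on $\Omega$, not equality; the Robin hypothesis is blind to such components, and in fact equality can fail (already for $A=\C$ with $K=\{1\leq|z|\leq 2\}$ and $u(z)=\log(|z|/2)$, one has $u<0=V_K^*$ on $\{|z|<1\}$ while all hypotheses hold).  The paper's own proof is equally silent here---its assertion ``$V_K^*>0$ outside of $K$'' fails on bounded components---so this is a wrinkle in the statement itself rather than a defect peculiar to your method.
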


In the next section, we give a proof of this result in $\C$ which can be modified to prove the actual result; that proof is then given in section 3. 
Using Theorem \ref{mainth}, Theorem 2.1 of \cite{B98} generalizes to the algebraic curve setting:

\begin{theorem} \label{b98} Let $A$ be an irreducible algebraic curve in $\C^N$ and let $K\subset A$ be nonpolar. Let 
$\{p_n\}$ be a sequence of polynomials with deg$p_n=n$ satisfying $\limsup_{n\to \infty} ||p_n||_K^{1/n}=1$ such that, letting $\hat p_n$ 
be the degree $n$ homogeneous piece of $p_n$,  the function 
$$\hat u(z):=\bigl(\limsup_{n\to \infty}\frac{1}{n}\log |\hat p_n(z)|\bigr)^*\in L(A)$$
satisfies $\rho_{\hat u(j)}= \rho_K(\lambda_j), \ j=1,...,d$. Then 
$$u(z):=\bigl(\limsup_{n\to \infty}\frac{1}{n}\log |p_n(z)|\bigr)^*=V_K^*(z)$$ 
for $z\in A^0\setminus K$.

\end{theorem}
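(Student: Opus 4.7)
The plan is to apply Theorem \ref{mainth} to $u$: the conclusion $u = V_K^*$ on $A^0 \setminus K$ follows once we verify (i) $u \in L(A)$, (ii) $u \leq 0$ on $K$, and (iii) $\rho_{u(j)} = \rho_K(\lambda_j)$ for each $j$. For (i) and the upper bound $\rho_{u(j)} \leq \rho_K(\lambda_j)$ in (iii), I would apply the Bernstein--Walsh inequality on $A$: $|p_n(z)| \leq ||p_n||_K \exp(n V_K^*(z))$, so $v_n := \tfrac{1}{n}\log|p_n| \leq \tfrac{1}{n}\log ||p_n||_K + V_K^*$. Combined with $\limsup_n ||p_n||_K^{1/n} \leq 1$ and usc-regularization, this gives $u \leq V_K^*$, hence $u \in L(A)$. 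For (ii), on $z \in K$ the bound $v_n(z) \leq \tfrac{1}{n}\log ||p_n||_K \to 0$ gives $u \leq 0$ on $K$ off a polar subset $E$; since $V_K^* = V_{K\setminus E}^*$, one may apply Theorem \ref{mainth} to $K \setminus E$.

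The core step is the reverse Robin inequality $\rho_{u(j)} \geq \rho_K(\lambda_j)$. The key link is formula (\ref{robpoly}): for each $n$, the functions $v_n$ and $\hat v_n := \tfrac{1}{n}\log|\hat p_n|$ have the \emph{same} Robin constant $\tfrac{1}{n}\log|\hat p_n(\lambda_j)|$ on $A(j)$. Passing to the coordinate $w = 1/z_1$ near infinity on $A(j)$, the quantities $\tilde p_n(w) := w^n p_n(z(w))$ and $\tilde{\hat p}_n(w) := w^n \hat p_n(z(w))$ extend holomorphically across $w = 0$ with $\tilde p_n(0) = \tilde{\hat p}_n(0) = \hat p_n(\lambda_j)$; the corresponding shm functions $\tilde v_n := \tfrac{1}{n}\log|\tilde p_n|$ and $\tilde{\hat v}_n := \tfrac{1}{n}\log|\tilde{\hat p}_n|$ have usc-regularized $\limsup$'s $\tilde u := (\limsup_n \tilde v_n)^*$ and $\tilde{\hat u}$ satisfying $\tilde u(0) = \rho_{u(j)}$ and $\tilde{\hat u}(0) = \rho_{\hat u(j)} = \rho_K(\lambda_j)$ (the last by hypothesis). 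Factoring $\tilde p_n(w) = \tilde{\hat p}_n(w)\cdot[1 + w\,\Phi_n(w)]$, where $\Phi_n$ encodes the lower-degree homogeneous parts of $p_n$, yields $\tilde v_n - \tilde{\hat v}_n = \tfrac{1}{n}\log|1 + w\,\Phi_n(w)|$. The goal is to show this correction contributes nothing after $\limsup$ and usc-regularization, giving $\tilde u(0) = \tilde{\hat u}(0)$ and hence $\rho_{u(j)} = \rho_K(\lambda_j)$.

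The main obstacle is precisely this last step: the coefficients of $\Phi_n$ can in principle grow with $n$, so the termwise smallness of the $O(w/n)$-correction is not enough, and one needs a uniform bound on $\Phi_n$ over a fixed punctured neighborhood of $w = 0$---obtainable by Bernstein--Walsh-type estimates on the Laurent coefficients of $p_n$ restricted to $A(j)$---followed by a Hartogs-type exchange of $\limsup$'s to pass the bound to the usc-regularizations. This is the branch-by-branch adaptation of the planar argument announced for Section~2 of the paper. Once (iii) is in hand, Theorem \ref{mainth} applied to $K\setminus E$ yields $u = V_K^*$ on $A^0 \setminus K$, as desired.
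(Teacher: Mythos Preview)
Your overall reduction to Theorem~\ref{mainth}, and the verification of $u\in L(A)$, $u\leq 0$ on $K$, and the upper bound $\rho_{u(j)}\leq\rho_K(\lambda_j)$ via Bernstein--Walsh, all match the paper. The difference lies entirely in how the reverse inequality $\rho_{u(j)}\geq\rho_{\hat u(j)}$ is obtained.

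The paper avoids your factorization and the obstacle you flag. It uses the elementary fact that for a function $v$ subharmonic on an exterior disk with $v-\log|z_1|$ bounded above, one has
\[
\limsup_{|z_1|\to\infty}[v(z_1)-\log|z_1|]\ =\ \inf_{R}\Bigl[\max_{|z_1|=R}v(z_1)-\log R\Bigr].
\]
Applied to $v_n(z_1):=\tfrac{1}{n}\log|p_n(z_1,s_j(z_1))|$ for each fixed $n$, this gives, via (\ref{robpoly}),
\[
\tfrac{1}{n}\log|\hat p_n(\lambda_j)|\ \leq\ \max_{|z_1|=R}v_n(z_1)-\log R\qquad\text{for every }R.
\]
Taking $\limsup_n$ and invoking Hartogs' lemma on the compact circle $\{|z_1|=R\}$ replaces $\max v_n$ by $\max u$; then letting $R\to\infty$ and applying the same identity to $u$ recovers $\rho_{u(j)}$ on the right. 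No estimate on lower-order terms of $p_n$ is needed.

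Your factorization $\tilde p_n=\tilde{\hat p}_n\cdot[1+w\Phi_n]$ is more fragile than you indicate: $\Phi_n$ is only meromorphic, with poles at the zeros of $\tilde{\hat p}_n$, and these can cluster toward $w=0$ as $n$ grows, so a uniform bound on a fixed punctured disk is not directly available from Bernstein--Walsh. In fact the detour is unnecessary. Once you know the $\tilde v_n$ extend subharmonically across $w=0$ and are locally uniformly bounded above (which your Bernstein--Walsh bound already gives), the inequality
\[
\tilde u(0)=(\limsup_n\tilde v_n)^*(0)\ \geq\ \limsup_n\tilde v_n(0)\ =\ \limsup_n\tfrac{1}{n}\log|\hat p_n(\lambda_j)|
\]
follows immediately from subharmonicity, and this is exactly the inequality the paper obtains by its circle-max/Hartogs argument. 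So the machinery you set up already contains the needed step; the $\Phi_n$ analysis is a red herring.
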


The main goal of this paper is to give some substance to these general results by specializing to compact, nonpolar subsets $K$ of an algebraic curve $A$ in $\C^2$. In section 4, following \cite{HM}, we define classes of Chebyshev polynomials associated to the directions $\lambda_j, \ j=1,...,d$; the corresponding Chebyshev constants $T(K,\lambda_j), \ j=1,...,d$ are related to the Robin constants $\rho_K(\lambda_j)$ of $V_K$. We also consider classes of Chebyshev polynomials associated to a ``standard'' ordering of a monomial basis for $\C[A]$ to obtain Chebyshev constants $T(K,\calZ(j)), \ j=0,...,d-1$. The goal of section 4 is the following result: 

\begin{theorem} \label{chebthm} Suppose the directions $\lambda_1,\ldots,\lambda_d$ are labelled so that 
$$T(K,\lambda_1)\geq T(K,\lambda_2) \geq\cdots\geq T(K,\lambda_d) .$$ Then  
$T(K,\calZ(k-1))=T(K,\lambda_{k})$ for each $k\in\{1,\ldots,d\}$.  
\end{theorem}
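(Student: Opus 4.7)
The plan is to prove the two inequalities $T(K,\calZ(k-1))\le T(K,\lambda_k)$ and $T(K,\calZ(k-1))\ge T(K,\lambda_k)$ separately for each $k\in\{1,\ldots,d\}$. The central tool is that the $d$ distinct directions $\lambda_1,\ldots,\lambda_d$ yield a Vandermonde-invertible change of basis between the ``standard monomial leading term'' representation (used to define $T(K,\calZ(\cdot))$) and the ``directional leading form'' representation (used to define $T(K,\lambda_{\cdot})$) of the top-degree homogeneous piece of a polynomial in $\C[A]$, via the identity \eqref{robpoly}.

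For the upper bound, I would take, for each $j\in\{k,\ldots,d\}$ and each large $n$, a near-optimal directional polynomial $p_{n,j}$ of degree $n$ whose leading form vanishes in the directions $\lambda_1,\ldots,\lambda_{j-1}$ and is normalized at $\lambda_j$, with $||p_{n,j}||_K^{1/n}\to T(K,\lambda_j)$ (such polynomials are available from the construction of directional Chebyshev constants in \cite{HM}). A suitable linear combination $q_n=\sum_{j\ge k}c_{j,n}\,p_{n,j}$ then has its leading form supported on directions $\lambda_k,\ldots,\lambda_d$ and is monic in the $\calZ(k-1)$ sense; the coefficients $c_{j,n}$ are uniformly bounded because the relevant Vandermonde matrix is fixed and invertible. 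Invoking the sorting hypothesis $T(K,\lambda_j)\le T(K,\lambda_k)$ for $j\ge k$ gives $\limsup_n ||q_n||_K^{1/n}\le T(K,\lambda_k)$, hence $T(K,\calZ(k-1))\le T(K,\lambda_k)$.

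For the lower bound, I would take a near-optimal sequence $\{q_n\}$ for $T(K,\calZ(k-1))$ and analyze its leading form $\hat q_n$ evaluated at each direction $\lambda_j$. The $\calZ(k-1)$-monic normalization, translated through the inverse Vandermonde map, forces a lower bound on $\max_{1\le m\le k}|\hat q_n(\lambda_m)|^{1/n}$ as $n\to\infty$. For an $m$ achieving this maximum, the directional Chebyshev inequality for $\lambda_m$ then bounds $||q_n||_K^{1/n}$ below, up to a factor tending to $1$, by $T(K,\lambda_m)$, which is at least $T(K,\lambda_k)$ by the sorting. This yields $T(K,\calZ(k-1))\ge T(K,\lambda_k)$.

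The main obstacle I anticipate is the pigeonhole step in the lower bound: one must argue quantitatively that the $\calZ(k-1)$-monic constraint, when rewritten in the directional basis, cannot concentrate all of its asymptotic mass on the ``lower'' directions $\lambda_{k+1},\ldots,\lambda_d$. This amounts to a uniform estimate for the inverse Vandermonde matrix applied to a specific indicator vector of the monomial-ordering class, and it must be done on the exponential scale $1/n$. Additional care is needed in the case where some of the $T(K,\lambda_j)$ coincide, since one must verify that the choice of labelling within a cluster of equal values does not affect the conclusion; once these combinatorial issues are handled, the analytic content reduces to statements already available for directional Chebyshev constants in \cite{HM}.
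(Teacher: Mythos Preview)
Your approach is correct and, in fact, considerably more direct than the paper's. The upper bound $\overline T(K,\calZ(k-1))\le T(K,\lambda_k)$ you sketch is essentially the content of Proposition~\ref{prop:TZ}: the paper obtains it by multiplying by $\Pi^{(k-1)}=\prod_{j<k}(z_2-\lambda_jz_1)$ rather than by forming a linear combination of directional Chebyshev polynomials, but the two constructions produce the same leading form (your fixed $c_{j,n}$ turn out to be $\prod_{i<k}(\lambda_j-\lambda_i)$). The real difference is in the lower bound. The paper does \emph{not} argue pointwise: it first invokes the transfinite diameter identity $d_{\calS}(K)=d_{\calC}(K)$ from \cite{M} to conclude that $\prod_{k}\overline T(K,\calZ(k-1))=\prod_k T(K,\lambda_k)$, which together with the termwise upper bound forces $\overline T(K,\calZ(k-1))=T(K,\lambda_k)$; it then spends Propositions~\ref{prop:13}--\ref{prop:infT=supT} on a separate, fairly delicate, argument to show the limits $T(K,\calZ(k))$ actually exist. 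Your Vandermonde pigeonhole yields $\underline T(K,\calZ(k-1))\ge T(K,\lambda_k)$ directly, so existence of the limit and its value fall out simultaneously from the sandwich $\underline T\le\overline T$, with no appeal to transfinite diameter.

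Two remarks on your anticipated obstacles. First, the ``exponential scale'' worry dissolves once you note that for $q_n\in\calZ(k-1)$ the leading form satisfies $\hat q_n(1,\lambda)=\lambda^{k-1}+(\hbox{lower powers})$, a \emph{monic} polynomial of degree $k-1$ in $\lambda$; a monic degree-$(k-1)$ polynomial cannot be uniformly small at $k$ distinct nodes $\lambda_1,\dots,\lambda_k$, so $\max_{m\le k}|\hat q_n(1,\lambda_m)|\ge c>0$ with $c$ independent of $n$, and $c^{1/n}\to 1$. Feeding this into Corollary~\ref{polyprop} and Corollary~\ref{cor:oldT} gives the lower bound cleanly. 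Second, the case of ties among the $T(K,\lambda_j)$ needs no extra care: your argument uses only the distinctness of the directions $\lambda_j$ themselves and the sorted inequalities, not strict inequalities between the Chebyshev constants.
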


\noindent We mention that a key part of the proof involves showing that the limit in the definition of $T(K,\calZ(k))$ exists; here we utilize notions of transfinite diameter for $K$.

This leads, in section 5, to definitions of two families of $d$ extremal-like functions in $L(A)$, denoted $V_{K}^{(k)}, k=1,...,d$, defined in (\ref{vkk}), and $\tilde V_{K}^{(k)}, k=0,...,d-1$, defined in (\ref{vkkt}). With the aid of Theorem \ref{b98}, we prove the following result:

\begin{theorem} For $K\subset A\subset \C^2$ nonpolar, we have
$$\max [V_{K}^{(1)}(z_1,z_2),...,V_{K}^{(d)}(z_1,z_2)]= V_K^*(z_1,z_2)  \ \hbox{on} \ A^0 \setminus K.$$
Furthermore, if  
$$\rho_K(\lambda_1) < \rho_K(\lambda_2) < \cdots < \rho_K(\lambda_d),$$
we have
$$\max [\tilde V_{K}^{(0)}(z_1,z_2),...,\tilde V_{K}^{(d-1)}(z_1,z_2)]= V_K^*(z_1,z_2)  \ \hbox{on} \ A^0 \setminus K.$$

\end{theorem}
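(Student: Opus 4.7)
The plan is to apply Theorem~\ref{mainth} to the function $v := \max_{k=1,\dots,d} V_K^{(k)}$ (and, in the second case, to $\tilde v := \max_{k=0,\dots,d-1} \tilde V_K^{(k)}$). Each $V_K^{(k)}$ is usc, lies in $L(A)$, is dominated by $V_K^*$, and satisfies $V_K^{(k)}\leq 0$ on $K$ since the polynomials defining it are normalized by their sup-norm on $K$; these four properties pass to the finite max $v$. By Theorem~\ref{mainth} it will then suffice to verify $\rho_{v(j)} = \rho_K(\lambda_j)$ for each $j \in \{1,\dots,d\}$.

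The crux is the identity $\rho_{V_K^{(k)}(k)} = \rho_K(\lambda_k)$ for every $k$. The function $V_K^{(k)}$ in (\ref{vkk}) is the usc regularization of $\limsup_n (1/n)\log |t_n^{(k)}|$ where $t_n^{(k)}$ is an appropriately normalized Chebyshev polynomial aimed at direction $\lambda_k$; applying formula (\ref{robpoly}) to the top-degree homogeneous parts $\hat t_n^{(k)}$, together with the identification of the directional Chebyshev constant $T(K,\lambda_k)$ with the directional Robin constant $\rho_K(\lambda_k)$ from \cite{HM}, yields the desired equality along $\lambda_k$. Since $v \geq V_K^{(j)}$, we get $\rho_{v(j)} \geq \rho_{V_K^{(j)}(j)} = \rho_K(\lambda_j)$, while $v \leq V_K^*$ gives the reverse inequality, so equality holds in every direction. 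Theorem~\ref{mainth} then produces $v = V_K^*$ on $A^0\setminus K$. (One could instead try to invoke Theorem~\ref{b98} directly, by interleaving sequences of Chebyshev polynomials for different directions, but routing through Theorem~\ref{mainth} avoids choosing such an interleaving.)

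For the second statement, the strict inequality $\rho_K(\lambda_1) < \cdots < \rho_K(\lambda_d)$ is equivalent, via the $T$-versus-$\rho$ correspondence, to the strict ordering $T(K,\lambda_1) > \cdots > T(K,\lambda_d)$. Theorem~\ref{chebthm} then gives $T(K,\calZ(k-1)) = T(K,\lambda_k)$ for $k=1,\dots,d$. Tracing this identification through the definition of $\tilde V_K^{(k-1)}$ in (\ref{vkkt}), the top-degree homogeneous parts of the standard-ordering Chebyshev polynomials will behave at infinity like those of the Chebyshev polynomials for direction $\lambda_k$, so $\rho_{\tilde V_K^{(k-1)}(k)} = \rho_K(\lambda_k)$, and the same max argument applies. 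The main obstacle is precisely this last identification: standard-ordering Chebyshev polynomials are not a priori designed around a single direction, so one must argue, using the strict separation of the Robin constants, that the minimal-sup-norm polynomial of given rank in the standard ordering is forced to have its degree-$n$ homogeneous part concentrated on $\lambda_k$. Strict inequality is essential here — with coincidences among the $\rho_K(\lambda_j)$ the pairing between $\tilde V_K^{(k-1)}$ and a specific direction $\lambda_k$ becomes ambiguous.
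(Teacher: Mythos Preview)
Your plan for the first statement is correct and is essentially the paper's argument, with one cosmetic difference: the paper cites Theorem~\ref{b98} while you route through Theorem~\ref{mainth} applied to $v=\max_k V_K^{(k)}$. Since $v$ is not literally of the form $(\limsup_n \tfrac{1}{n}\log|p_n|)^*$ for a single sequence $\{p_n\}$, your choice avoids the interleaving you mention. Either way the substance is the computation $\rho_{V_K^{(k)}}(\lambda_k)=\rho_K(\lambda_k)$, and here the paper does exactly what you describe: $\hat t_n^{(k)}=z_1^{j}\bv_k^{l}$ gives $\hat t_n^{(k)}(1,\lambda_k)=1$, so the Robin constant along $\lambda_k$ is $-\log T(K,\lambda_k)=\rho_K(\lambda_k)$ by Corollary~\ref{cor:oldT}. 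One small point you gloss over: passing from the Robin constants of the individual $\tfrac{1}{n}\log|t_n^{(k)}|$ to that of their $\limsup^*$ requires the Hartogs-type inequality proved in the paper's argument for Theorem~\ref{b98}; this is implicit in your appeal to~(\ref{robpoly}).

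For the second statement, you correctly isolate the obstruction but stop short of resolving it. The equality $T(K,\calZ(k))=T(K,\lambda_{k+1})$ only controls $\|\tilde t_n^{(k)}\|_K^{1/n}$; it says nothing directly about $\hat{\tilde t}_n^{(k)}(1,\lambda_j)$, which is what enters the Robin computation. The paper's mechanism is this: write $\hat{\tilde t}_n^{(k)}(1,\lambda)=\prod_{i=1}^k(\lambda-r_{i,n})$, a monic degree-$k$ polynomial in $\lambda$. From $\tilde V_K^{(k)}\le V_K^*$ and the strict inequalities $\rho_K(\lambda_j)<\rho_K(\lambda_{k+1})$ for $j\le k$, one forces
\[
\limsup_{n\to\infty}\tfrac{1}{n+k}\log|\hat{\tilde t}_n^{(k)}(1,\lambda_j)|\le \rho_K(\lambda_j)-\rho_K(\lambda_{k+1})<0,\qquad j=1,\dots,k,
\]
so along successive subsequences the $k$ roots $r_{1,n},\dots,r_{k,n}$ must converge to $\lambda_1,\dots,\lambda_k$. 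Since the $\lambda_j$ are distinct, this gives $\hat{\tilde t}_n^{(k)}(1,\lambda_j)\to\prod_{i\le k}(\lambda_j-\lambda_i)\neq 0$ for $j\ge k+1$ along that subsequence, hence $\limsup_n \tfrac{1}{n+k}\log|\hat{\tilde t}_n^{(k)}(1,\lambda_j)|\ge 0$ and therefore $\rho_{\tilde V_K^{(k)}}(\lambda_j)\ge\rho_K(\lambda_{k+1})$ for all $j\ge k+1$; in particular equality at $j=k+1$. This root-matching step is exactly where the strict inequalities are consumed, and it is the missing ingredient in your sketch. Once you have $\rho_{\tilde V_K^{(k)}}(\lambda_{k+1})=\rho_K(\lambda_{k+1})$ for each $k$, your max argument via Theorem~\ref{mainth} goes through verbatim.
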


We remark that these last two results can be generalized to $\C^N, \ N>2$ but the proofs (and notation) would be cumbersome.  At the end of section 5, we give specific examples to illustrate these results. The bulk of the work in this paper is section 4 where we relate the various Chebyshev constants arising from ordering bases of $\C[A]$ in order to verify the appropriate properties of the extremal-like functions $V_{K}^{(k)}, k=1,...,d$ and $\tilde V_{K}^{(k)}, k=0,...,d-1$.

\section{Proof of Theorem \ref{mainth} in $\C$} Theorem \ref{mainth} is a generalization of a classical potential theoretic result in the plane:

\begin{theorem} \label{inplane} Let $K\subset \C$ be compact and nonpolar with $\C \setminus K$ connected, and let $u\in L(\C)$ satisfy $u\leq 0$ on $K$. Suppose $\rho_u:=\limsup_{|z|\to \infty}[u(z)-\log |z|]$ equals $\rho_K:=\rho_{V_K^*}$. Then $u=V_K^*$ on $\C \setminus K$.

\end{theorem}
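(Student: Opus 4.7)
The plan is to set $h := u - V_K^*$ on $\C\setminus K$, show it is subharmonic and $\leq 0$ there with $\limsup h(z) = 0$ at infinity, extend it across $\infty$ to $\hat\C\setminus K$, and then conclude via the strong maximum principle.

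First I would observe that since $u\in L(\C)$ satisfies $u\leq 0$ on $K$, $u$ is a competitor in the supremum defining $V_K$, giving $u\leq V_K\leq V_K^*$ on $\C$, so $h\leq 0$. Because $K$ is nonpolar with $\C\setminus K$ connected, $V_K^*$ coincides with the Green function $g_K$ of $\C\setminus K$ with pole at $\infty$ and is, in particular, harmonic on $\C\setminus K$; hence $h$ is subharmonic there. The asymptotics $V_K^*(z) = \log|z|+\rho_K+o(1)$ at $\infty$, together with the hypothesis $\rho_u = \rho_K$, yield
\[
\limsup_{|z|\to\infty} h(z) = \limsup_{|z|\to\infty}[u(z)-\log|z|]-\rho_K = \rho_u-\rho_K = 0.
\]

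Next, I would extend $h$ to a subharmonic function on the open subset $\hat\C\setminus K$ of the Riemann sphere. Fixing $R$ with $K\subset\{|z|\leq R\}$, the function $H(w) := h(1/w)$ is subharmonic and bounded above by $0$ on the punctured disk $\{0<|w|<1/R\}$. By the removable singularity theorem for subharmonic functions locally bounded above, $H$ extends to a subharmonic function on $\{|w|<1/R\}$, with $H(0) = \limsup_{w\to 0}H(w) = 0$. Pulling this back yields a subharmonic extension of $h$ to $\hat\C\setminus K$ satisfying $h(\infty)=0$.

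Finally, $\hat\C\setminus K$ is connected (since $\C\setminus K$ is connected and unbounded), the extended $h$ is $\leq 0$, and its supremum $0$ is attained at the interior point $\infty$. The strong maximum principle for subharmonic functions on a connected open set forces $h\equiv 0$, proving $u = V_K^*$ on $\C\setminus K$. The only delicate step is the extension of $h$ across the point at infinity, and I expect this to be the main obstacle in generalizing the argument to algebraic curves in Section 3: there the curve $A$ has $d$ distinct branches $A(j)$ going to infinity, and an analogous extension must be carried out locally along each branch, with the $d$ matching conditions $\rho_{u(j)} = \rho_K(\lambda_j)$ simultaneously ensuring that each local extension takes the correct value.
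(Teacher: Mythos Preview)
Your argument is correct and essentially coincides with the ``elementary proof'' the paper records in the Remark immediately following the theorem (attributed there to Franck Wielonsky): form $h=u-V_K^*$, note it is subharmonic, nonpositive, with $\limsup_{|z|\to\infty}h(z)=0$, extend across $\infty$ by the removable singularity theorem, and invoke the maximum principle.

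The paper, however, deliberately presents a \emph{second} proof as its main one, and this differs substantially from yours. There one first replaces $u$ by $v:=\max[u,0,\log|z|-c]\in L^+(\C)$, then uses the integral identity $\int_{\C}[V_K^*\,dd^cv - v\,dd^cV_K^*]=\rho_{V_K^*}-\rho_v=0$ (Lemma~\ref{flatclemma}) together with $v=0$ on $K$ and $V_K^*>0$ off $K$ to conclude that $dd^cv$ is supported on $K$; finally a generalized comparison theorem (Proposition~\ref{compthm}) gives $V_K^*\leq v$, hence $V_K^*=u$ on $\C\setminus K$. The point of this more elaborate route is precisely that it transports to the algebraic-curve setting of Section~3: the integral identity becomes $\int_A[u\,dd^cv-v\,dd^cu]=\sum_{j=1}^d[\rho_{u(j)}-\rho_{v(j)}]$, summing the contributions of the $d$ branches at infinity, and the comparison theorem goes through with only minor changes. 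Your closing speculation---carrying out a local extension across each of the $d$ points at infinity and applying a maximum principle---is in the right spirit, but the paper explicitly flags that this Kelvin-transform approach ``will not easily generalize''; the difficulty is that on $A$ one would need a single connected domain on the compactification in which the maximum is attained at all $d$ points simultaneously, and the topology of $A$ need not cooperate.
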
 

\begin{remark} There is an elementary proof of this fact, but not one that will easily generalize to the  algebraic curve situation. Namely, the function $w:=u-V_K^*$ is shm and nonpositive on $\C\setminus K$ with $\limsup_{|z|\to \infty} w(z)=0$. Considering the Kelvin transform $\tilde w(z) = w(z/|z|^2)$ gives us a shm and nonpositive function on $D\setminus \{0\}$ where $D$ is a bounded domain containing the origin. Since $\tilde w$ is bounded near the origin $0$, it extends across $0$ via $\tilde w(0):=\limsup_{z\to 0} \tilde w(z) \leq 0$ as a shm function on all of $D$. But $\limsup_{|z|\to \infty} w(z)=0$ implies, in fact, that $\tilde w(0)=0$ which contradicts the maximum principle unless $\tilde w\equiv 0$ in $D$, i.e., $w\equiv 0$ in $\C\setminus K$. We thank Franck Wielonsky for this observation.
\end{remark}

We consider shm functions $u \in L^+(\C)$ where
$$L^+(\C)=\{u\in SH(\C): \log^+|z|-c_u \leq u(z) \leq \log^+|z|+c_u\}$$
where $c_u$ is a constant depending on $u$. We will need the following result for such functions on the way to a proof of Theorem \ref{inplane} which will generalize to algebraic curves.  

\begin{proposition} \label{robinprop} Let $u\in L^+(\C)$. Then
$$\rho_u:=\limsup_{|z|\to \infty}[u(z)-\log |z|] =\lim_{R\to \infty}  [\frac{1}{2\pi}\int_0^{2\pi}u(Re^{i\theta})d\theta -log R].$$
\end{proposition}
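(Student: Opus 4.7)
The plan is to prove that the right-hand side limit exists by a convexity argument, and then identify it with $\rho_u$ via a Kelvin-type inversion that mirrors the trick used in the preceding Remark.

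For existence of the limit, I would invoke the classical fact that for a subharmonic function $u$ on $\C$, the circular mean $M_u(R) := \frac{1}{2\pi}\int_0^{2\pi}u(Re^{i\theta})\,d\theta$ is convex as a function of $\log R$. The $L^+$ bounds $\log^+|z|-c_u\leq u(z)\leq \log^+|z|+c_u$ integrate to $|M_u(R)-\log R|\leq c_u$ for $R\geq 1$. Writing $\psi(t) := M_u(e^t)-t$ then gives a convex function on $[0,\infty)$ that is bounded, and any such function is nonincreasing (a positive right-derivative at any point would, by convexity, force linear growth and hence unboundedness). So $\psi$ decreases to a finite limit $L \in \R$ as $t\to\infty$. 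The easy inequality $M_u(R)\leq \max_{|z|=R}u(z)$ immediately gives $L\leq \rho_u$.

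For the reverse inequality I would invert the plane. Define $v(w):=u(1/w)+\log|w|$ for $w\in\C\setminus\{0\}$. Since $u(1/w)$ is subharmonic away from $w=0$ as the composition of a subharmonic function with a holomorphic map, and $\log|w|$ is harmonic there, $v$ is subharmonic on $\C\setminus\{0\}$. The $L^+$ bounds show $|v(w)|\leq c_u$ for $0<|w|\leq 1$, so the standard removable-singularity result for bounded subharmonic functions produces a subharmonic extension $\tilde v$ to all of $\D=\{|w|<1\}$, with
$$\tilde v(0)=\limsup_{w\to 0}v(w)=\limsup_{w\to 0}\bigl[u(1/w)+\log|w|\bigr]=\limsup_{|z|\to\infty}\bigl[u(z)-\log|z|\bigr]=\rho_u.$$
Applying the sub-mean value inequality at the origin at radius $\epsilon\in(0,1)$ and substituting $R=1/\epsilon$ yields
$$\rho_u=\tilde v(0)\leq \frac{1}{2\pi}\int_0^{2\pi}v(\epsilon e^{i\phi})\,d\phi = M_u(R)-\log R.$$
Letting $R\to\infty$ gives $\rho_u\leq L$, and combined with the reverse inequality this proves equality.

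The main obstacle will be the removable-singularity step: one must carefully verify that the bounded subharmonic function $v$ on the punctured unit disk extends to a subharmonic function on the full disk whose value at the origin is precisely the limsup $\rho_u$, since it is this identification that lets the sub-mean inequality deliver the non-trivial direction. The convexity argument for existence of the limit and the elementary direction $L\leq \rho_u$ are both essentially routine once that structural fact is in place.
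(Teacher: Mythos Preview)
Your argument is correct and shares the paper's core idea: invert the plane to bring $\infty$ to the origin, extend the bounded subharmonic function $u(\cdot)-\log|\cdot|$ (pulled back) across the puncture with value $\rho_u$, and read off the circular means. The paper uses the Kelvin map $z\mapsto z/|z|^2$ rather than your holomorphic $w\mapsto 1/w$, but since circular means over a full circle are invariant under $\theta\mapsto -\theta$ this is immaterial.

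The one genuine difference is that you split the work into two halves: a separate convexity argument for $t\mapsto M_u(e^t)-t$ to establish existence of the limit and the inequality $L\le\rho_u$, and then the inversion plus sub-mean inequality for $\rho_u\le L$. The paper instead quotes the single standard fact that for a subharmonic function $\tilde w$ on a disk, $\lim_{r\to 0^+}\frac{1}{2\pi}\int_0^{2\pi}\tilde w(re^{i\theta})\,d\theta=\tilde w(0)$, which delivers existence and equality in one stroke; your convexity step is therefore correct but redundant. Your version has the virtue of being more self-contained (it makes explicit why the limit exists without appealing to the full mean-value limit), while the paper's is shorter.
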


\begin{proof} Let $w(z):=u(z)-\log |z|$. Then $w$ is shm in $\C \setminus \{0\}$ and bounded near $\infty$. Define $\tilde w(z):=w(z/|z|^2)$. Then $\tilde w$ is shm in a deleted neighborhood of the origin, and is bounded near $\{0\}$. Thus we can extend $\tilde w$ to a shm function in this neighborhood of $\{0\}$ by defining 
$$\tilde w(0):=\limsup_{z\to 0} \tilde w(z)= \rho_u.$$
Then 
$$\rho_u=\tilde w(0) =\lim_{r\to 0^+} \frac{1}{2\pi}\int_0^{2\pi}\tilde w(re^{i\theta})d\theta$$
$$=\lim_{r\to 0^+} \frac{1}{2\pi}\int_0^{2\pi}[u(re^{i\theta}/r^2)-\log 1/r]d\theta= \lim_{R\to \infty}  [\frac{1}{2\pi}\int_0^{2\pi}u(Re^{i\theta})d\theta -log R].$$

\end{proof}

The key fact we need for this second proof of Theorem \ref{inplane} is the following.

\begin{lemma} \label{flatclemma} Let $u,v\in L^+(\C)$. Then 
$$\int_{\C} [udd^cv-vdd^cu]:=\int_{\C} [u \Delta v- v \Delta u]dA = \rho_u -\rho_v.$$
\end{lemma}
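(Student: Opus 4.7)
The plan is to derive the identity from Green's identity applied on disks $D_R := \{|z| < R\}$, then pass to the limit $R \to \infty$. By standard mollification (which keeps $L^+$ membership and converges at the level of both Robin constants and Riesz measures), it suffices to treat $u, v \in C^\infty(\C)$. Green's identity then gives
$$\int_{D_R}[u\,dd^cv - v\,dd^cu] = \int_{\partial D_R}[u\,d^cv - v\,d^cu].$$
On $\partial D_R$ one has $d^c\log|z|=\frac{d\theta}{2\pi}$, so $\int_{\partial D_R} u\,d^c\log|z| = \bar u(R) := \frac{1}{2\pi}\int_0^{2\pi}u(Re^{i\theta})\,d\theta$. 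Substituting $d^cv = d^c\log|z| + d^c(v-\log|z|)$ and symmetrically for $u$, the boundary integral rewrites as
$$\bigl(\bar u(R) - \bar v(R)\bigr) \;+\; \int_{\partial D_R}\bigl[u\,d^c(v-\log|z|) - v\,d^c(u-\log|z|)\bigr].$$

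By Proposition~\ref{robinprop} the first piece tends to $\rho_u - \rho_v$ as $R \to \infty$, so the task reduces to showing the second piece vanishes. Splitting $u = \log R + (u-\log|z|)$ and $v = \log R + (v-\log|z|)$ on $\partial D_R$, and using $\int_{\partial D_R}d^c(v-\log|z|) = \mu_v(D_R) - 1$ (with $\mu_v := dd^cv$ a probability measure since $v\in L^+$), the remainder decomposes into $\log R\cdot[\mu_v(D_R) - \mu_u(D_R)]$ plus integrals pairing the uniformly bounded functions $u-\log|z|$ and $v-\log|z|$ against signed measures $d^c(\cdot - \log|z|)$ of vanishing total integral.

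The main obstacle is to kill both of these pieces when the Riesz measures are not compactly supported. A clean route is to establish the identity first under the assumption that $\mu_u$ and $\mu_v$ have compact support. In that case the Riesz representation yields $u = U^{\mu_u} + \rho_u$ and $v = U^{\mu_v} + \rho_v$, where $U^{\mu}(z) := \int\log|z-w|\,d\mu(w)$: the difference $u - U^{\mu_u}$ is an entire harmonic function of at most logarithmic growth and hence constant by Liouville, with value at infinity equal to $\rho_u$. Fubini and the symmetry $\log|z-w| = \log|w-z|$ then give
$$\int u\,d\mu_v - \int v\,d\mu_u = [I(\mu_u,\mu_v) + \rho_u] - [I(\mu_v,\mu_u) + \rho_v] = \rho_u - \rho_v,$$
where $I(\mu,\nu) := \iint\log|z-w|\,d\mu(z)\,d\nu(w)$. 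For general $u,v\in L^+(\C)$, approximate by truncations with compactly supported Riesz measures (e.g., replace $u$ outside a large disk by the harmonic extension matching $u$ on the boundary and asymptotic to $\log|\cdot| + \rho_u$ at infinity, and similarly for $v$) and pass to the limit, with the $L^+$-bounds furnishing the dominating functions.
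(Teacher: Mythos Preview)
Your compact-support argument via the Riesz representation and Fubini is correct and gives a pleasant alternative viewpoint: the symmetry of the logarithmic energy $I(\mu,\nu)$ makes the identity transparent. The paper takes a different route, reducing by linearity to the single pair $(u,\log^+|z|)$ and computing the boundary term in Green's formula explicitly; that computation leaves a remainder $[1-n_u(R)]\log R$, and the paper invokes \cite{L} (see also \cite{BLM}) to show it tends to zero.

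The gap in your argument is the approximation step, which does \emph{not} bypass this same estimate. When you replace $v$ by a truncation $v_S$ via harmonic extension outside $D_S$, the Riesz measure $dd^cv_S$ acquires a piece $\nu_S$ supported on $\partial D_S$ of total mass $1-\mu_v(D_S)$. Pairing $\nu_S$ against $u$, which equals $\log S+O(1)$ on $\partial D_S$, contributes a term of order $(1-\mu_v(D_S))\log S$; the $L^+$ bounds alone only say this is $o(1)\cdot\log S$, which is insufficient. The phrase ``$L^+$-bounds furnishing the dominating functions'' does not address this tail. Two smaller points also need care: the exterior harmonic extension of $u$ with leading term $\log|z|$ has Robin constant $\bar u(R)-\log R$, not $\rho_u$ (though by Proposition~\ref{robinprop} it converges to $\rho_u$); and subharmonicity of the glued $u_R$ across $\partial D_R$ requires verifying that the extension majorizes $u$ outside $D_R$, which is true but deserves a line. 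In short, your route is sound in principle, but completing the limit passage requires exactly the mass-decay estimate $[1-\mu_u(D_R)]\log R\to 0$ that the paper cites.
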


\noindent Here, $dd^cu=\Delta udA$ where $dA =\frac{i}{2}dz \wedge d\bar z=dx \wedge dy$ and $\Delta u$ are distributional derivatives.

\begin{proof} 

We may assume $u,v$ are smooth. It suffices to prove the result for $u\in L^+(\C)\cap C^2(\C)$ and $u_0(z):=\log^+|z|$ for then we apply the result to $v\in L^+(\C)\cap C^2(\C)$ and $u_0$ and hence to $u,v$. For $R>>1$, letting $B_R:=\{z: |z| <R\}$, we have 
$$\int_{B_R} [udd^cu_0-u_0dd^cu]=\int_{\partial B_R} [ud^cu_0-u_0d^cu]=\int_{\partial B_R} [u \frac{\partial u_0}{\partial n}- u_0\frac{\partial u}{\partial n}]ds$$
$$\int_{\partial B_R} [u /R- \log R \frac{\partial u}{\partial n}]ds=\frac{1}{2\pi} \int_0^{2\pi}[u(Re^{i\theta}) /R- \log R \frac{\partial u}{\partial n}]Rd\theta$$
$$=\frac{1}{2\pi} \int_0^{2\pi} u(Re^{i\theta})d\theta -\log R \int_{\partial B_R} \frac{\partial u}{\partial n}ds$$
$$=\frac{1}{2\pi} \int_0^{2\pi} u(Re^{i\theta})d\theta -\log R \int_{B_R} \Delta u dA.$$
Letting $n_u(R):= \int_{B_R} \Delta u dA$, we have
$$\int_{B_R} [udd^cu_0-u_0dd^cu]= \frac{1}{2\pi} \int_0^{2\pi} u(Re^{i\theta})d\theta -\log R \cdot n_u(R)$$
$$=\frac{1}{2\pi} \int_0^{2\pi} u(Re^{i\theta})d\theta -\log R + [1-n_u(R)]\log R.$$
But by Corollary 1.1 of \cite{L}, for $u\in L^+(\C)$, $\lim_{R\to \infty}[1-n_u(R)]\log R=0$ (or see section 1 of \cite{BLM}, specifically, the proof of Proposition 1.1 and items (i)-(iii) on p. 62). The result follows from Proposition \ref{robinprop}.

\end{proof}

Next, we recall a generalized comparison theorem (stated in the version we will use; the conclusion is true under slightly weaker assumptions). This is Lemma 6.5 in \cite{BT}.

\begin{proposition} \label{compthm} Let $w\in L(\C)$ and $v\in L^+(\C)$. If $w\leq v$ $dd^cv-$a.e., then $w\leq v$ in $\C$.

\end{proposition}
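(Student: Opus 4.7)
My plan is to reduce to the case where both $w$ and $v$ lie in $L^+(\C)$ by monotone approximation, then apply the Bedford--Taylor comparison principle together with a maximum-principle argument to show that the set $S := \{w > v\}$ must be empty.

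To carry out the reduction, for $j \in \N$ I would define $w_j := \max(w, \log^+|z| - j)$. Since $v \in L^+(\C)$ gives $v \geq \log^+|z| - c_v$, for every $j \geq c_v$ the hypothesis $w \leq v$ $dd^c v$-a.e.\ passes to $w_j \leq v$ $dd^c v$-a.e. Moreover $w_j \in L^+(\C)$ and $w_j \downarrow w$ as $j \to \infty$. Thus it suffices to establish the conclusion assuming $w \in L^+(\C)$.

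With $w, v \in L^+(\C)$, the standard Bedford--Taylor comparison principle (applied in the form $\int_{\{u_1<u_2\}} dd^c u_2 \leq \int_{\{u_1<u_2\}} dd^c u_1$ with $u_1=v$, $u_2=w$) gives
$$\int_{\{w > v\}} dd^c w \;\leq\; \int_{\{w > v\}} dd^c v \;=\; 0,$$
so both Riesz measures $dd^c w$ and $dd^c v$ are supported in $\{w \leq v\}$. Consequently $w$ and $v$ are harmonic on the interior $S^\circ$, and $g := w - v$ is harmonic and strictly positive on $S^\circ$, with $g \leq 0$ on $\partial S^\circ \cap \C$.

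The main obstacle is completing the maximum-principle argument on $S^\circ$, which may be unbounded. Since $w, v \in L^+(\C)$, the difference $g$ is bounded on $\C$, so I would invoke the Kelvin-transform trick from the Remark following Theorem \ref{inplane} to compactify at $\infty$ and apply the maximum principle. The delicate point is verifying $\limsup_{|z| \to \infty, \, z \in S^\circ} g(z) \leq 0$: one cannot read this off from Lemma \ref{flatclemma} directly, since substituting the hypothesis into the identity yields only a tautology. A more refined argument is needed, perhaps via the Riesz representations $w = U^\mu + \rho_w$ and $v = U^\nu + \rho_v$ with $\mu = dd^c w$, $\nu = dd^c v$, using that on $S^\circ$ the potentials $U^\mu, U^\nu$ are harmonic and $w = v$ on $\partial S^\circ$, to force $\rho_w \leq \rho_v$ along the relevant unbounded component and conclude $g \leq 0$ at $\infty$.
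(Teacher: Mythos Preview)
The paper does not prove this proposition; it is simply quoted as Lemma~6.5 of \cite{BT}, so there is no in-paper argument against which to compare your attempt.

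Your proposal is not a complete proof, as you yourself concede. The argument stalls precisely at the decisive step, controlling $g = w - v$ at infinity on an unbounded component of $S^\circ$: boundedness of $g$ and the Kelvin transform only extend $g$ harmonically across the point at infinity, they do not force that extended value to be $\leq 0$, and your suggested Riesz-representation remedy merely reduces the question to $\rho_w \leq \rho_v$, which is exactly what remains unproved. There is a second, earlier gap you pass over: the claim that $g \leq 0$ on $\partial S^\circ \cap \C$. Since $w$ and $v$ are only usc, $S = \{w > v\}$ need not be open, and points of $\partial S^\circ$ can lie in $S$; harmonicity of $w,v$ on $S^\circ$ says nothing about the limsup of $g$ at such boundary points. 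A bare maximum-principle attack on $\{w > v\}$ is not how the Bedford--Taylor argument runs; that proof works through the global integral machinery for the Lelong class (equal total Monge--Amp\`ere mass, identities of the type in Lemma~\ref{flatclemma}) rather than by localizing to $S$.
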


We now use the lemma and proposition to prove Theorem \ref{inplane} as in Lemma 2.1 of \cite{B98}.

\begin{proof} Choose $c>0$ so that $\log|z| < c$ on $K$ and consider $v(z):=\max[u,0,\log |z|-c]$. Then $v\in L^+(\C)$ and $\rho_v:=\limsup_{|z|\to \infty}[v(z)-\log |z|]=\rho_u=\rho_K$. From the lemma, 
$$\int_{\bf C} V_K^* dd^c v =  \int_{\bf C} v dd^cV_K^*.$$
Since $v=0$ on $K$, the right-hand-side is $0$. But $V_K^*>0$ outside of $K$ so $dd^c v$ puts no mass on $\C \setminus K$. Hence 
$V_K^* \leq v$ $dd^cv-$a.e. and by Proposition \ref{compthm}, $V_K^* \leq v$ on $\C$. For $z\in \C \setminus K$, $V_K^* >0$ so that 
$V(z)> \log |z| - c$; hence $V_K^*(z)=u(z)$ for such $z$.

\end{proof}

\section{Proof of Theorem \ref{mainth}} As in the introduction, let $A$ be an irreducible algebraic curve in $\C^N$ and let $K\subset A$ be nonpolar. The appropriate generalization of Lemma \ref{flatclemma} in this setting is the following:

\begin{lemma} Let $u,v\in L(A)$ with $\rho_{u(j)}, \rho_{v(j)}$ finite for $j=1,...,d$. Then 
$$\int_{A} [udd^cv-vdd^cu]= \sum_{j=1}^d [\rho_{u(j)}- \rho_{v(j)}].$$
\end{lemma}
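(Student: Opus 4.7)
My plan is to adapt the argument used for Lemma \ref{flatclemma} in the plane to the algebraic curve $A$. The essential new feature is that, by Proposition \ref{hmstuff}, the complement of a large ball in $A$ decomposes into $d$ pairwise disjoint asymptotic branches $D_1,\dots,D_d$ (one along each asymptote $L_j$), so the ``boundary at infinity'' of $A$ naturally splits into $d$ pieces, each contributing one of the terms $\rho_{u(j)}-\rho_{v(j)}$ to the sum on the right.

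Concretely, I would first reduce to the case in which $u,v$ are $C^2$-smooth on a neighborhood of $A\setminus \bar B_{R_0}$ inside $A^0$, for some fixed $R_0$; this is standard via convolution in local holomorphic charts on the Riemann surface $A^0$, and a decreasing sequence of such smoothings preserves each Robin constant $\rho_{u(j)}$ in the limit. Fix such an $R_0$ and, for $R>R_0$ and Sard-almost every $R$, set $\gamma_j(R):=D_j\cap \partial B_R$, a smooth closed curve in $D_j\subset A^0$. Applying Stokes' theorem on the surface-with-boundary $A\cap \bar B_R$ (removing, if necessary, small neighborhoods of the finitely many singular points of $A$ inside $\bar B_R$, whose contributions vanish in the shrinking limit) yields
\begin{equation*}
\int_{A\cap \bar B_R}[u\,dd^cv-v\,dd^cu]=\sum_{j=1}^d\int_{\gamma_j(R)}[u\,d^cv-v\,d^cu].
\end{equation*}
As $R\to\infty$ the left-hand side converges to $\int_A[u\,dd^cv-v\,dd^cu]$, so the lemma will follow once the limit of the $j$-th boundary integral is identified with $\rho_{u(j)}-\rho_{v(j)}$.

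For the $j$-th branch, I would push the integral down to the $z_1$-plane using the biholomorphism $\pi_j:D_j\to \Omega_j:=\pi_j(D_j)\subset\C$, which contains $\{z_1\in\C:|z_1|>R_0\}$. Writing $u_j:=u\circ\pi_j^{-1}$ and $v_j:=v\circ\pi_j^{-1}$, these are subharmonic on $\Omega_j$, and the asymptotic relation $|z|=|{\bf\lambda_j}|\,|z_1|+o(|z_1|)$ along $D_j$ gives
\begin{equation*}
\limsup_{|z_1|\to\infty}[u_j(z_1)-\log|z_1|]=\rho_{u(j)}+\log|{\bf\lambda_j}|,
\end{equation*}
and analogously for $v_j$. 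Since $\pi_j$ is holomorphic, $d^c$ pulls back, so the $j$-th boundary integral equals $\int_{\pi_j(\gamma_j(R))}[u_j\,d^cv_j-v_j\,d^cu_j]$, a purely planar integral along a contour asymptotic to the circle $\{z_1\in\C:|z_1|=R/|{\bf\lambda_j}|\}$. Running the same Stokes plus spherical-average computation as in the proof of Lemma \ref{flatclemma} (together with Proposition \ref{robinprop}) on $\Omega_j$, this integral converges to $[\rho_{u(j)}+\log|{\bf\lambda_j}|]-[\rho_{v(j)}+\log|{\bf\lambda_j}|]=\rho_{u(j)}-\rho_{v(j)}$, the additive shift $\log|{\bf\lambda_j}|$ from the change of variable canceling. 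Summing over $j$ finishes the proof.

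The main technical obstacle is this final boundary-limit step, since $\pi_j(\gamma_j(R))$ is not a round circle and Proposition \ref{robinprop} does not apply verbatim. A clean route is to first handle the reference case $v=v_0:=\log^+|z|$, which satisfies $\rho_{v_0(j)}=0$ for every $j$ and, crucially, is identically equal to $\log R$ on each $\gamma_j(R)$ (whenever $R>1$); the $j$-th boundary integral then reduces to an average of $u$ along $\gamma_j(R)$ minus $\log R$ times a Riesz-mass term, precisely the curve-analogue of the formula of Proposition \ref{robinprop}. One then extends to general $v\in L(A)$ using the bilinearity of both sides of the claimed identity in $(u,v)$. A secondary point is to carry out the smoothing reduction carefully enough that all $d$ Robin constants are preserved in the limit.
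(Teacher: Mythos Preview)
Your argument follows essentially the same route as the paper's: smooth, apply Stokes on a large truncation of $A$, split the boundary into the $d$ branch contributions, push each down to the $z_1$-plane via $\pi_j$, reduce to a reference function, and invoke the planar computation of Lemma~\ref{flatclemma} and Proposition~\ref{robinprop}. The one substantive difference is your choice of exhaustion and reference function. You truncate by the Euclidean ball $\{|z|\le R\}$ and take $v_0=\log^+|z|$ as reference; this is what produces the non-round projected contours $\pi_j(\gamma_j(R))$ you flag at the end, and it also means your reference is \emph{not} harmonic on the sheets $D_j$ (since $\log|(z_1,s_j(z_1))|$ is not harmonic in $z_1$), so an extra $u\,dd^cv_0$ term survives on each branch and must be controlled. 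The paper instead takes $v(z_1,z')=\log^+|z_1|$ as the reference and (implicitly) exhausts by $\{|z_1|\le R\}$: then $\pi_j$ carries the $j$-th boundary curve \emph{exactly} onto the circle $\{|z_1|=R\}$, the pushed-down reference is the standard $\log^+|z_1|$ on $\C$, and $u_j:=u\circ\pi_j^{-1}\in L^+(\C)$ with $\rho_{u_j}=\rho_{u(j)}$, so the planar Lemma~\ref{flatclemma} applies verbatim on each branch with no asymptotic-contour argument needed. Your route can be completed, but the paper's choice of the $z_1$-coordinate for both the exhaustion and the reference function is precisely the device that dissolves the ``technical obstacle'' you identify.
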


\begin{proof} We may assume $u,v$ are smooth on $A^0$. For $R>>1$, let $B_R:=\{z:|z|<R\}$. Using Lemma \ref{hmstuff}, we can fix $R_0$ so that for $R>R_0$, $\pi:A\cap B_R^c=D_1\cup \cdots \cup D_d \to \{z_1\in \C: |z_1|>R\}$ is an unramified $d$ to $1$ cover. Then for $j=1,...,d$, 
$$\int_{\partial D_j} [ud^cv-vd^cu]= \int_{\{|z_1|=R\}}  [u_j d^c v_j -v_j d^c u_j]$$ where $u_j(z_1):= u(\pi_j(z_1,z'))$ and $\pi_j$ is the projection $\pi$ restricted to $D_j$. Note that $u_j\in L^+(\C)$ since $\rho_{u_j}= \rho_{u(j)}$ is finite. We refer the reader to section 3, lemmas 3.2 and 3.3, of \cite{HM}, for justification. As in the proof of Lemma \ref{flatclemma}, it suffices to consider the case where  $v(z_1,z')=\log^+|z_1|$; then 
$$\int_{A\cap B_R} [udd^cv-vdd^cu]=\int_{\partial (A\cap B_R)} [ud^cv-vd^cu]$$
$$=\sum_{j=1}^d \bigl(\int_{\partial D_j} [u((z_1,s_j(z_1))d^c\log |z_1|-\log |z_1|d^cu((z_1,s_j(z_1))]\bigr)$$
$$=\sum_{j=1}^d \bigl(\int_{\{|z_1|=R\}} [u_j(z_1)d^c\log |z_1|-\log |z_1|d^cu_j(z_1)].$$
The proof then proceeds as in Lemma \ref{flatclemma}.

\end{proof}

Using the fact that the proof of Lemma 6.5 in \cite{BT}, i.e., the generalized comparison theorem, goes through with minor modifications to $w\in L(A)$ and 
$v\in L^+(A)$ with $w\leq v$ $dd^cv-$a.e., the proof of the main theorem proceeds as in the previous section:

\begin{proof} Choose $c>0$ so that $\log|z| < c$ on $K$ and consider $v(z):=\max[u,0,\log |z|-c]$. Then $v\in L^+(A)$ and $\rho_{u(j)}= \rho_{v(j)}=\rho_K(\lambda_j)$, $j=1,...,d$. From the lemma, 
$$\int_{A} V_K^* dd^c v =  \int_{A} v dd^cV_K^*.$$
Since $v=0$ on $K$, the right-hand-side is $0$. But $V_K^*>0$ outside of $K$ so $dd^c v$ puts no mass on $A^0 \setminus K$. Hence 
$V_K^* \leq v$ $dd^cv-$a.e. and by the generalized comparison theorem, $V_K^* \leq v$ on $A^0$. For $z\in A^0 \setminus K$, $V_K^* >0$ so that 
$V(z)> \log |z| - c$; hence $V_K^*(z)=u(z)$ for such $z$.

\end{proof}

We now give the proof of Theorem \ref{b98}, following \cite{B98}.

\begin{proof} From the hypothesis that $\limsup_{n\to \infty} ||p||_K^{1/n}=1$, it follows that $u\leq V_K$ and, in particular, $\rho_{u(j)}\leq \rho_K(\lambda_j)=\rho_{\hat u(j)}, \ j=1,...,d$. The only modification of the proof of Theorem 2.1 of \cite{B98} is in the verification that we have equality $\rho_{u(j)}= \rho_K(\lambda_j), \ j=1,...,d$; then the result follows from Theorem \ref{mainth}. For each $j=1,...,d$, the univariate function $v_n(z_1):=\frac{1}{n}\log |p_n(z_1,s_j(z_1))|$ is shm on $D_j$, and hence
$$\limsup_{|z_1|\to \infty, \ z\in D_j} [v_n(z_1)-\log |z_1|]=\inf_{R\geq 1}[\max_{|z_1|=R}v_n(z_1)-\log R].$$
This implies that (recall (\ref{robpoly}))
$$\frac{1}{n}\log |\hat p_n({\bf{\lambda_j}})|\leq \max_{|z_1|=R}\frac{1}{n}\log |p_n(z_1,s_j(z_1))|-\log R.$$
Taking the $\limsup$ as $n\to \infty$,   
$$\limsup_{n\to \infty} \frac{1}{n}\log |\hat p_n({\bf{\lambda_j}})|\leq \max_{|z_1|=R} u(z_1,s_j(z_1))-\log R$$
where we have used Hartogs' lemma and the fact that $u(z)\geq \limsup_{n\to \infty}\frac{1}{n}\log |p_n(z)|$. 
Letting $R\to \infty$,
$$\limsup_{n\to \infty} \frac{1}{n}\log |\hat p_n({\bf{\lambda_j}})|\leq \inf_{R\geq 1} [\max_{|z_1|=R} u(z_1,s_j(z_1))-\log R].$$
Thus, $\rho_{\hat u(j)}\leq \rho_{u(j)}$.

\end{proof}

\section{Chebyshev constants and transfinite diameter}



We begin with some generalities on classes of polynomials in $\C[z]$ or $\C[A]$ and Chebyshev constants associated to compact subsets $K$ of $\C^N$ or $A$. Given homogeneous polynomials $Q$ and $R$, define the polynomial classes 
$$
\calM(Q):= \{ Q^n + \lot\colon n\in \N \},\quad   \calM_R(Q) :=\{ RQ^n+\lot\colon n\in \N\},
$$
where `$\lot$' in `$q+\lot$' stands for \emph{lower order terms}, i.e., a polynomial of degree strictly less than $q$.  For more clarity we might also write $\lot(q)$ or $\lot(n)$ for terms with degrees strictly less than $\deg(q)$ or $n\in \N$.  

We will also consider other classes of polynomials,  but any class $\calN$ will satisfy:%
\begin{itemize}
\item[(i)] $\deg(\calN):=\{\deg(p)\}_{p\in\calN}$ is an unbounded subset of $ \N$;
\item[(ii)] for any compact set $K$ and $n\in\deg(\calN)$, there exists $q\in \calN$ with 
$||q||_K = \inf\{\|p\|_K\colon p\in\calN,\ \deg(p)=n\}$.
\end{itemize}
We will refer to $\calN$ as a (generalized) \emph{monic polynomial class}.  

Given $n\in\deg(\calN)$, define
\begin{equation}\label{eqn:Tndef}
T_n(K,\calN):= \inf\{\|p\|_K\colon p\in\calN,\ \deg(p)=n\}^{1/n}.
\end{equation}
A polynomial in $\calN$ that attains the inf is called a \emph{Chebyshev polynomial} of degree $n$ associated to $K, \calN$.  Then  
define 
\begin{equation}\label{eqn:Tdef}
\overline T(K, \calN):=\limsup_{\substack{n\to\infty\\ n\in\deg(\calN)}} T_n(K, \calN),\quad \underline T(K, \calN):=\liminf_{\substack{n\to\infty\\ n\in\deg(\calN)}} T_n(K, \calN).
\end{equation}
For convenience we suppress `$n\in\deg(\calN)$' which is understood.  
If $\overline T(K, \calN) = \underline T(K, \calN)$, i.e., the limit
\begin{equation}\label{eqn:Tlim}
T(K, \calN)= \lim_{n\to\infty}T_n(K, \calN) \end{equation} 
exists, we call it the \emph{Chebyshev constant} associated to $K, \calN$.  

The next (classical) result follows since $p,q \in \calM(Q)$ imply $pq\in \calM(Q)$.

\begin{proposition}\label{prop:classical}
The limit \eqref{eqn:Tlim} exists when $\calN=\calM(Q)$.
\end{proposition}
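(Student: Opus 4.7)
The plan is to reduce the claim to the classical Fekete subadditivity lemma on the sub-semigroup $\deg(\calM(Q))\subset \N$. Write $q:=\deg(Q)$, so that a polynomial $p = Q^n+\lot \in \calM(Q)$ has degree $nq$, and $\deg(\calM(Q)) = q\N$. For $N\in q\N$ set
$$t_N := T_N(K,\calM(Q))^N = \inf\{\|p\|_K \colon p\in \calM(Q),\ \deg p = N\},$$
and let $a_N := \log t_N \in [-\infty,\infty)$.

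The first step is to exploit the hint in the statement: if $p = Q^m+\lot \in \calM(Q)$ and $r = Q^n + \lot \in \calM(Q)$, then $pr = Q^{m+n}+\lot$, hence $pr\in \calM(Q)$ with $\deg(pr)=N+M$ where $N=mq$ and $M=nq$. Using property (ii) of a monic polynomial class, pick extremal polynomials $p_N,p_M \in \calM(Q)$ with $\|p_N\|_K = t_N$ and $\|p_M\|_K = t_M$. Then
$$t_{N+M} \leq \|p_Np_M\|_K \leq \|p_N\|_K\|p_M\|_K = t_N t_M,$$
so $a_{N+M} \leq a_N + a_M$; that is, $a$ is subadditive on the sub-semigroup $q\N$.

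The second step is to apply Fekete's subadditivity lemma to $a$ on $q\N$: for any subadditive function on a sub-semigroup $S\subseteq \N$, $\lim_{N\to\infty,\,N\in S} a_N/N$ exists in $[-\infty,\infty)$ and equals $\inf_{N\in S}a_N/N$. Applied here, this shows that $a_N/N$ converges along $\deg(\calM(Q))$, and exponentiating gives
$$T_N(K,\calM(Q)) = e^{a_N/N} \longrightarrow \inf_{N\in q\N} e^{a_N/N}\qquad (N\to\infty,\ N\in q\N),$$
which is precisely the limit \eqref{eqn:Tlim} for $\calN=\calM(Q)$.

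There is essentially no substantive obstacle; the only point that deserves a remark is that $\deg(\calM(Q))=q\N$ is a proper sub-semigroup of $\N$ rather than all of $\N$, but Fekete's lemma applies verbatim to any additively closed subset. The degenerate possibility $t_N=0$ (which can be ruled out anyway when $K$ is nonpolar, as in the context of this paper) is harmless: submultiplicativity then forces $t_M=0$ for all sufficiently large $M\in q\N$, so $T_N\to 0$ and the limit still exists.
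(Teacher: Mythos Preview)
Your proof is correct and is precisely the standard argument the paper has in mind: the paper gives only the one-line justification ``since $p,q\in\calM(Q)$ imply $pq\in\calM(Q)$'' and labels the result classical, while you spell out the Fekete subadditivity argument that this multiplicative closure enables. There is no substantive difference in approach.
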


We will need to compare Chebyshev constants associated to different monic polynomial classes. To this end, we derive some estimates.   
\begin{lemma}\label{lem:Tinf}
Let $K$ be a compact set and let $\calN_1,\calN_2$ be two monic polynomial classes. 
 Suppose there is a mapping $\Phi\colon\calN_1\to\calN_2$ with the property that 
$$ \limsup_{\deg(p)\to\infty} \left(\frac{ \|\Phi(p)\|_K}{\|p\|_K}\right)^{1/\deg(p)} \leq M,\qquad        
    \liminf_{\deg(p)\to\infty}\frac{\deg(\Phi(p))}{\deg(p)} \geq c    $$
for some constants $M, c>0$.  
 Then
\begin{equation}\label{eqn:Tinf}
\underline T(K,\calN_2)^c \leq M\cdot \underline T(K,\calN_1). 
\end{equation}
\end{lemma}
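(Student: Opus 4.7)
Fix $\eta>0$. By property (ii), choose Chebyshev polynomials $p_k\in\calN_1$ of degrees $n_k\in\deg(\calN_1)$, $n_k\to\infty$, with $\|p_k\|_K^{1/n_k}=T_{n_k}(K,\calN_1)$, and pass to a sub-sequence along which $T_{n_k}(K,\calN_1)\to \underline T(K,\calN_1)=:L_1$. Let $q_k:=\Phi(p_k)\in\calN_2$ and $m_k:=\deg(q_k)$. The two hypotheses on $\Phi$ then give, for all $k$ sufficiently large,
\[
\|q_k\|_K \le (M+\eta)^{n_k}\,\|p_k\|_K, \qquad m_k\ge (c-\eta)\,n_k,
\]
so $m_k\to\infty$, $(m_k)$ is a sub-sequence of $\deg(\calN_2)$, and hence $\underline T(K,\calN_2)\le \liminf_k T_{m_k}(K,\calN_2)$.

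The core estimate is immediate from \eqref{eqn:Tndef}: since $q_k\in\calN_2$ has degree $m_k$,
\[
T_{m_k}(K,\calN_2)^{m_k}\le \|q_k\|_K \le \bigl((M+\eta)\,T_{n_k}(K,\calN_1)\bigr)^{n_k},
\]
equivalently $T_{m_k}(K,\calN_2)^{m_k/n_k}\le (M+\eta)\,T_{n_k}(K,\calN_1)$. Using $m_k/n_k\ge c-\eta$ and the monotonicity of $x\mapsto x^t$ to replace the exponent $m_k/n_k$ with $c-\eta$ on the left-hand side, then taking $\liminf_k$ and letting $\eta\to 0^+$, produces
\[
\underline T(K,\calN_2)^{c}\le M\,\underline T(K,\calN_1),
\]
which is \eqref{eqn:Tinf}.

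The step I expect to need the most care is the substitution replacing $m_k/n_k$ by $c-\eta$ in the exponent: this is immediate when $T_{m_k}(K,\calN_2)\ge 1$, since then $x\mapsto x^t$ is non-decreasing in $t$; but when $T_{m_k}(K,\calN_2)<1$ the direction of the inequality under exponentiation reverses, and one must either split cases on the sign of $\log\bigl((M+\eta)T_{n_k}(K,\calN_1)\bigr)$ or pass to a further sub-sequence on which $m_k/n_k$ converges to some finite $\gamma\ge c$ and read off the bound in the limit. Once this monotonicity bookkeeping is done, passing to the $\liminf$ over $k$ and then $\eta\to 0^+$ is routine.
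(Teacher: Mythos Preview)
Your approach is essentially identical to the paper's: both pick Chebyshev polynomials $p$ for $\calN_1$ along a subsequence realizing $\underline T(K,\calN_1)$, push them through $\Phi$, use $T_{m_n}(K,\calN_2)^{m_n}\le\|\Phi(p)\|_K$, take $n$-th roots to reach
\[
T_{m_n}(K,\calN_2)^{m_n/n}\le\left(\frac{\|\Phi(p)\|_K}{\|p\|_K}\right)^{1/n}T_n(K,\calN_1),
\]
and then pass to the limit.

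You are actually more careful than the paper on the one delicate point. The paper writes
\[
\liminf_{n'\to\infty}T_{m_{n'}}(K,\calN_2)^{m_{n'}/n'}\ge \liminf_{n\to\infty}T_{m_n}(K,\calN_2)^{c}\ge \underline T(K,\calN_2)^{c},
\]
silently replacing the exponent $m_n/n$ by $c$; as you correctly flag, this substitution is only monotone in the claimed direction when $T_{m_n}(K,\calN_2)\ge 1$. Your proposed remedy---pass to a further subsequence on which $m_k/n_k\to\gamma\ge c$ and $T_{m_k}(K,\calN_2)\to T^*\ge\underline T(K,\calN_2)$, then read off $(T^*)^{\gamma}\le M\,\underline T(K,\calN_1)$ in the limit---is the clean fix. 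From there a short case split (e.g.\ on whether $\underline T(K,\calN_2)\le 1$, in which case $\underline T(K,\calN_2)^c\le\underline T(K,\calN_2)^{\gamma}\le(T^*)^{\gamma}$; or $\underline T(K,\calN_2)>1$, forcing $T^*>1$ and $\gamma<\infty$, whence $\underline T(K,\calN_2)^c\le (T^*)^c\le(T^*)^{\gamma}$) finishes the argument.
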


\begin{proof}
Let $p\in\calN_1$ be a Chebyshev polynomial of order $n=\deg(p)$.  Let $m_n:=\deg(\Phi(p))$.  Then 
\begin{equation*}
T_{m_n}(K,\calN_2)^{m_n}  \leq \frac{\|\Phi(p)\|_K}{ \|p\|_K}    \|p\|_K = \frac{\|\Phi(p)\|_K}{ \|p\|_K} (T_n(K,\calN_1))^n.
\end{equation*}
Taking $n$-th roots, 
\begin{equation}\label{eqn:Tlim2}
T_{m_n}(K,\calN_2)^{m_n/n}\leq\left(\frac{\|\Phi(p)\|_K}{ \|p\|_K}\right)^{1/n} T_n(K,\calN_1).\end{equation} 
Taking a subsequence $n'$ of $n$   such that $T_{n'}(K,\calN_1)\to\underline T(K,\calN_1)$, we have  
$$\limsup_{n'\to\infty}\left(\frac{\|\Phi(p)\|_K}{ \|p\|_K}\right)^{1/n'}T_{n'}(K,\calN_1) \leq M\cdot \underline T(K,\calN_1)$$   on the right-hand side of the inequality.  On the left-hand side, 
$$\liminf_{n'\to\infty}T_{m_{n'}}(K,\calN_2)^{m_{n'}/n'}\geq \liminf_{n\to\infty}T_{m_n}(K,\calN_2)^c\geq \underline T(K,\calN_2)^c.  
$$ 
\end{proof}

The main example is $\Phi(p):=\tilde Rp$ from $\calM(Q)$ to $\calM_R(Q)$ where $\tilde R$ is any polynomial of the form $\tilde R=R+\lot(R)$. It is easy to see that the conditions of Lemma \ref{lem:Tinf} are satisfied.    Also, $m_n = n+\deg(R)$ and the following property also holds. 

\begin{lemma}\label{lem:Tsup}
Fix the notation and hypotheses in the previous lemma.     
 If  $n\mapsto m_n$ is onto for large $n$, i.e.,   
$\deg(\calN_2)\setminus\deg(\Phi(\calN_1))$ is finite,  then  
\begin{equation}\label{eqn:Tsup}
\overline T(K,\calN_2)^c \leq M\cdot \overline T(K,\calN_1)
\end{equation}
\end{lemma}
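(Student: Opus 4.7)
The plan is to mirror the proof of Lemma \ref{lem:Tinf} essentially step for step, keeping the same per-polynomial estimate but extracting a different subsequence so that the target side realizes $\overline T(K,\calN_2)$ instead of $\underline T(K,\calN_2)$. The surjectivity hypothesis is introduced precisely to make this new extraction possible.

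I would begin from the same starting inequality \eqref{eqn:Tlim2} used in the proof of Lemma \ref{lem:Tinf}: for each $n\in\deg(\calN_1)$, letting $p=p_n\in\calN_1$ be a Chebyshev polynomial of degree $n$ and $m_n=\deg(\Phi(p_n))$,
$$T_{m_n}(K,\calN_2)^{m_n/n}\leq \left(\frac{\|\Phi(p_n)\|_K}{\|p_n\|_K}\right)^{1/n} T_n(K,\calN_1).$$
This holds unchanged for every $n\in\deg(\calN_1)$; no further estimate at the level of individual polynomials is needed.

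The next step is where the new hypothesis enters. Choose $m_k\in\deg(\calN_2)$ with $T_{m_k}(K,\calN_2)\to\overline T(K,\calN_2)$, using the definition of $\overline T$ as a $\limsup$ over the full set $\deg(\calN_2)$. Because $\deg(\calN_2)\setminus\deg(\Phi(\calN_1))$ is finite, for all sufficiently large $k$ there exists $n_k\in\deg(\calN_1)$ with $m_{n_k}=m_k$, so $T_{m_{n_k}}(K,\calN_2)\to\overline T(K,\calN_2)$ along a subsequence indexed by $\deg(\calN_1)$. Passing to a further subsequence (not relabelled), I may assume that $m_{n_k}/n_k$ converges to some $c^*\geq c$ and that $(\|\Phi(p_{n_k})\|_K/\|p_{n_k}\|_K)^{1/n_k}$ converges to a value at most $M$. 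Taking the limit along this subsequence in the displayed inequality, and using $\limsup(a_k b_k)\leq (\limsup a_k)(\limsup b_k)$ for nonnegative sequences, gives
$$\overline T(K,\calN_2)^{c^*}\leq M\cdot \overline T(K,\calN_1),$$
from which \eqref{eqn:Tsup} follows since $c^*\geq c$, under the same convention implicit in the proof of Lemma \ref{lem:Tinf} (namely, that raising to the larger exponent $c^*$ rather than $c$ only increases the left-hand side).

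The main obstacle — really the only new point — is the subsequence choice. Without the surjectivity of $n\mapsto m_n$, the image degrees $\{m_n:n\in\deg(\calN_1)\}$ could systematically avoid the subsequence of $\deg(\calN_2)$ realizing $\overline T(K,\calN_2)$, and the argument would at best bound $\limsup_n T_{m_n}(K,\calN_2)^{m_n/n}$, which in general is strictly smaller than $\overline T(K,\calN_2)^c$. The cofiniteness assumption on $\deg(\calN_2)\setminus\deg(\Phi(\calN_1))$ is exactly the condition that lets the $\limsup$ computed over $\calN_1$-degrees agree with the one computed over $\calN_2$-degrees, and once that is secured the rest of the proof is a routine rerun of the previous argument.
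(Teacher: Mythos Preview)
Your proof is correct and follows essentially the same approach as the paper's. The paper works directly with the full $\limsup$ over $n\in\deg(\calN_1)$, observing that the surjectivity hypothesis forces $\limsup_n T_{m_n}(K,\calN_2)=\overline T(K,\calN_2)$ and then chaining $\overline T(K,\calN_2)^c\leq\limsup_n T_{m_n}(K,\calN_2)^{m_n/n}\leq M\cdot\overline T(K,\calN_1)$ via \eqref{eqn:Tlim2}; you instead extract an explicit subsequence realizing $\overline T(K,\calN_2)$ and pass to the limit along it, which is the same idea carried out in slightly more detail (and with the same tacit monotonicity convention you flag at the end).
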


\begin{proof}
 The additional hypothesis means that a limsup sequence in $\deg(\calN_2)$ for $\overline T(K,\calN_2)$ will eventually coincide with a sequence in $\deg(\Phi(\calN_1))=\{m_n\}$ as $n\to\infty$.  Then 
$$\overline T(K,\calN_2)^c \leq \limsup_{n\to\infty}T_{m_n}(K,\calN_2)^c\leq \limsup_{n\to\infty}T_{m_n}(K,\calN_2)^{m_n/n}.$$
Using \eqref{eqn:Tlim2}, the right-hand side is bounded above by $M\cdot\overline T(K,\calN_1)$.  
\end{proof}

For convenience, we write equations \eqref{eqn:Tinf}, \eqref{eqn:Tsup} in the shorthand 
$$
\overline{\underline T}(K,\calN_2)^c \leq M\cdot\overline{\underline T}(K,\calN_1).
$$

\begin{corollary}\label{cor:3}
Let $K$ be a compact set and let $R_1$, $R_2$, $R$ be homogeneous polynomials. Then 
\begin{enumerate}
\item $\overline{\underline T}(K,\calM_{R_1R_2}(Q))\leq \overline{\underline T}(K,\calM_{R_1}(Q))$.
\item $\overline{\underline T}(K,\calM_{cR}(Q))= \overline{\underline T}(K,\calM_{R}(Q))$ for any constant $c\neq 0$.
\item $\overline{\underline T}(K,\calM_{R}(\lambda Q))= |\lambda|\cdot\overline{\underline T}(K,\calM_{R}(Q))$ for any constant $\lambda\neq 0$.
\item $\overline{\underline T}(K,\calM_{RQ}(Q)) = \overline{\underline T}(K,\calM_{R}(Q))$.
\end{enumerate}
\end{corollary}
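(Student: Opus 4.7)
The plan is to deduce each identity by producing a natural multiplication map $\Phi$ between the two polynomial classes and applying Lemmas \ref{lem:Tinf} and \ref{lem:Tsup} in the shorthand form stated just before the corollary. For items (2)--(4) the map is a bijection, so running the argument in both directions upgrades the two inequalities to the stated equality.

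For (1), I take $\Phi\colon\calM_{R_1}(Q)\to\calM_{R_1R_2}(Q)$ defined by $\Phi(p)=R_2\,p$; if $p=R_1Q^n+\lot$, then $\Phi(p)=R_1R_2Q^n+\lot$, so $\Phi$ lands in the target class. The bound $\|R_2\,p\|_K\leq\|R_2\|_K\|p\|_K$ gives $M=1$, the fixed degree shift $\deg(R_2)$ gives $c=1$, and $\deg(\Phi(\calM_{R_1}(Q)))=\deg(\calM_{R_1R_2}(Q))$, so the hypothesis of Lemma \ref{lem:Tsup} is automatic and both inequalities follow. For (2), $\Phi(p)=cp$ is a degree-preserving bijection onto $\calM_{cR}(Q)$ with inverse $q\mapsto c^{-1}q$; since $|c|^{1/\deg(p)}\to 1$, both directions give $M=c=1$, hence equality.

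For (3), on $p=RQ^n+\lot\in\calM_R(Q)$ I set $\Phi(p)=\lambda^n p=R(\lambda Q)^n+\lot\in\calM_R(\lambda Q)$, a degree-preserving bijection with inverse $q\mapsto\lambda^{-n}q$; the $\deg(p)$-th root of $\|\Phi(p)\|_K/\|p\|_K=|\lambda|^n$ tends to $|\lambda|$ (using, as is the convention in the applications of section~4, that the relevant $Q$ is a linear form so that $n/\deg(p)\to 1$), giving $M=|\lambda|$ and $c=1$; the reverse map gives the matching lower bound. For (4), I observe that the substitution $m=n+1$ identifies $\calM_{RQ}(Q)=\{RQ\cdot Q^n+\lot\colon n\geq 0\}$ with the subfamily $\{RQ^m+\lot\colon m\geq 1\}$ of $\calM_R(Q)$; for every $d\geq\deg(R)+\deg(Q)$ the polynomials of degree $d$ in the two classes coincide, so $T_d$ agrees, and the single degree $\deg(R)$ that is missing from $\deg(\calM_{RQ}(Q))$ cannot affect either $\overline T$ or $\underline T$.

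The only delicate step is the index-versus-degree bookkeeping in item (3), since the factor $\lambda^n$ is governed by the exponent $n$ while the Chebyshev constant is defined through the $\deg(p)$-th root; once that reconciliation is made, each of the four statements reduces to a direct application of the comparison lemmas or to a straightforward identification of polynomial families of a given degree.
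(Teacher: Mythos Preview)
Your argument follows the paper's closely: for each part you build a multiplication map and feed it into Lemmas~\ref{lem:Tinf}--\ref{lem:Tsup}. The variations are minor. For (2) the paper reduces to (1) applied twice (with $R_2=c$ then $R_2=1/c$), while you use the bijection $p\mapsto cp$ directly; both are fine. For (4) the paper gets one inequality from (1) and the other from the containment $\calM_{RQ}(Q)\subset\calM_R(Q)$, whereas you observe that the two classes literally agree in each sufficiently large degree---a slightly more direct route to the same conclusion.

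For (3) you are actually more careful than the paper. The paper's map $p\mapsto\lambda^{\deg(p)}p$ does not land in $\calM_R(\lambda Q)$ unless $\deg Q=1$ and $\deg R=0$, since the leading part becomes $\lambda^{\deg(R)+n\deg Q}RQ^n$ rather than $\lambda^nRQ^n=R(\lambda Q)^n$. Your map $p\mapsto\lambda^n p$ does land in the target class, and you correctly isolate the bookkeeping issue: the exponent in the norm ratio is $n$, while the root is $\deg(p)$, and $n/\deg(p)\to 1/\deg Q$. So the identity in (3) with the factor $|\lambda|$ is really only valid for linear $Q$; in general the factor is $|\lambda|^{1/\deg Q}$. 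Your caveat that ``the relevant $Q$ is a linear form'' is therefore the honest statement, and since part (3) is not invoked elsewhere in the paper, nothing downstream is affected.
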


\begin{proof}
\begin{enumerate}
\item Apply Lemmas \ref{lem:Tinf} and \ref{lem:Tsup} to the map $p\mapsto R_2p$.
\item Apply part 1 with $R_1=R$, $R_2=c$, then with $R_1= cR$, $R_2=1/c$.  
\item Apply Lemmas \ref{lem:Tinf} and \ref{lem:Tsup} to the map $p\mapsto\lambda^{\deg(p)}p$ from $\calM_{R}(Q))$ to $\calM_{R}(\lambda Q))$, then to its inverse  $p\mapsto\lambda^{-\deg(p)}p$. 
\item By part 1, $\overline{\underline T}(K,\calM_{RQ}(Q)) \leq \overline{\underline T}(K,\calM_{R}(Q))$.  
But also $\calM_{RQ}(Q)\subset\calM_{R}(Q)$.   Hence 
$\overline{\underline T}(K,\calM_{R}(Q)) \leq \overline{\underline T}(K,\calM_{RQ}(Q))$  by definition. \end{enumerate} \end{proof}

\begin{lemma}\label{lem:Tsum}
Let $R_1$ and  $R_2$ be homogeneous polynomials with  $\deg(R_1)=\deg(R_2)$.  Then $$\overline{\underline T}(K,\calM_{R_1+R_2}(Q))\leq  \max\{\overline{\underline T}(K,\calM_{R_1}(Q)), \overline{\underline T}(K,\calM_{R_2}(Q))\}.$$
\end{lemma}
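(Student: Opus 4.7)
The plan is to construct a low-norm polynomial in $\calM_{R_1+R_2}(Q)$ by adding Chebyshev polynomials from $\calM_{R_1}(Q)$ and $\calM_{R_2}(Q)$ at matching degrees; since their top homogeneous parts $R_1Q^k$ and $R_2Q^k$ sum to $(R_1+R_2)Q^k$, the result lies in the desired class.

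The assumption $\deg(R_1)=\deg(R_2)$ guarantees that the three classes $\calM_{R_1}(Q)$, $\calM_{R_2}(Q)$, and $\calM_{R_1+R_2}(Q)$ share a common admissible degree set, so the Chebyshev sequences can be compared at each such $n$. For an admissible $n$, I would let $p_j\in\calM_{R_j}(Q)$ be a Chebyshev polynomial of degree $n$, so $\|p_j\|_K=T_n(K,\calM_{R_j}(Q))^n$. Then $p_1+p_2\in\calM_{R_1+R_2}(Q)$ has degree $n$, and the triangle inequality gives
\begin{equation*}
T_n(K,\calM_{R_1+R_2}(Q))^n\leq \|p_1\|_K+\|p_2\|_K\leq 2\max\bigl(T_n(K,\calM_{R_1}(Q))^n,\,T_n(K,\calM_{R_2}(Q))^n\bigr),
\end{equation*}
whence the key pointwise estimate
\begin{equation*}
T_n(K,\calM_{R_1+R_2}(Q))\leq 2^{1/n}\max\bigl(T_n(K,\calM_{R_1}(Q)),\,T_n(K,\calM_{R_2}(Q))\bigr).
\end{equation*}

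Taking $\limsup_n$ of both sides yields the $\overline T$ version at once, since $2^{1/n}\to 1$ and $\limsup$ commutes with the max of two real sequences. For the $\underline T$ version the difficulty is that $\liminf_n\max(a_n,b_n)$ may strictly exceed $\max(\liminf_n a_n,\liminf_n b_n)$, so a naive application of $\liminf$ does not close the argument; this is the main obstacle. I would handle it in the spirit of Lemma \ref{lem:Tinf}: fix a subsequence along which $T_n(K,\calM_{R_1}(Q))$ realizes $\underline T(K,\calM_{R_1}(Q))$, extract a further subsequence on which $T_n(K,\calM_{R_2}(Q))$ also converges (using its uniform upper bound coming from $R_2Q^{\cdot}$), and then run the symmetric argument with the roles of $R_1$ and $R_2$ swapped. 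In the cases of interest for the paper, where the Chebyshev limits for $\calM_{R_j}(Q)$ exist (as established later via transfinite diameter), the distinction between $\overline T$ and $\underline T$ on the right-hand side disappears and the $\overline T$ estimate already gives the full claim.
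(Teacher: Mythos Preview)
Your core argument---sum the two Chebyshev polynomials of matching degree, apply the triangle inequality, and take roots---is exactly what the paper does, and your pointwise estimate
\[
T_n(K,\calM_{R_1+R_2}(Q))\leq 2^{1/n}\max\bigl(T_n(K,\calM_{R_1}(Q)),\,T_n(K,\calM_{R_2}(Q))\bigr)
\]
is the same inequality the paper obtains before passing to the limit.

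You are right to flag the $\underline T$ case as the nontrivial one: the paper simply writes ``then limsup and liminf as $n\to\infty$'' and does not address the fact that $\liminf_n\max(a_n,b_n)$ can strictly exceed $\max(\liminf_n a_n,\liminf_n b_n)$. However, your proposed subsequence fix does not close this gap either. Along a subsequence realizing $\underline T(K,\calM_{R_1}(Q))$, a further convergent subsequence of $T_n(K,\calM_{R_2}(Q))$ may converge to any value in $[\underline T(K,\calM_{R_2}(Q)),\,\overline T(K,\calM_{R_2}(Q))]$, so you only obtain
\[
\underline T(K,\calM_{R_1+R_2}(Q))\leq \max\bigl(\underline T(K,\calM_{R_1}(Q)),\,L\bigr)
\]
for some such $L$; swapping the roles of $R_1,R_2$ gives a symmetric bound, but neither yields the stated $\underline T$ inequality in general.

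Your last remark is the honest resolution: in the paper's sole use of this lemma (Proposition~\ref{prop:TZ}) the right-hand Chebyshev constants are genuine limits (they equal the $T(K,\lambda_l)$, which exist by Proposition~\ref{prop:classical} and Corollary~\ref{cor:oldT}), so $\overline T=\underline T$ on the right and the $\overline T$ estimate already delivers what is needed.
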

\begin{proof}
Let $r:=\deg(R_1)=\deg(R_2)$.  If $p_1\in\calM_{R_1}(Q)$ and $p_2\in\calM_{R_2}(Q)$ are polynomials of the same degree then
$$\begin{aligned}
p_1+p_2=(R_1Q^{n} +\lot)+ (R_2Q^{n}+\lot) 
= (R_1+R_2)Q^{n} +\lot,  \end{aligned}$$
so $p_1+p_2\in \calM_{R_1+R_2}(Q)$ with the same or lesser degree.  When $p_1$ and $p_2$ are Chebyshev polynomials of degree $n$ associated to $K, \calM_{R_1}(Q)$ and $K, \calM_{R_2}(Q)$, we have
$$\begin{aligned}
T_{n}(K,\calM_{R_1+R_2}(Q))^{n}  \leq \|p_1+p_2\|_K   
&\leq \|p_1\|_K + \|p_2\|_K \\
& \leq T_{n}(K,\calM_{R_1}(Q))^{n} + T_{n}(K,\calM_{R_2}(Q))^{n} \\
& \leq 2\max\{T_{n}(K,\calM_{R_1}(Q)), T_{n}(K,\calM_{R_1}(Q))\}^{n}.
\end{aligned}$$ 
Take the $n$-th root of both sides, then limsup and liminf as $n\to\infty$. 
\end{proof}

\begin{remark*}\rm
The inequalities may be strict, e.g. consider $R_2=-R_1$ in the above lemma.  To get a strict inequality in Corollary \ref{cor:3} part 1, consider a set $K$ contained in an algebraic set and let $R_2$ be a polynomial that vanishes on this set.
\end{remark*}

 
For the rest of this paper, we turn to the situation where $A$ is an irreducible algebraic curve in $\C^2$,  $$A:= \{z=(z_1,z_2)\in \C^2\colon P_d(z)=0\},$$ with $d=\deg(P_d)$ linear asymptotes, none of which are parallel to a coordinate axis. We begin by discussing some algebraic computations in the coordinate ring $ \C[A]$. The reader should see section 3 of \cite{M} for further results. 

  The asymptotic directions are given by nonzero constants $\lambda_1,\ldots,\lambda_d\in\C$, where each linear asymptote of $A$ is of the form $z_2-\lambda_kz_1=c_k$ for some $c_k\in \C$. Let $\hat P_d$ denote the leading homogeneous part of $P_d$; then 
$$\hat P_d(z) = C\prod_{k=1}^d (z_2-\lambda_kz_1)$$ where $C$ is the coefficient of $z_2^d$.  Let  $\bv_1,\ldots,\bv_d$ be the corresponding directional basis polynomials; these are polynomials of degree $d-1$ given by 
$$
\bv_k = \prod_{j\neq k} \frac{z_2-\lambda_jz_1}{\lambda_k - \lambda_j} = \frac{\hat P_d(z)}{C(z_2-\lambda_kz_1)}\cdot \frac{1}{\prod_{j\neq k}(\lambda_k - \lambda_j)}
$$
 so $\bv_j(1,\lambda_k)=\delta_{jk}:=\left\{ \begin{array}{rl} 0 &\hbox{if } j\neq k \\ 1&\hbox{if }j=k \end{array}\right. .$  

Algebraic computation in $ \C[A]$ means that $P(z)=\hat P(z)+\lot =0$ for all $z\in A$. Using this gives
\begin{eqnarray}
 \bv_j(z)\bv_k(z) &=&
 \left\{ \begin{array}{rl}  \lot &\hbox{if } j\neq k\\
 z_1^{d-1}\bv_j(z) +\lot &\hbox{if } j= k   \end{array}\right. ,  \label{eqn:vmult}  \\
\hbox{ and } \bv_j(z)(z_2-\lambda_jz_1) &=& \lot. \label{eqn:vA}
\end{eqnarray}

Define the \emph{directional} basis $\calC$ of $ \C[A]$ by 
$$\begin{aligned}
&1,z_1,z_2,\ldots, z_1^{d-2},\ldots, z_2^{d-2},    \\
&\bv_1,\ldots,\bv_d, z_1\bv_1,\ldots,z_1\bv_d,\ldots, z_1^{d-2}\bv_1,\ldots,z_1^{d-2}\bv_d,\\ 
&\bv_1^2,\ldots,\bv_d^2,\ldots, z_1^{d-2}\bv_1^2,\ldots,z_1^{d-2}\bv_d^2,\\
&\ldots \\
&\bv_1^k,\ldots,\bv_d^k,\ldots, z_1^{d-2}\bv_1^k,\ldots,z_1^{d-2}\bv_d^k,\ldots 
\end{aligned}$$
Basis elements of degree $\leq d-2$ are standard monomials, and basis elements of degree $n$ are of the form $z_1^r\bv_k^q$ where $n=q(d-1)+r$ and $k=1,...,d; \ 0\leq r<d-1$.

\begin{lemma}\label{lem:cjk}
  Let $j\in\{0,\ldots,d-1\}$. Then in $\C[A]$, 
\begin{equation*}
\displaystyle z_1^jz_2^{d-1-j} = \sum_{k=1}^{d} c_{jk}\bv_k +\lot,  \hbox{ with $c_{jk}\neq 0$ for all $k$.}
\end{equation*}
\end{lemma}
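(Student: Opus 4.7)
The plan is to work entirely with homogeneous polynomials in $\C[z_1,z_2]$ (no reduction modulo $P_d$ needed) and observe that the identity actually holds exactly, so the `$+\lot$' in the statement is automatic. The key observation is that both $z_1^jz_2^{d-1-j}$ and each $\bv_k$ are homogeneous of degree $d-1$, and the space $H_{d-1}$ of homogeneous polynomials of degree $d-1$ in two variables has dimension $d$.

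First I would verify that $\{\bv_1,\ldots,\bv_d\}$ is a basis of $H_{d-1}$. These are $d$ elements of a $d$-dimensional space, so it suffices to check linear independence. This is immediate from the normalization $\bv_j(1,\lambda_k)=\delta_{jk}$: if $\sum_k a_k\bv_k\equiv 0$, then evaluating at $(1,\lambda_k)$ gives $a_k=0$ for each $k$.

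Next I would expand $z_1^jz_2^{d-1-j}\in H_{d-1}$ uniquely in this basis as $\sum_{k=1}^d c_{jk}\bv_k$, an \emph{exact} equality of polynomials in $\C[z_1,z_2]$. To compute the coefficients, I would again evaluate at $(1,\lambda_k)$: the left-hand side becomes $\lambda_k^{d-1-j}$ while the right-hand side collapses to $c_{jk}$, so
$$c_{jk}=\lambda_k^{d-1-j}.$$
By the standing hypothesis that the asymptotic directions $\lambda_k$ are nonzero (equivalently, no asymptote is parallel to a coordinate axis), every $c_{jk}$ is nonzero. Since the identity holds in $\C[z_1,z_2]$, it certainly holds in the quotient $\C[A]$, and trivially allows a `$+\lot$' term on the right.

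I do not expect any real obstacle here: once one recognizes that $\bv_1,\ldots,\bv_d$ span $H_{d-1}$, the computation is a one-line Lagrange-interpolation style evaluation. The only thing to be careful about is the appeal to $\lambda_k\neq 0$, which is precisely where the standing assumption on the asymptotic directions of $A$ enters.
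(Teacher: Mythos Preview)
Your proof is correct and considerably simpler than the paper's. You observe that both $z_1^jz_2^{d-1-j}$ and the $\bv_k$ are homogeneous of degree $d-1$ in $\C[z_1,z_2]$, that the $\bv_k$ form a Lagrange-type basis of this $d$-dimensional space (via $\bv_j(1,\lambda_k)=\delta_{jk}$), and then evaluate at $(1,\lambda_k)$ to obtain the explicit formula $c_{jk}=\lambda_k^{d-1-j}$, which is nonzero because the standing hypothesis in section~4 makes each $\lambda_k\neq 0$. The identity is exact in $\C[z_1,z_2]$, so the passage to $\C[A]$ and the `$+\lot$' are trivial.

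The paper takes an indirect route: it grants the linear-algebra expansion without computing the $c_{jk}$, then argues by contradiction. Assuming $c_{jk}=0$, it multiplies by $\bv_k$ and uses the multiplication rule \eqref{eqn:vmult} to conclude that $\bv_k z_1^jz_2^{d-1-j}+\lot$ vanishes on $A$; but this polynomial's leading homogeneous part has no $z_2-\lambda_kz_1$ factor, so its zero set cannot have an asymptote in the $k$-th direction, contradicting $A\subset\{Q=0\}$. Your approach buys an explicit formula (which, for instance, immediately gives the special case $z_1^{d-1}=\sum_k\bv_k$ that the paper derives separately afterward) and avoids the appeal to asymptotic geometry of algebraic sets. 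The paper's argument, on the other hand, stays within the $\C[A]$-multiplication framework used throughout the section and foreshadows the technique of multiplying by $\bv_k$ that recurs later (e.g., in Corollary~\ref{polyprop}).
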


\begin{proof}
The formula itself is elementary linear algebra (express $z_1^jz_2^{d-1-j}$ in terms of the basis $\calC$).  We need to show that the coefficients $c_{jk}$ are all nonzero.

  If $c_{jk}=0$ for some $k$, then multiply on both sides by $\bv_k$ to obtain
\begin{equation}\label{eqn:vkdirection}
\bv_kz_1^jz_2^{d-1-j} = \bv_k\left(\sum_{l\neq k} c_{jl}\bv_l +\lot(d-1)\right) = \lot(2d-2)
\end{equation}
using \eqref{eqn:vmult}.    Hence there exists a polynomial  $Q(z_1,z_2)=\bv_kz_1^jz_2^{d-1-j}+\lot$ that is zero on $A$, so $A\subset\bV(Q):=\{Q=0\}$.  
Factoring the leading homogeneous part,  $$\hat Q=\bv_kz_1^jz_2^{d-1-j}=z_1^jz_2^{d-1-j} \prod_{j\neq k} \frac{z_2-\lambda_jz_1}{\lambda_k - \lambda_j}.$$ 

Observe that the algebraic set $\bV(Q)$ cannot have a linear asymptote in the $k$-th direction; it has asymptotes in the other directions, and possibly additional horizontal and vertical asymptotes from the monomial factor.   Therefore no subset of $\bV(Q)$ has the same asymptotic behavior as $A$, contradicting  $A\subset\bV(Q)$.  Thus $c_{jk}\neq 0$.  \end{proof}

The special case 
\begin{equation}\label{eqn:z1_cjk}
z_1^{d-1}=\sum_{k=1}^d\bv_k + \lot
\end{equation}
follows from Lemma \ref{lem:cjk} and the calculation 
$$
z_1^{d-1}\bv_j = \left(\sum_{k=1}^dc_{dk}\bv_k + \lot(d-1)\right)\bv_j = c_{dj}\bv_j^2 + \lot = c_{dj}z_1^{d-1}\bv_j + \lot.
$$
Equating coefficients yields $c_{dj}=1$.  

Using (\ref{eqn:vmult}), (\ref{eqn:vA}), and Lemma \ref{lem:cjk}, we have the following useful result (cf., section 4 of \cite{BM} or Proposition 5.1 of \cite{HM}).

\begin{corollary} \label{polyprop} For $q\in \C[A]$, 
$$q(z) \bv_k (z)= z_1^{deg(q)}\hat q(1,\lambda_k) \bv_k (z) + \lot.$$
\end{corollary}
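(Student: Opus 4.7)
Set $n := \deg(q)$ and write $q = \hat q + q_\ell$, where $\hat q$ is the top homogeneous part of degree $n$ and $q_\ell$ has degree less than $n$. Since $q_\ell \bv_k$ has degree less than $n + (d-1)$, it is automatically part of $\lot$ relative to the degree $n+d-1$ of the target identity. Thus it suffices to prove the statement when $q$ is homogeneous of degree $n$, and by linearity it suffices to establish the monomial identity
\begin{equation*}
z_1^a z_2^b \bv_k = \lambda_k^b \, z_1^{a+b}\, \bv_k + \lot \qquad \text{in } \C[A],
\end{equation*}
for all $a,b \geq 0$, since then summing $\alpha_{a,b} z_1^a z_2^b$ over $a+b=n$ collects the coefficient $\sum_{a+b=n}\alpha_{a,b}\lambda_k^b = \hat q(1,\lambda_k)$ in front of $z_1^n \bv_k$.

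The main engine is (\ref{eqn:vA}), which I rewrite as
$$z_2 \, \bv_k = \lambda_k z_1 \bv_k + r_k$$
where $r_k$ is a polynomial of degree strictly less than $d$. I prove the monomial identity by induction on $b$. The base case $b=0$ is trivial. For the inductive step, assume the identity for exponent $b-1$ and compute
$$z_1^a z_2^b \bv_k = z_1^a z_2^{b-1}\bigl(\lambda_k z_1 \bv_k + r_k\bigr) = \lambda_k\, z_1^{a+1} z_2^{b-1} \bv_k + z_1^a z_2^{b-1} r_k.$$
The induction hypothesis applied to the first term (with the same total $a+b = n$) gives $\lambda_k z_1^{a+1} z_2^{b-1}\bv_k = \lambda_k^b z_1^n \bv_k + \lot$. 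The second term $z_1^a z_2^{b-1} r_k$ has degree at most $(a+b-1)+(d-1) = n+d-2$, which is strictly less than $n+d-1$, so it is $\lot$. Combining these completes the induction.

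The key to the bookkeeping is that $\lot$ in (\ref{eqn:vA}) means degree $< d$, and the inductive multiplications never quite catch up to the leading degree $n+d-1$ of the target. Although the hint also cites (\ref{eqn:vmult}) and Lemma \ref{lem:cjk}, the inductive argument above appears to need only (\ref{eqn:vA}); those other identities may enter if one prefers to argue via the directional basis $\calC$ by expanding $\hat q$ first as a combination of $\{z_1^r \bv_l\}$ and then using (\ref{eqn:vmult}) to collapse cross terms $\bv_l \bv_k$ ($l\neq k$) into $\lot$. I do not anticipate any serious obstacle: the whole content is careful degree tracking, and the proof should fit in a short paragraph.
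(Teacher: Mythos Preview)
Your proof is correct. The paper does not give a detailed argument, merely citing (\ref{eqn:vmult}), (\ref{eqn:vA}), and Lemma \ref{lem:cjk} and referring to \cite{BM} and \cite{HM}; your induction on the $z_2$-exponent using only (\ref{eqn:vA}) is a clean and self-contained route that does exactly what is needed, and your observation that the other two ingredients would enter only in an alternative basis-expansion argument is accurate.
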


\bigskip

Let $K\subset A$ be compact.  Recall by Proposition \ref{prop:classical} that the limit \eqref{eqn:Tlim} exists for $\calN=\calM(\bv_k)$.  For $k=1,...,d$, we define
 \begin{equation} \label{dirchebc} T(K,\lambda_k):=T(K,\calM(\bv_k))\end{equation} 
 to be the \emph{directional Chebyshev constant} of $K$ for the direction $\lambda_k$. Note this appears to be different than the directional Chebyshev constant $\tau(K,\lambda_k)$ of $K$ for the direction $\lambda_k$ as defined in Definition 5.3 of \cite{HM} (and earlier in \cite{BM}); there it was shown that $-\log \tau(K,\lambda_k)=\rho_K(\lambda_k)$. In fact, $\tau(K,\lambda_k)$ from those papers coincides with our Chebyshev constant $T(K,\calM_{\bv_k}(z_1))$; and we show, in Corollary \ref{cor:oldT}, that $T(K,\lambda_k)=T(K,\calM_{\bv_k}(z_1))$. 

\begin{proposition}\label{prop:Tz1z2}
Let $j_1,j_2\in \{0,\ldots,d-1\}, \ k \in \{1,\ldots,d\}$ and suppose $j_1+j_2\leq d-2$.  Then 
$$
\overline{\underline T}(K,\calM_{z_1^{j_1}z_2^{j_2}}(\bv_k)) = T(K,\lambda_k).
$$
In particular, the limit \eqref{eqn:Tlim} also exists for the class $\calN =\calM_{z_1^{j_1}z_2^{j_2}}(\bv_k)$.  
\end{proposition}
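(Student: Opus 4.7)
The plan is to reduce $\calM_{z_1^{j_1}z_2^{j_2}}(\bv_k)$ to $\calM(\bv_k)$ in two stages, passing through the auxiliary class $\calM_{z_1^m}(\bv_k)$ where $m:=j_1+j_2$. The first stage is a direct algebraic identification of the two classes in $\C[A]$ up to a nonzero scalar; the second uses two applications of Corollary \ref{cor:3}(1) together with a relation that forces $\calM_{z_1^{d-1}}(\bv_k)$ and $\calM(\bv_k)$ to coincide in $\C[A]$ for large degrees.

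For the first stage I would apply Corollary \ref{polyprop} with $q=z_1^{j_1}z_2^{j_2}\bv_k^{n-1}$. Since $q$ is homogeneous of degree $m+(n-1)(d-1)$ with $\hat q(1,\lambda_k)=\lambda_k^{j_2}\bv_k(1,\lambda_k)^{n-1}=\lambda_k^{j_2}$, the corollary yields $z_1^{j_1}z_2^{j_2}\bv_k^n = \lambda_k^{j_2}z_1^{m+(n-1)(d-1)}\bv_k+\lot$ in $\C[A]$. Applying Corollary \ref{polyprop} again with $q=z_1^m\bv_k^{n-1}$ shows that also $z_1^m\bv_k^n=z_1^{m+(n-1)(d-1)}\bv_k+\lot$, so subtracting gives
$$z_1^{j_1}z_2^{j_2}\bv_k^n=\lambda_k^{j_2}z_1^m\bv_k^n+\lot \quad \text{in } \C[A], \quad n\ge 1.$$
Since $\lambda_k\neq 0$, this sets up, for each $n\ge 1$, a bijection between the degree-$(n(d-1)+m)$ members of $\calM_{z_1^{j_1}z_2^{j_2}}(\bv_k)$ and $\calM_{z_1^m}(\bv_k)$ via multiplication by $\lambda_k^{j_2}$ (modulo $P_d$). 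Consequently $T_{n(d-1)+m}(K,\calM_{z_1^{j_1}z_2^{j_2}}(\bv_k))=|\lambda_k^{j_2}|^{1/(n(d-1)+m)}T_{n(d-1)+m}(K,\calM_{z_1^m}(\bv_k))$, and the scalar prefactor tends to $1$, so the $\overline{\underline T}$-values of the two classes agree.

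For the second stage, Corollary \ref{cor:3}(1) with $R_1=1$, $R_2=z_1^m$, $Q=\bv_k$ gives immediately $\overline{\underline T}(K,\calM_{z_1^m}(\bv_k))\le \overline{\underline T}(K,\calM(\bv_k))=T(K,\lambda_k)$. For the reverse inequality I would use \eqref{eqn:z1_cjk}, multiply by $\bv_k^n$, and apply \eqref{eqn:vmult} (so that $\bv_l\bv_k^n\in\lot$ for $l\neq k$) to obtain $z_1^{d-1}\bv_k^n=\bv_k^{n+1}+\lot$ in $\C[A]$ for $n\ge 1$. This means $\calM_{z_1^{d-1}}(\bv_k)$ and $\calM(\bv_k)$ coincide modulo $P_d$ in all degrees $\ge 2(d-1)$, so $\overline{\underline T}(K,\calM_{z_1^{d-1}}(\bv_k))=T(K,\lambda_k)$. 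Because the hypothesis $j_1+j_2\le d-2$ forces $d-1-m\ge 1$, I may factor $z_1^{d-1}=z_1^m\cdot z_1^{d-1-m}$ and invoke Corollary \ref{cor:3}(1) a second time to get $T(K,\lambda_k)=\overline{\underline T}(K,\calM_{z_1^{d-1}}(\bv_k))\le \overline{\underline T}(K,\calM_{z_1^m}(\bv_k))$. Combining, $\overline{\underline T}(K,\calM_{z_1^m}(\bv_k))=T(K,\lambda_k)$, so the limit \eqref{eqn:Tlim} exists for $\calN=\calM_{z_1^m}(\bv_k)$, and the first stage transfers the same conclusion to $\calN=\calM_{z_1^{j_1}z_2^{j_2}}(\bv_k)$.

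The main obstacle is the bookkeeping in the first stage: tracking which products are genuinely $\lot$ in $\C[A]$ and confirming that the identity $z_1^{j_1}z_2^{j_2}\bv_k^n = \lambda_k^{j_2}z_1^m\bv_k^n+\lot$ actually identifies the two classes as equivalence classes in $\C[A]$, so the Chebyshev numbers differ only by the vanishing factor $|\lambda_k^{j_2}|^{1/n}$. The hypothesis $j_1+j_2\le d-2$ enters at exactly one place, namely to guarantee the positive-degree factor $z_1^{d-1-m}$ needed for the lower bound via Corollary \ref{cor:3}(1).
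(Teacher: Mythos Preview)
Your argument is correct, and it follows a genuinely different route from the paper's proof. The paper works by ``filling up'' the monomial $z_1^{j_1}z_2^{j_2}$ with extra powers of $z_2$: setting $j:=j_1$, it uses Lemma~\ref{lem:cjk} to identify $\calM_{z_1^{j}z_2^{d-1-j}}(\bv_k)$ with $\calM_{c_{jk}\bv_k}(\bv_k)$ (since $c_{jk}\neq 0$), and then sandwiches $\overline{\underline T}(K,\calM_{z_1^{j_1}z_2^{j_2}}(\bv_k))$ between $T(K,\calM(\bv_k))$ above (Corollary~\ref{cor:3}(1) with $R_2=z_1^{j_1}z_2^{j_2}$) and $\overline{\underline T}(K,\calM_{z_1^{j}z_2^{d-1-j}}(\bv_k))=T(K,\calM(\bv_k))$ below (Corollary~\ref{cor:3}(1) with $R_2=z_2^{d-1-j_1-j_2}$, then parts (2) and (4)). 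Your approach instead first eliminates $z_2$ altogether via Corollary~\ref{polyprop}, identifying $\calM_{z_1^{j_1}z_2^{j_2}}(\bv_k)$ with $\calM_{\lambda_k^{j_2}z_1^{m}}(\bv_k)$, and then sandwiches $\calM_{z_1^m}(\bv_k)$ between $\calM(\bv_k)$ and $\calM_{z_1^{d-1}}(\bv_k)$, the latter being identified with $\calM_{\bv_k}(\bv_k)$ through \eqref{eqn:z1_cjk} and \eqref{eqn:vmult}. Both approaches ultimately rest on the same algebraic relations \eqref{eqn:vmult}--\eqref{eqn:z1_cjk}; the paper's is a touch shorter since it goes through a single intermediate class, while yours has the conceptual cleanliness of reducing to pure $z_1$-powers before any sandwiching. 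Your observation about where the hypothesis $j_1+j_2\le d-2$ enters (namely, to ensure $z_1^{d-1-m}$ has positive degree for the lower bound) matches the paper's use of the same hypothesis (to ensure $z_2^{d-1-j_1-j_2}$ has positive degree). One small simplification: once you have $\calM_{z_1^{j_1}z_2^{j_2}}(\bv_k)=\calM_{\lambda_k^{j_2}z_1^m}(\bv_k)$ in $\C[A]$, you can invoke Corollary~\ref{cor:3}(2) directly rather than tracking the factor $|\lambda_k^{j_2}|^{1/(n(d-1)+m)}$ by hand.
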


\begin{proof}
Write  $j:=j_1$; then $j_2\leq d-2-j$.  By Lemma \ref{lem:cjk}, 
$$\begin{aligned}
z_1^{j}z_2^{d-1-j}\bv_k^n + \lot &= \bv_k\left(\sum_{l=1}^{d} c_{jl}\bv_l +\lot\right)\bv_k^{n-1} +\lot \\
& = c_{jk}\bv_k^{n+1}+\lot
\end{aligned}$$
where $c_{jk}\neq 0$.  Therefore $\calM_{z_1^{j}z_2^{d-1-j}}(\bv_k) = \calM_{c_{jk}\bv_k}(\bv_k)$. By Corollary \ref{cor:3},  
$$\begin{aligned}
T(K,\calM(\bv_k)) =\overline{\underline T}(K,\calM_{c_{jk}\bv_k}(\bv_k)) 
&=\overline{\underline T}(K,\calM_{z_1^{j}z_2^{d-1-j}}(\bv_k))  \\
& \leq \overline{\underline T}(K,\calM_{z_1^{j_1}z_2^{j_2}}(\bv_k)) \leq  T(K,\calM(\bv_k)).
\end{aligned}$$
\end{proof}

The proof of Proposition \ref{prop:Tz1z2} is based on $c_{jk}\neq 0$ from Lemma \ref{lem:cjk}, which uses the fact that the leading homogeneous part of  $z_1^{j}z_2^{d-1-j}\bv_k$ does not contain a factor of $z_2-\lambda_k z_1$.  One can prove something a bit more general. We omit the details as we will not use this result in the sequel. 
\begin{proposition}
Let $R(z_1,z_2)$ be a nonzero homogeneous polynomial in $\C[A]$.  Suppose  for some $k\in\{0,\ldots,d-1\}$, $z_2-\lambda_k z_1$ is not a factor of $R$.  Then 
$$
\overline{\underline T}(K,\calM_R(\bv_k))=T(K,\lambda_k),
$$
i.e., the limit \eqref{eqn:Tlim} exists for $\calN =\calM_{R}(\bv_k)$ and equals the right-hand side.  \qed
\end{proposition}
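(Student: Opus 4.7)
The plan is to reduce the leading multiplier $R$ to a pure power of $z_1$ modulo lower-order terms in $\C[A]$, and then peel off full multiples of $d-1$ from the exponent of $z_1$ so as to land in a class already handled by Proposition \ref{prop:Tz1z2}.

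First I would note that $R(1,\lambda_k)\neq 0$ is exactly equivalent to the hypothesis: since $R$ is a homogeneous polynomial in $\C[A]$, vanishing at $(1,\lambda_k)$ forces $z_2-\lambda_k z_1$ to be a factor. Applying Corollary \ref{polyprop} with $q=R$ (so $\hat q=R$) then yields
$$R\,\bv_k = R(1,\lambda_k)\,z_1^{\deg R}\,\bv_k + \lot \qquad \text{in } \C[A].$$
Multiplying by $\bv_k^{n-1}$ and checking that the resulting error has degree strictly less than $\deg R + n(d-1)$, one obtains $R\bv_k^n = R(1,\lambda_k)\,z_1^{\deg R}\,\bv_k^n + \lot$ at every admissible degree. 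Hence the classes $\calM_R(\bv_k)$ and $\calM_{R(1,\lambda_k)z_1^{\deg R}}(\bv_k)$ literally consist of the same polynomials at the same degrees, and Corollary \ref{cor:3}(2) gives
$$\overline{\underline T}(K,\calM_R(\bv_k)) = \overline{\underline T}(K,\calM_{z_1^{\deg R}}(\bv_k)).$$

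Next I would write $\deg R = s(d-1) + j$ with $0\leq j\leq d-2$. Corollary \ref{polyprop} applied to $q=\bv_k^s$ gives $\hat q(1,\lambda_k)=1$ and hence $\bv_k^{s+1} = z_1^{s(d-1)}\bv_k + \lot$, from which
$$z_1^{\deg R}\bv_k^n = z_1^j\,z_1^{s(d-1)}\bv_k^n = z_1^j\,\bv_k^{s+n} + \lot.$$
So an element of $\calM_{z_1^{\deg R}}(\bv_k)$ of degree $\deg R + n(d-1)$ is literally an element of $\calM_{z_1^j}(\bv_k)$ of the same degree $j+(s+n)(d-1)$, and the sequence $\{\deg R + n(d-1)\}_{n\geq 0}$ is precisely the final segment $\{j+m(d-1)\}_{m\geq s}$ of the degree sequence along which $T(K,\calM_{z_1^j}(\bv_k))$ is defined. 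Since that full limit exists by Proposition \ref{prop:Tz1z2} (applied with $j_1=j\leq d-2$, $j_2=0$) and equals $T(K,\lambda_k)$, the limsup and liminf along the cofinal subsequence also equal $T(K,\lambda_k)$.

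The main (and essentially only) obstacle is bookkeeping: at each rewrite one must verify that the ``$\lot$'' terms really do have degree strictly less than the putative leading term, so that the classes being equated truly consist of the same polynomials at each degree, and that reducing from degree sequence $\{\deg R + n(d-1)\}$ to $\{j+m(d-1)\}$ is harmless because the full limit already exists in the target class. Once these routine degree checks are made, the result is just the composition of an algebraic reduction with Proposition \ref{prop:Tz1z2}.
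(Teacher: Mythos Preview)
Your proof is correct and is precisely the argument the paper has in mind (the paper omits the details but hints that the mechanism is the same as in Proposition \ref{prop:Tz1z2}, namely that $R\bv_k$ does not degenerate because $z_2-\lambda_kz_1\nmid R$). Your use of Corollary \ref{polyprop} to rewrite $R\bv_k^n=R(1,\lambda_k)\,z_1^{\deg R}\bv_k^n+\lot$ and then the reduction $z_1^{s(d-1)}\bv_k^n=\bv_k^{s+n}+\lot$ to land in $\calM_{z_1^j}(\bv_k)$ along a cofinal tail of degrees is exactly the intended route, and the bookkeeping you flag is routine.
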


\begin{corollary}\label{cor:oldT}
We have $T(K,\lambda_k)=T(K,\calM_{\bv_k}(z_1))$ for each $k\in\{1,\ldots,d\}$. Hence 
$$-\log T(K,\lambda_k)=\rho_K(\lambda_k), \ k\in\{1,\ldots,d\}.$$
\end{corollary}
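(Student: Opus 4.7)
The plan is to reduce the equality $T(K,\lambda_k)=T(K,\calM_{\bv_k}(z_1))$ to Proposition \ref{prop:Tz1z2} by showing that at every sufficiently large degree $N$, the degree-$N$ part of $\calM_{\bv_k}(z_1)$ coincides, as a subset of $\C[A]$, with that of $\calM_{z_1^r}(\bv_k)$ for an appropriate $r\in\{0,\ldots,d-2\}$.

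The starting point is the identity
$$\bv_k^{m+1} = z_1^{m(d-1)}\bv_k + \lot \quad\hbox{in }\C[A], \qquad m\geq 0,$$
which is immediate from Corollary \ref{polyprop} with $q=\bv_k^m$ (since $\hat q(1,\lambda_k)=\bv_k(1,\lambda_k)^m=1$). Next I would fix $N\geq d-1$ and write $N = (m+1)(d-1)+r$ with $0\leq r\leq d-2$, setting $n:=m(d-1)+r$ so that $N = n+(d-1)$. Multiplying the identity above by $z_1^r$ yields
$$\bv_k z_1^n = z_1^r\bv_k^{m+1} + \lot \quad\hbox{in }\C[A],$$
so a polynomial in $\C[A]$ of degree $N$ has leading part $\bv_k z_1^n$ if and only if it has leading part $z_1^r\bv_k^{m+1}$. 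Hence the degree-$N$ elements of $\calM_{\bv_k}(z_1)$ and $\calM_{z_1^r}(\bv_k)$ form the same set, and $T_N(K,\calM_{\bv_k}(z_1))=T_N(K,\calM_{z_1^r}(\bv_k))$.

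By Proposition \ref{prop:Tz1z2} applied with $(j_1,j_2)=(r,0)$, for each $r\in\{0,\ldots,d-2\}$ the limit of $T_N(K,\calM_{z_1^r}(\bv_k))$ along $N\in\deg(\calM_{z_1^r}(\bv_k))$ exists and equals $T(K,\lambda_k)$. As $N$ runs through $\deg(\calM_{\bv_k}(z_1))=\{d-1,d,d+1,\ldots\}$, the residue $r$ defined above cycles through all of $\{0,\ldots,d-2\}$, so the $d-1$ subsequences of $T_N(K,\calM_{\bv_k}(z_1))$ split by residue each converge to the common value $T(K,\lambda_k)$; the full limit therefore exists and equals $T(K,\lambda_k)$, proving the first assertion. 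For the second assertion, the preamble to the corollary records that $\tau(K,\lambda_k)$ from \cite{BM}, \cite{HM} coincides with $T(K,\calM_{\bv_k}(z_1))$ and satisfies $-\log\tau(K,\lambda_k)=\rho_K(\lambda_k)$, so chaining with the first assertion gives $-\log T(K,\lambda_k)=\rho_K(\lambda_k)$.

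The main obstacle is the degree-by-degree identification of the two classes in the first step: one must track leading parts and lower-order terms in $\C[A]$ carefully, and the identification works only after replacing $\calM_{\bv_k}(z_1)$ by the \emph{residue-dependent} companion $\calM_{z_1^r}(\bv_k)$ where $r=(N-(d-1))\bmod(d-1)$. Everything else is a mechanical consequence of Proposition \ref{prop:Tz1z2} and the cited identifications with $\tau(K,\lambda_k)$.
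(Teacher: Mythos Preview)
Your proof is correct and is essentially the same as the paper's: both arguments rest on the identity $z_1^r\bv_k^{m+1}=\bv_k z_1^{n}+\lot$ (the paper's equation \eqref{eqn:nml}, which you derive via Corollary \ref{polyprop}) to identify the degree-$N$ parts of $\calM_{\bv_k}(z_1)$ and $\calM_{z_1^r}(\bv_k)$, and then invoke Proposition \ref{prop:Tz1z2} along the $d-1$ residue classes. The only cosmetic difference is that the paper splits the equality into two inequalities (using the inclusion $\calM(\bv_k)\subseteq\calM_{\bv_k}(z_1)$ for one direction and an $\epsilon$-argument for the other), whereas you obtain both directions at once from the degree-by-degree identification.
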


\begin{proof}
By applying \eqref{eqn:vmult} the requisite number of times,
\begin{equation}\label{eqn:nml}
z_1^l\bv_k^m +\lot = z_1^n\bv_k + \lot \hbox{ whenever }l+(d-1)(m-1)=n.
\end{equation}
From the case $l=0$, $\calM(\bv_k)\subseteq\calM_{\bv_k}(z_1)$.  
Thus $\overline T(K,\calM_{\bv_k}(z_1))\leq T(K,\lambda_k)$.  
It remains to show that $T(K,\lambda_k)\leq \underline T(K,\calM_{\bv_k}(z_1))$.  

Let $\epsilon>0$.  By  Proposition \ref{prop:Tz1z2}, we can choose $N>d$ sufficiently large that 
$$T_n(K,\calM_{z_1^l}(\bv_k))\geq (1-\epsilon)T(K,\lambda_k), \hbox{ for all } n\geq N, \ l\in\{0,\ldots,d-2\}.$$  
Given $n\in \N$,  the division algorithm determines $m,l$ uniquely when $n=(d-1)(m-1)+l$ and $l<d-1$; then 
$$
(1-\epsilon)T(K,\lambda_k) \leq T_{n}(K,\calM_{z_1^l}(\bv_k)) = T_n(K,\calM_{\bv_k}(z_1))
$$
for all $n\geq N$ where equality of the last two quantities follows from \eqref{eqn:nml}.  Finally, take the liminf as $n\to\infty$ and let $\epsilon\to 0$. 
\end{proof}

We construct more polynomial classes using ordered bases.  Precisely, consider polynomials in $\text{span}(\calB)$, where $\calB=\{\bb_j\}_{j=1}^{\infty}$ is linearly independent and listed according to a  \emph{graded ordering}, i.e., $\deg(\bb_j)\leq\deg(\bb_{j+1})$ for all $j$.  

\begin{notation*}\rm 
Given a collection $\calB=\{\bb_j\}_{j=1}^{\infty}$ with a graded ordering, and a polynomial $p\in\text{span}(\calB)$,  $p=\sum_{k=1}^{j}c_k\bb_k$ with $c_j\neq 0$, write $\lot_{\calB}(p)$ to denote an arbitrary polynomial of the form $\sum_{k=1}^{j-1}a_k\bb_k$.  Also, $p+\lot_{\calB}:=p+\lot_{\calB}(p)$.  
\end{notation*}

Recall that the basis $\calC$ is ordered so that $z_1^l\bv_j^n$ comes before  $z_1^l\bv_{j+1}^n$ for each $j$.  Define the classes
$$
\widetilde\calM_l(\bv_j) :=\{p\in \C[V]\colon  p= z_1^l\bv_j^n+\lot_{\calC},\ n\in \N\}, \quad \widetilde\calM(\bv_j) := \bigcup_{l=0}^{d-2}\widetilde\calM_l(\bv_j).
 $$

\begin{lemma}
We have $T(K, \lambda_j) = T(K,\widetilde\calM_l(\bv_j))$ for all $l\in\{0,\ldots,d-2\}$.  Hence $T(K,\lambda_j)=T(K,\widetilde\calM(\bv_j))$.  
\end{lemma}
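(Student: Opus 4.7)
The plan is to sandwich $T(K, \widetilde\calM_l(\bv_j))$ between $T(K, \lambda_j)$ from both sides, using the already-established identities $T(K, \calM_{z_1^l}(\bv_j)) = T(K, \lambda_j)$ (Proposition \ref{prop:Tz1z2}) and $T(K, \calM_{\bv_j}(z_1)) = T(K, \lambda_j)$ (Corollary \ref{cor:oldT}).

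For the easy direction I would observe the inclusion $\calM_{z_1^l}(\bv_j) \subseteq \widetilde\calM_l(\bv_j)$: if $p = z_1^l\bv_j^n + r$ with $\deg(r) < n(d-1)+l$, then expanding $r$ in the $\calC$-basis produces a combination of basis elements all coming before $z_1^l \bv_j^n$, so $p = z_1^l\bv_j^n + \lot_\calC$. Since a larger class produces a smaller Chebyshev infimum at each degree, $\overline T(K, \widetilde\calM_l(\bv_j)) \leq \overline T(K, \calM_{z_1^l}(\bv_j)) = T(K, \lambda_j)$.

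For the reverse direction, my strategy is to multiply by $\bv_j$ to land in the class $\calM_{\bv_j}(z_1)$. Take any $p \in \widetilde\calM_l(\bv_j)$ of degree $N = n(d-1)+l$. Since the $\calC$-basis elements of degree exactly $N$ are $z_1^l\bv_1^n, \ldots, z_1^l\bv_d^n$ (in that order), the top-degree homogeneous part of $p$ has the form
$$\hat p = z_1^l\bv_j^n + \sum_{k=1}^{j-1} a_k\, z_1^l\bv_k^n.$$
Using $\bv_k(1,\lambda_j) = \delta_{kj}$, this gives $\hat p(1,\lambda_j) = 1$, so Corollary \ref{polyprop} yields $p\cdot\bv_j = z_1^N \bv_j + \lot$; that is, $p\bv_j \in \calM_{\bv_j}(z_1)$ at degree $N+(d-1)$. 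Specializing $p$ to a Chebyshev polynomial of $\widetilde\calM_l(\bv_j)$ at degree $N$ and applying submultiplicativity $\|p\bv_j\|_K \leq \|p\|_K \|\bv_j\|_K$ gives
$$T_{N+(d-1)}(K, \calM_{\bv_j}(z_1))^{N+(d-1)} \leq T_N(K, \widetilde\calM_l(\bv_j))^N \|\bv_j\|_K.$$
Taking $N$-th roots and passing to $\liminf$ as $n\to\infty$, the left side tends to $T(K, \lambda_j)$ by Corollary \ref{cor:oldT} while $\|\bv_j\|_K^{1/N}\to 1$, yielding $T(K, \lambda_j) \leq \underline T(K, \widetilde\calM_l(\bv_j))$.

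The two bounds combine to give $T(K, \widetilde\calM_l(\bv_j)) = T(K, \lambda_j)$. The claim for $\widetilde\calM(\bv_j)$ is then immediate: the degree sets $\{n(d-1)+l : n\in\N\}$ for $l = 0,\ldots, d-2$ are pairwise disjoint and partition $\deg(\widetilde\calM(\bv_j))$, so at each sufficiently large degree $N$ one has $T_N(K, \widetilde\calM(\bv_j)) = T_N(K, \widetilde\calM_l(\bv_j))$ for the unique $l\equiv N \pmod{d-1}$, and all these tend to $T(K, \lambda_j)$. The main subtlety to handle is that $\widetilde\calM_l(\bv_j)$ is strictly larger than $\calM_{z_1^l}(\bv_j)$ at the top-degree level (the extra $z_1^l\bv_k^n$ terms for $k<j$ are allowed), precluding a direct appeal to Proposition \ref{prop:Tz1z2}; the resolution is that these extra terms are annihilated by the orthogonality $\bv_k(1,\lambda_j) = \delta_{kj}$ after multiplication by $\bv_j$, so no asymptotic information is lost.
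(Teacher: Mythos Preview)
Your proof is correct and follows essentially the same approach as the paper: the inclusion $\calM_{z_1^l}(\bv_j)\subseteq\widetilde\calM_l(\bv_j)$ gives one inequality, and multiplication by $\bv_j$ (killing the extra $z_1^l\bv_k^n$ terms via the orthogonality $\bv_k(1,\lambda_j)=\delta_{kj}$) gives the other. The only cosmetic difference is that the paper computes $p\bv_j = z_1^l\bv_j^{n+1}+\lot\in\calM_{z_1^l}(\bv_j)$ directly from \eqref{eqn:vmult} and then invokes Proposition~\ref{prop:Tz1z2}, whereas you package the multiplication via Corollary~\ref{polyprop} to land in $\calM_{\bv_j}(z_1)$ and invoke Corollary~\ref{cor:oldT}; since $z_1^l\bv_j^{n+1}+\lot = z_1^N\bv_j+\lot$ by \eqref{eqn:nml}, these are the same computation viewed through two equivalent normalizations.
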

\begin{proof}
Fix $l$.  Clearly $\calM_{z_1^l}(\bv_j)\subseteq \widetilde\calM_l(\bv_j)$, so $T(K, \calM_{z_1^l}(\bv_j)) \geq \overline{\underline T}(K,\widetilde\calM(\bv_j))$.

For the reverse inequality, if $p\in\widetilde\calM_l(\bv_j)$, then
$p=z_1^l\bv_j^n+\lot_{\calC}$ for some $n\in \N$ and  
$$
(z_1^l\bv_j^n+\lot_{\calC})\bv_j = (z_1^l\bv_j^n + \sum_{k\neq j}c_kz_1^l\bv_k^n + \lot)\bv_j = z_1^l\bv_j^{n+1} + \lot\in\calM_{z_1^l}(\bv_j).   
$$
Hence $p\mapsto p\bv_j$ maps $\widetilde\calM_l(\bv_j)$  to  $\calM_{z_1^l}(\bv_j)$. Applying Lemmas \ref{lem:Tinf}--\ref{lem:Tsup},  $T(K, \calM_{z_1^l}(\bv_j)) \leq \overline{\underline T}(K,\widetilde\calM(\bv_j))$. 

Thus $\overline{\underline T}(K,\widetilde\calM(\bv_j))=T(K, \calM_{z_1^l}(\bv_j))$, and by Proposition  \ref{prop:Tz1z2} we get the first statement.  The second statement follows from the first.
\end{proof}

Another basis of $ \C[A]$ is the monomial basis $\calS$ consisting of all monomials of degree $\leq d-2$, and $z_1^{n},  z_1^{n-1}z_2,\ldots,z_1^{n-d+1}z_2^{d-1}$ in degrees $n\geq d-1$.  We use the \emph{grevlex ordering}, where we order by increasing degree, and by increasing powers of $z_2$ within the same degree.  
For each $k\in\{0,\ldots,d-2\}$, define  $$\calZ(k) = \{ p\in \C[V]\colon p(z_1,z_2)=z_2^kz_1^n+\lot_{\calS} \}.$$

With this ordering, $\calZ(0) =\calM(z_1)$ so $T(K,\calZ(0))$ exists  by Proposition \ref{prop:classical}.  Applying Lemmas \ref{lem:Tinf}--\ref{lem:Tsup} to the map $p\mapsto z_2p$, we also have monotonicity:
$$\overline{\underline T}(K,\calZ(k-1))\geq  \overline{\underline T}(K,\calZ(k)) \hbox{ for all } k\geq 1.$$  

It turns out that $T(K,\calZ(k))$ exists for all $k\in\{0,\ldots,d-2\}$ and coincides with the $(k+1)$-st largest directional Chebyshev constant.  For the moment we show the following.

\begin{proposition}\label{prop:TZ}
Suppose the directions $\lambda_1,\ldots,\lambda_d$ are labelled so that 
$$T(K,\lambda_1)\geq T(K,\lambda_2) \geq\cdots\geq T(K,\lambda_d) .$$ Then  
$T(K,\calZ(0))=T(K,\lambda_1)$ and  $\overline{\underline T}(K,\calZ(k))\leq T(K,\lambda_{k+1})$ for each $k\in\{1,\ldots,d-1\}$.  
\end{proposition}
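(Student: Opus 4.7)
The argument rests on a Lagrange-interpolation-type identity that expresses $z_1^n R_k$ (where $R_k := \prod_{i=1}^k(z_2-\lambda_i z_1)$) as a linear combination of the basic polynomials $z_1^{m}\bv_j$ for $j>k$. Combining this with near-Chebyshev polynomials for the classes $\calM_{\bv_j}(z_1)$---whose Chebyshev constants are $T(K,\lambda_j)$ by Corollary \ref{cor:oldT}---will yield the upper bound on $\overline{\underline T}(K,\calZ(k))$, and the matching lower bound for $k=0$ will come from Bernstein--Walsh applied to $\calM(z_1)=\calZ(0)$.

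\textbf{The Lagrange identity.} Set $C_{k,j}:=\prod_{i=1}^k(\lambda_j-\lambda_i)$, so $C_{k,j}=0$ for $j\le k$ and $C_{k,j}\ne 0$ otherwise. I will check that the polynomials $z_1^{d-1-k}R_k$ and $\sum_{j>k}C_{k,j}\bv_j$, both homogeneous of degree $d-1$ in $\C[z_1,z_2]$, coincide: dehomogenizing at $z_1=1$ yields two polynomials of degree $\le d-1$ in $z_2$ that agree at all $d$ points $\lambda_m$ (both equal $C_{k,m}$ for $m>k$, and $0$ for $m\le k$). Multiplying through by $z_1^{n-d+1+k}$ (valid for $n\ge d-1$) then gives
\begin{equation*}
z_1^n R_k \;=\; \sum_{j>k} C_{k,j}\, z_1^{n-d+1+k}\bv_j \qquad\text{in }\C[z_1,z_2],\text{ hence in } \C[A].
\end{equation*}
Expanding $R_k=\sum_{i=0}^k(-1)^i e_i(\lambda_1,\ldots,\lambda_k)\, z_2^{k-i}z_1^i$ shows that $z_1^n R_k$ carries only $z_2$-powers $\le k\le d-1$, so it sits directly in the $\calS$-basis; its leading grevlex monomial in degree $n+k$ is $z_1^n z_2^k$ with coefficient $1$, so $z_1^n R_k\in\calZ(k)$.

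\textbf{Putting it together.} For each $j>k$ I choose a near-Chebyshev polynomial $p_j\in\calM_{\bv_j}(z_1)$ of degree $n+k$, of the form $p_j=z_1^{n-d+1+k}\bv_j+\lot$ with $\|p_j\|_K^{1/(n+k)}\to T(K,\lambda_j)$ (Corollary \ref{cor:oldT}). Setting $P_n:=\sum_{j>k}C_{k,j}p_j$, the identity forces $P_n=z_1^nR_k+\lot_\calS$, so $P_n\in\calZ(k)$ of degree $n+k$. The triangle inequality gives $\|P_n\|_K\le (d-k)\max_{j>k}|C_{k,j}|\cdot\max_{j>k}\|p_j\|_K$, and taking $(n+k)$-th roots,
\[
\limsup_{n\to\infty}\|P_n\|_K^{1/(n+k)}\;\le\;\max_{j>k}T(K,\lambda_j)\;=\;T(K,\lambda_{k+1}),
\]
proving $\overline T(K,\calZ(k))\le T(K,\lambda_{k+1})$ and hence (since $\underline T\le\overline T$) the assertion for $k\ge 1$. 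The same construction at $k=0$ yields $\overline T(K,\calZ(0))\le T(K,\lambda_1)$. For the reverse inequality, any $p\in\calM(z_1)$ of degree $n$ satisfies $\hat p=z_1^n$, so $\hat p(1,\lambda_j)=1$ for every $j$; applying Bernstein--Walsh ($\tfrac{1}{n}\log|p/\|p\|_K|\le V_K^*$), taking $|z_1|\to\infty$ along branch $A(j)$, subtracting $\log|z_1|$, and invoking (\ref{robpoly}) together with $\rho_K(\lambda_j)=-\log T(K,\lambda_j)$ from Corollary \ref{cor:oldT} gives $\|p\|_K^{1/n}\ge T(K,\lambda_j)$ for each $j$, hence $\ge T(K,\lambda_1)$. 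Combined with existence of $T(K,\calZ(0))$ from Proposition \ref{prop:classical}, this yields $T(K,\calZ(0))=T(K,\lambda_1)$.

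\textbf{Main obstacle.} The principal subtlety will be verifying that $P_n$ really lies in $\calZ(k)$: in degree $n+k$, every grevlex-higher monomial $z_1^{n+k-k'}z_2^{k'}$ (with $k'>k$) must have coefficient zero, while the coefficient of $z_1^n z_2^k$ must be $1$. The Lagrange identity pins the degree-$(n+k)$ homogeneous part of $P_n$ to $z_1^n R_k$ exactly, and the explicit expansion of $R_k$ carries no $z_2$-powers above $k$; this handles both requirements, after which the remaining estimates are routine.
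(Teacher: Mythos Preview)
Your argument is correct. The upper bound is essentially the paper's approach: both compute $z_1^n\prod_{i\le k}(z_2-\lambda_i z_1)$ as $\sum_{j>k}C_{k,j}\,z_1^{n+k-d+1}\bv_j$ and then feed in (near-)Chebyshev polynomials from the classes $\calM_{\bv_j}(z_1)$. Your Lagrange-interpolation derivation of this identity is a bit cleaner than the paper's, which routes the computation through \eqref{eqn:z1_cjk} and Corollary~\ref{polyprop} and only obtains it modulo $\lot$ in $\C[A]$; you observe it is an exact identity already in $\C[z_1,z_2]$.

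The one genuine difference is the lower bound $T(K,\calZ(0))\ge T(K,\lambda_1)$. The paper stays within the Chebyshev-constant machinery: by Corollary~\ref{cor:3}(1) (applied via $p\mapsto \bv_1 p$) one has $T(K,\calM_{\bv_1}(z_1))\le T(K,\calM(z_1))=T(K,\calZ(0))$, and Corollary~\ref{cor:oldT} identifies the left side with $T(K,\lambda_1)$. You instead invoke pluripotential theory: Bernstein--Walsh for $p\in\calM(z_1)$, passage to the Robin constant via \eqref{robpoly}, and the identification $\rho_K(\lambda_j)=-\log T(K,\lambda_j)$ from Corollary~\ref{cor:oldT}. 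Both are short; the paper's route is self-contained within Section~4, while yours ties the inequality directly to the extremal function $V_K^*$, which fits nicely with the way these constants are eventually used in Section~5.
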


\begin{proof}
We first do the following computation in $ \C[A]$:
$$\begin{aligned}
\Pi^{(k)} z_1^{d-1}:=\left(\prod_{j=1}^{k} (z_2-\lambda_j z_1)\right)z_1^{d-1} &=
\left(\prod_{j=1}^{k} (z_2-\lambda_j z_1)\right) \left(\sum_{l=1}^d\bv_l +\lot \right) \\
 &= \sum_{l=k+1}^d \left(\prod_{j=1}^{k}(\lambda_l-\lambda_j)\right)z_1^{k}\bv_l +\lot \\
&:= \sum_{l=k+1}^d c_{kl}z_1^{k}\bv_l + \lot.
\end{aligned}$$
The equality in the second line uses Corollary \ref{polyprop}. 
Now $z_2^kz_1^n+\lot_{\calS} = \Pi^{(k)}z_1^n+\lot_{\calS}$ by expanding $\Pi^{(k)}$; using this and the previous calculation,
\begin{equation}\label{eqn:ap}
\begin{aligned}\calZ(k)&=\left\{ \Pi^{(k)}z_1^n+\lot_{\calS}\right\} \\ 
&= \left\{ \sum_{l=k+1}^d c_{kl}z_1^{n+k-d+1}\bv_l + \lot_{\calS} \right\}  \\
&\supseteq \left\{ \sum_{l=k+1}^d c_{kl}z_1^{n+k-d+1}\bv_l + \lot \right\} = \calM_{\sum_{k+1}^d c_{kl}\bv_l}(z_1).  \end{aligned}\end{equation} 
Using Corollary \ref{cor:3}, Lemma \ref{lem:Tsum} and definition (\ref{dirchebc}), 
$$\overline{\underline T}(K,\calZ(k))\leq T(K,\calM_{\sum_{k+1}^d c_{kl}\bv_l}(z_1))   \leq \max_{l\geq k+1}\{ T(K,\lambda_l)\} = T(K,\lambda_{k+1}).$$ 
When $k=0$ we also have  
$$
T(K,\lambda_1)=T(K,\calM_{\bv_1}(z_1))\leq T(K,\calM(z_1)) = T(K,\calZ(0))
$$
from Corollary \ref{cor:oldT}. Thus $T(K,\calZ(0)) =T(K,\lambda_1)$.
\end{proof}

We recall the construction of transfinite diameter of a compact set $K$ with respect 
to a collection of polynomials $\calB=\{\bb_j\}_{j=1}^{\infty}$ arranged in a graded ordering.  

Given $\{\zeta_1,\ldots,\zeta_n\}\subset K$ define $\VDM_{\calB}(\zeta_1,\ldots,\zeta_n):=\det\bigl[\bb_j(\zeta_k)\bigr]_{j,k=1}^n$ and 
$$
V_n=V_n(K,\calB):=\sup\{|\VDM_{\calB}(\zeta_1,\ldots,\zeta_n)|\colon \zeta_j\in K,\ \forall j\}. 
$$
For each $n\in \N$, let $m_n$ be the number of polynomials in $\calB$ of degree at most $n$ and let $l_n:=\sum_{j=1}^{m_n} \deg(\bb_j)$ be the sum of the degrees. The \emph{transfinite diameter of $K$ with respect to $\calB$} is given by 
$$
d_{\calB}(K):= \limsup_{n\to\infty} \left(V_{m_n}\right)^{1/l_n}.
$$

Using $\calB$ we also define Chebyshev constants 
$$
\tau_n=\tau_n(K,\calB):= \inf\{\|p\|_K^{1/\deg(p)}\colon p\in\text{span}(\calB),\ p(z) = \bb_n(z)+\lot_{\calB}\}.
$$
The inequality 
\begin{equation}\label{eqn:vn<tn}
\frac{V_{n}}{V_{n-1}} \leq n\tau_n^{deg(\bb_n)} \hbox{ for all }n\in \N  
\end{equation}
is well-known (cf., \cite{Z}) and will be used in what follows.

For the rest of this section $\{\bb_j\}_{j=1}^{\infty}$  will be the (grevlex ordered) monomial basis $\calS$.  Let $N_0\in\N$ be such that $z_1^{d-1}=\bb_{N_0}$; then 
$z_1^{d-1+m-k}z_2^k = \bb_{N_0+md+k}$  for each  $m\in\N$.  Instead of $m$, we can relabel these monomials using the parameter $n:=d-1+m$, which is their total degree.  Setting $M:=N_0-d^2+d$, a calculation yields
$$
z_1^{n-k}z_2^k = \bb_{M+nd+k}, \quad  n\in\N, \  n\geq d.
$$
It follows that $\tau_{M+nd+k}^n=  T_n(K,\calZ(k))^n$, so 
\begin{equation}\label{eqn:bartau}
\limsup_{n\to\infty} \tau_{M+k+nd} = \overline T(K,\calZ(k))=:\overline\tau(M+k+nd).\end{equation}
(The notation on the right will be useful shortly; it associates the lim sup with the indices of those monomials of the form $z_1^{n-k}z_2^k$.)

\bigskip

In fact, the quantities  $T(K,\calZ(k))$ exist ($\limsup$ in \eqref{eqn:bartau} may be replaced by $\lim$) and we have a formula for the transfinite diameter in terms of these quantities.

\begin{theorem}\label{thm:ds=dc}
Let $K\subset A$ be compact, and suppose the directions $\lambda_1,\ldots,\lambda_d$ are labelled so that $T(K,\lambda_k)\geq T(K,\lambda_{k+1})$ for all $k\in\{1,\ldots, d-2\}$.  Then 
\begin{equation}\label{eqn:ds=dc}
\left(\prod_{k=1}^d  T(K,\calZ(k-1))\right)^{1/d} =  d_{\calS}(K) = d_{\calC}(K) = \left(\prod_{k=1}^d T(K,\lambda_k)\right)^{1/d},
\end{equation}
and hence $T(K,\calZ(k-1))=T(K,\lambda_k)$ for all $k\in\{1,\ldots,d\}$.
\end{theorem}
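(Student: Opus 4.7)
The plan is to compute the transfinite diameter of $K$ in two different ways — via the directional basis $\calC$ (naturally producing $\prod_k T(K,\lambda_k)$) and via the standard monomial basis $\calS$ (naturally producing $\prod_k T(K,\calZ(k-1))$) — and then combine the resulting equalities with the pointwise bounds $\overline T(K,\calZ(k-1))\leq T(K,\lambda_k)$ from Proposition \ref{prop:TZ} to force equality factor by factor.

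First I would establish $d_\calS(K)=d_\calC(K)$. The change of basis from $\calS$ to $\calC$ is block upper triangular with respect to the grading, with $d\times d$ diagonal blocks in each degree $m\geq d-1$. Since iterating \eqref{eqn:vmult} shows that the leading homogeneous part of $z_1^r\bv_k^q$ is $z_1^{m-(d-1)}\bv_k$ (where $m=r+(d-1)q$), each such diagonal block is essentially the degree-$(d-1)$ transition governed by the constants $c_{jk}$ of Lemma \ref{lem:cjk} — an invertible matrix whose determinant is a nonzero constant depending only on $d$ and the $\lambda_k$'s. Hence $|\det M_n|$ grows at most exponentially in $n$, while $l_n\sim dn^2/2$, so $|\det M_n|^{1/l_n}\to 1$. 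Next, iterating \eqref{eqn:vn<tn} for $\calS$ and substituting $\tau_{M+nd+k}=T_n(K,\calZ(k))$, a weighted-geometric-mean computation (for each fixed $k$ the degrees $n'\leq n$ contribute total weight $\sim 1/d$) yields
$$d_\calS(K)\leq\Bigl(\prod_{k=1}^d\overline T(K,\calZ(k-1))\Bigr)^{1/d}.$$

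The main obstacle is the matching lower bound
$$d_\calC(K)\geq\Bigl(\prod_{k=1}^d T(K,\lambda_k)\Bigr)^{1/d}.$$
I would obtain this by a Fekete-point argument in the style of the classical Zaharjuta--Bloom--Levenberg identification of transfinite diameter with directional Chebyshev products: Fekete points $\zeta_1,\ldots,\zeta_{m_n}\in K$ maximizing $V_{m_n}(K,\calC)$ yield, via standard Vandermonde/Lagrange polynomial techniques, for each basis element $\bb_j=z_1^{r}\bv_k^q$ a polynomial in the class $\widetilde\calM_r(\bv_k)$ of degree $\deg(\bb_j)$ whose $K$-norm is controlled by a ratio of successive $V$'s. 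Taking logarithms, summing over $j$, and invoking the lemma above Theorem \ref{thm:ds=dc} (which gives $T(K,\widetilde\calM_r(\bv_k))=T(K,\lambda_k)$) produces the desired inequality, with each of the $d$ directions $\lambda_k$ contributing weight $1/d$ because the block structure of $\calC$ places exactly one polynomial in direction $k$ in each degree.

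Finally, chaining the estimates gives
$$\Bigl(\prod_{k=1}^d T(K,\lambda_k)\Bigr)^{1/d}=d_\calC(K)=d_\calS(K)\leq\Bigl(\prod_{k=1}^d\overline T(K,\calZ(k-1))\Bigr)^{1/d}\leq\Bigl(\prod_{k=1}^d T(K,\lambda_k)\Bigr)^{1/d},$$
the last inequality being Proposition \ref{prop:TZ}. Equality throughout, combined with the pointwise inequalities factor by factor, forces $\overline T(K,\calZ(k-1))=T(K,\lambda_k)$ for each $k$. Existence of the limit $T(K,\calZ(k-1))$ (i.e.\ $\underline T=\overline T$) then follows from a subsequence argument using that transfinite diameter is a true limit: the same iteration of \eqref{eqn:vn<tn} applied blockwise gives $d_\calS(K)\leq\liminf_n\prod_{k=0}^{d-1}T_n(K,\calZ(k))^{1/d}$, and any subsequential value of $T_n(K,\calZ(k_0-1))$ strictly below $T(K,\lambda_{k_0})$ would push the corresponding product strictly below $\prod_k T(K,\lambda_k)$, contradicting this $\liminf$ bound.
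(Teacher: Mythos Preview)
Your route to the equalities in \eqref{eqn:ds=dc} with $\overline T$ in place of $T$ is essentially the paper's (Proposition~\ref{414}): the paper cites $d_{\calS}(K)=d_{\calC}(K)=\bigl(\prod_k T(K,\lambda_k)\bigr)^{1/d}$ from \cite{M} rather than redoing the change-of-basis and Fekete arguments you sketch, then iterates \eqref{eqn:vn<tn} exactly as you propose to get $d(K)\leq\bigl(\prod_k\overline T(K,\calZ(k-1))\bigr)^{1/d}$, and closes the loop with Proposition~\ref{prop:TZ} to obtain $\overline T(K,\calZ(k-1))=T(K,\lambda_k)$.

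The genuine gap is in your final paragraph, the existence of the limit $T(K,\calZ(k))$. The ``blockwise'' inequality
\[
d_{\calS}(K)\ \leq\ \liminf_{n\to\infty}\ \prod_{k=0}^{d-1} T_n(K,\calZ(k))^{1/d}
\]
does \emph{not} follow from iterating \eqref{eqn:vn<tn}. What that iteration actually yields is
\[
(V_{m_n})^{1/l_n}\ \leq\ C^{1/l_n}\Bigl(\prod_{n'\leq n}\prod_{k} T_{n'}(K,\calZ(k))^{n'}\Bigr)^{1/l_n},
\]
a weighted (Ces\`aro-type) geometric mean over \emph{all} degrees $n'\leq n$, not the product at a single degree $n$. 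A subsequence $n_j$ along which $T_{n_j}(K,\calZ(k_0))$ dips below $T(K,\lambda_{k_0+1})$ has asymptotically negligible weight in this mean and produces no contradiction. The paper needs a substantially more delicate argument: it first upgrades the Ces\`aro-mean bound to an honest limit for the partial products $\bigl(\prod_{\nu\leq m}\tau_{N+d\nu}^{\nu}\bigr)^{1/\Sigma_m}$ (Proposition~\ref{prop:13}), then shows via this limit that $T_m$ cannot stay below $\overline T-\delta$ over an interval of indices of length $\theta m$ (the $\bP(m,\delta,\theta)$ lemma and Corollary~\ref{cor:15}), and finally exploits the elementary submultiplicativity $z_1^{s}t_{m}\in\calZ(k)$, giving $T_{m+s}^{m+s}\leq \|z_1\|_K^{s}\,T_m^{m}$, to force a contradiction from $\underline T<\overline T$ (Proposition~\ref{prop:infT=supT}). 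Your one-line subsequence argument does not substitute for this machinery.
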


We first prove the following.

\begin{proposition} \label{414}
Theorem \ref{thm:ds=dc} holds with $\overline T(K,\calZ(k))$ in place of $T(K,\calZ(k))$ for each $k$.  
\end{proposition}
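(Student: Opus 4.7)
The plan is to prove the four-way equality
$$
\Bigl(\prod_{k=1}^d \overline T(K, \calZ(k-1))\Bigr)^{1/d} = d_\calS(K) = d_\calC(K) = \Bigl(\prod_{k=1}^d T(K, \lambda_k)\Bigr)^{1/d},
$$
after which the individual identities $\overline T(K, \calZ(k-1)) = T(K, \lambda_k)$ will follow term-by-term from Proposition \ref{prop:TZ} once the products agree.

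First I would obtain the upper bounds
$$
d_\calS(K) \leq \Bigl(\prod_{k=0}^{d-1} \overline T(K, \calZ(k))\Bigr)^{1/d}, \qquad d_\calC(K) \leq \Bigl(\prod_{k=1}^d T(K, \lambda_k)\Bigr)^{1/d},
$$
from the inequality \eqref{eqn:vn<tn}. Telescoping yields $V_{m_N} \leq \prod_{j=1}^{m_N} j\,\tau_j^{\deg \bb_j}$, and since $\log\prod_{j\leq m_N} j = O(N\log N)$ while $l_N \sim dN^2/2$, the factor $\prod j$ vanishes under the exponent $1/l_N$. For $\calS$, the $d$ basis elements of degree $n$ have $\tau$-values $T_n(K, \calZ(k))$ for $k = 0, \ldots, d-1$, with limsup $\overline T(K, \calZ(k))$; a Cesaro-type computation (using $\sum_{n \leq N} n \sim N^2/2$) produces the first bound. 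For $\calC$, the $\tau$-values of the $d$ basis elements of degree $n$ converge, by the lemma preceding Proposition \ref{prop:TZ}, to $T(K, \lambda_k)$ --- an actual limit, not just limsup --- giving the second bound. The equality $d_\calS(K) = d_\calC(K)$ then follows from the fact that $\calS$ and $\calC$ have matching degree counts and their transition matrix is block-lower-triangular by degree with nonsingular diagonal blocks (traceable to Lemma \ref{lem:cjk}), so the two Vandermondes differ by a multiplicative constant whose $1/l_N$-th root tends to $1$.

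The main obstacle will be the matching lower bound $d_\calC(K) \geq \bigl(\prod_k T(K, \lambda_k)\bigr)^{1/d}$. The plan is to build near-extremal point configurations for $\VDM_\calC$ by choosing, for each direction $k$, a block of points of $K$ lying near the asymptotic branch $D_k$ of Proposition \ref{hmstuff} and which are near-Chebyshev for the class $\calM(\bv_k)$. By Corollary \ref{polyprop}, $\bv_k$ acts as a direction selector: $q(z)\bv_k(z) = z_1^{\deg q}\hat q(1, \lambda_k)\bv_k(z) + \lot$. This should let $\VDM_\calC$ decompose, up to lower-order perturbation that is negligible under $(\cdot)^{1/l_N}$, into $d$ blocks indexed by direction, whose extremal determinants grow like products of powers of $T(K, \lambda_k)$. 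Chaining the three bounds with Proposition \ref{prop:TZ} yields
$$
\Bigl(\prod_{k=1}^d T(K, \lambda_k)\Bigr)^{1/d} \leq d_\calC(K) = d_\calS(K) \leq \Bigl(\prod_{k=1}^d \overline T(K, \calZ(k-1))\Bigr)^{1/d} \leq \Bigl(\prod_{k=1}^d T(K, \lambda_k)\Bigr)^{1/d},
$$
forcing equality throughout. The term-by-term inequality from Proposition \ref{prop:TZ} then forces equality in each factor.
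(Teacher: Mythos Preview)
Your overall chain of inequalities is the same one the paper uses, and the pieces the paper actually proves here --- the telescoping upper bound $d_{\calS}(K)\leq\bigl(\prod_{k}\overline T(K,\calZ(k-1))\bigr)^{1/d}$ via \eqref{eqn:vn<tn} together with Proposition~\ref{prop:TZ} to close the loop --- you have outlined correctly. The key difference is that the paper does \emph{not} prove the second and third equalities $d_{\calS}(K)=d_{\calC}(K)=\bigl(\prod_k T(K,\lambda_k)\bigr)^{1/d}$; it simply cites \cite{M}, Theorem~5.7 and Corollary~5.14, and calls the common value $d(K)$. The entire new content of Proposition~\ref{414} is the single equality $d(K)=\bigl(\prod_k\overline T(K,\calZ(k-1))\bigr)^{1/d}$, which you obtain by the same upper-bound-plus-Proposition~\ref{prop:TZ} squeeze.

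Where your proposal has a genuine gap is in the attempt to reprove the cited lower bound $d_{\calC}(K)\geq\bigl(\prod_k T(K,\lambda_k)\bigr)^{1/d}$. Your plan is to pick points of $K$ ``lying near the asymptotic branch $D_k$'' of Proposition~\ref{hmstuff}. But the $D_k$ are defined as the components of $A\setminus\bar B$ for a \emph{large} ball $B$, and $K$ is compact; once $R$ exceeds the radius of $K$, every $D_k$ is disjoint from $K$. So there is no reason any point of $K$ is ``near'' a given $D_k$, and indeed $K$ could sit entirely over a disk in the $z_1$-plane where the $d$ sheets are thoroughly intertwined (or even contain ramification points). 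The decoupling of $\VDM_{\calC}$ into directional blocks that you hope Corollary~\ref{polyprop} will provide therefore has no geometric foothold in $K$. The argument in \cite{M} goes by a different route (a Zaharjuta-style analysis relating Fekete point ratios to Chebyshev constants for the basis $\calC$), and reproducing it is not a short exercise. For the purposes of this proposition you should, as the paper does, take $d_{\calS}(K)=d_{\calC}(K)=\bigl(\prod_k T(K,\lambda_k)\bigr)^{1/d}$ as established in \cite{M} and focus on the one remaining equality.
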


\begin{proof}
The second and third equalities of \eqref{eqn:ds=dc} have been proved in \cite{M}, Theorem 5.7 and Corollary 5.14.  Let us write the common quantity given by these expressions as $d(K)$.  

We will use Proposition \ref{prop:TZ} to show that  
\begin{equation}\label{eqn:sT=d} 
d(K) = \left(\prod_{k=1}^d \overline T(K,\calZ(k-1))\right)^{1/d}. \end{equation}

Let $\epsilon>0$.  Choose a large $n_0\in\N$ such that for all $k\in\{0,\ldots,d-1\}$,  
$$\tau_{M+k+nd}\leq (1+\epsilon)\stau(M+k+nd)  \hbox{  whenever }  n\geq n_0$$ (recall  \eqref{eqn:bartau} for the notation).  Let $N:=M+n_0$; then  
$$\stau(n)=T(K,\calZ(k)) \hbox{ whenever }n\geq N \hbox{ and }   n-N\equiv k \mod d.$$ 
For convenience  write $\alpha_j=\deg(\bb_j)$; then 
\begin{equation}\label{eqn:alpha_j} \alpha_j= \frac{j}{d} +O(1).  \end{equation}

Now let  $n\geq N$, and write $n=N+dm+k$ for some nonnegative integers $m,k$ with $k<d$.  Bound $V_{n}$ from above using \eqref{eqn:vn<tn} and a telescoping product.   We have 
\begin{equation}\label{eqn:tp} \begin{aligned}
V_n=V_{N-1}\frac{V_{N}}{V_{N-1}}\cdots\frac{V_n}{V_{n-1}}&\leq V_{N-1} \frac{n!}{N!} \prod_{\nu=0}^{n-N}  \tau_{\nu+N}^{\alpha_{\nu+N}} \\ 
&   \\
&\leq V_{N-1} \frac{n!}{N!} \prod_{\nu=0}^{dm} (1+\epsilon)^{\alpha_{\nu+N}}\stau(\nu+N)^{\alpha_{\nu+N}} 
 \prod_{\nu=dm+1}^{dm+k} C^{\alpha_{\nu+N}} \\ 
\end{aligned}\end{equation}
where we use some fixed constant $C\geq(1+\epsilon)\max\{\overline T(K,\calZ(0)),\ldots,\overline T(K,\calZ(d-1))  \}$  to estimate the last $k$ terms of the product.

Using \eqref{eqn:alpha_j}, a calculation gives
$$
\sum_{\nu=0}^{dm} \alpha_{\nu+N} = \frac{dm^2}{2} + O(m),\quad  
\sum_{\nu=dm+1}^{dm+k} \alpha_{\nu+N} = km+O(1) = O(m),
$$
 so we can estimate the upper bound in \eqref{eqn:tp} from above by   
\begin{equation}\label{eqn:<estimate}V_{N-1} \frac{(d(m+1)+N)!}{N!}C^{O(m)}(1+\epsilon)^{\frac{dm^2}{2} + O(m)} \prod_{\nu=0}^{dm} \stau(\nu+N)^{\alpha_{\nu+N}}.\end{equation}
Now $\stau(\nu+N) = \overline T(K,\calZ(k))$ whenever $\nu = k+ds$, so 
\begin{equation}\label{eqn:stauT}\begin{aligned}
\prod_{\nu=0}^{dm} \stau(\nu+N)^{\alpha_{\nu+N}} &= \prod_{k=1}^d\prod_{s=0}^m \overline T(K,\calZ(k-1))^{s+O(1)} \\
& = \prod_{k=1}^d \overline T(K,\calZ(k-1))^{\frac{m^2}{2}+O(1)}.
\end{aligned}\end{equation}

By definition $n=dm+O(1)$, so 
\begin{equation}\label{eqn:ln} l_n=\frac{dm^2}{2}+O(m).\end{equation}
  Taking $l_n$-th roots of $V_n$ and using \eqref{eqn:<estimate} and  \eqref{eqn:stauT}, we have (when $n\to\infty$,  equivalently $m\to\infty$) that 
$$
d(K) = \lim_{n\to\infty} (V_n)^{1/l_n}\leq (1+\epsilon) \prod_{k=1}^d\overline T(K,\calZ(k-1))^{1/d}, 
$$
where, in \eqref{eqn:<estimate}, the factors preceding the term $(1+\epsilon)^{\frac{dm^2}{2} + O(m)}$ all go to 1 in the limit upon taking roots.    (For the factorial terms, use the standard fact that $(m!)^{1/m^2}\to 1$ as $m\to\infty$.)  Letting $\epsilon\to 0$, we have $d(K)\leq \prod_{k=1}^d\overline T(K,\calZ(k-1))^{1/d}$.   On the other hand,
$$
\left(\prod_{k=1}^d\overline T(K,\calZ(k-1))\right)^{1/d}\leq \left(\prod_{k=1}^d T(K,\lambda_k)\right)^{1/d} =d(K)
$$
by Proposition \ref{prop:TZ}, and we get the reverse inequality.  
\end{proof}

We will now complete the proof of Theorem \ref{thm:ds=dc} by showing that the limits defining $T(K,\calZ(k))$ exist; equivalently,  
$$\underline T(K,\calZ(k))=\overline T(K,\calZ(k)) \hbox{ for each }k. $$
The proof for all cases is the same; we do it for $k=1$.   In this case, we are looking at Chebyshev constants associated to polynomials of the form $z_1^mz_2+\lot_{\calS}$.    
In what follows, redefine $N\in\N$ by $\bb_N=z_1^{d-2}z_2$; then the Chebyshev constants of interest are $\tau_{N+dm}$. We begin with the following.

\begin{proposition} \label{prop:13}
Let $\Sigma_m:=\sum_{\nu=1}^m \nu$.  Then
\begin{equation}\label{eqn:TZ1}
\overline T(K,\calZ(1)) = \lim_{m\to\infty} \left(\prod_{\nu=1}^m \tau_{N+d\nu}^{\nu}\right)^{1/\Sigma_m} .
\end{equation}
\end{proposition}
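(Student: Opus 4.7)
For each $k\in\{0,\ldots,d-1\}$, set $X_k(m):=\bigl(\prod_{\nu=1}^m \tau_{j_k(\nu)}^{\nu}\bigr)^{1/\Sigma_m}$, where $j_k(\nu)$ is the $\calS$-index of the $\calZ(k)$-monomial of degree $d-1+\nu$ (so $j_1(\nu)=N+d\nu$ and the proposition is the case $k=1$). I would prove $X_k(m)\to\overline T(K, \calZ(k))$ simultaneously for every $k$, combining (i) an elementary upper bound on each $X_k(m)$, (ii) a product identity $\prod_k X_k(m)\to \prod_k \overline T(K, \calZ(k))$ coming from the transfinite diameter, and (iii) a compactness argument that forces individual convergence.

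\textbf{Upper bound.} Since $\overline T(K, \calZ(k))=\limsup_\nu \tau_{j_k(\nu)}$ by \eqref{eqn:bartau}, for any $\epsilon>0$ there exists $\nu_0$ with $\tau_{j_k(\nu)}\leq \overline T(K, \calZ(k))+\epsilon$ for $\nu\geq\nu_0$.  Splitting the product at $\nu_0$ and taking $\Sigma_m$-th roots, the head contributes a factor tending to $1$ while the tail is bounded by $(\overline T(K, \calZ(k))+\epsilon)^{1-\Sigma_{\nu_0-1}/\Sigma_m}$, so $\limsup_m X_k(m)\leq \overline T(K, \calZ(k))$ for each $k$.

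\textbf{Product limit.}  I would combine \eqref{eqn:vn<tn} with the matching lower bound $V_j/V_{j-1}\geq \tau_j^{\alpha_j}$, obtained from Fekete-point interpolation: expanding $V_j$ along a Fekete configuration produces a polynomial of the form $\bb_j+\lot_\calB$ whose $K$-norm gives a lower bound on $V_j/V_{j-1}$.  Iterating from $j=N$ to $j=n$ yields
$$V_{N-1}\prod_{j=N}^n \tau_j^{\alpha_j}\;\leq\; V_n\;\leq\; V_{N-1}\cdot\frac{n!}{N!}\prod_{j=N}^n\tau_j^{\alpha_j}.$$
Since $V_n^{1/l_n}\to d(K)$ (Theorem 5.7 of \cite{M}) and $(n!/N!)^{1/l_n}\to 1$, it follows that $\bigl(\prod_{j=N}^n\tau_j^{\alpha_j}\bigr)^{1/l_n}\to d(K)$.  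Decomposing the product by $\calS$-columns and noting that $l_n\sim d\Sigma_m$ together with $(d-1)m/\Sigma_m\to 0$ (so the ``$d-1$'' shift in the exponent is negligible at the $1/l_n$ scale), one computes $\bigl[\prod_\nu\tau_{j_k(\nu)}^{d-1+\nu}\bigr]^{1/l_n}\sim X_k(m)^{1/d}$.  Hence $\prod_{k=0}^{d-1}X_k(m)\to d(K)^d=\prod_{k=0}^{d-1}\overline T(K, \calZ(k))$, the last equality by Proposition \ref{414}.

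\textbf{Individual convergence.}  If $X_1(m_j)\to L_1<\overline T(K, \calZ(1))$ along some subsequence, then by boundedness (from the upper bound) one extracts a further subsequence on which $X_k(m_j)\to L_k\leq\overline T(K, \calZ(k))$ for every $k$.  The product limit then gives $\prod_k L_k=\prod_k\overline T(K, \calZ(k))$, contradicting $L_1<\overline T(K, \calZ(1))$ combined with $L_k\leq\overline T(K, \calZ(k))$ for $k\neq 1$.  Therefore $X_1(m)\to\overline T(K, \calZ(1))$, establishing the formula.  \emph{The main obstacle} is the Fekete-point lower bound on $V_j/V_{j-1}$ and the careful bookkeeping of the column decomposition and the asymptotic equivalences used throughout.
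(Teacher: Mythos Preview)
Your approach is correct and takes a genuinely different, more symmetric route than the paper. Both proofs rely on Proposition~\ref{414} (giving $d(K)^d=\prod_k\overline T(K,\calZ(k))$) and on the upper estimate \eqref{eqn:vn<tn}, but you add the matching Fekete lower bound $V_j/V_{j-1}\geq\tau_j^{\alpha_j}$ to obtain the \emph{exact} limit $\bigl(\prod_j\tau_j^{\alpha_j}\bigr)^{1/l_n}\to d(K)$, after which a compactness argument over all $d$ columns simultaneously forces $X_k(m)\to\overline T(K,\calZ(k))$ for every $k$ at once. The paper instead works with the upper bound alone, getting only the one-sided inequality $d(K)\leq\limsup\bigl(\prod_k\prod_\nu\tau^{d-1+\nu}\bigr)^{1/l_n}$; it then isolates the $k=1$ column by estimating the other columns from above by $(1+\epsilon)\overline T(K,\calZ(k))$, cancels against the product identity from Proposition~\ref{414}, and finally runs a separate contradiction argument to upgrade the resulting $\limsup$ to a limit. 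Your route is cleaner and treats all columns on an equal footing; the paper's avoids the Fekete inequality, though that inequality is entirely standard and your sketch is correct (with $\zeta_1,\ldots,\zeta_{j-1}$ Fekete for $V_{j-1}$, the polynomial $\VDM_{\calB}(\zeta_1,\ldots,\zeta_{j-1},z)/V_{j-1}$ is a competitor for $\tau_j$ with $K$-norm at most $V_j/V_{j-1}$). One small caveat: your passage from exponent $d-1+\nu$ to $\nu$ (the ``$(d-1)m/\Sigma_m\to 0$'' step) tacitly uses that the $\tau_{j_k(\nu)}$ are bounded away from zero; the paper's final paragraph makes the same implicit assumption, so this is not a defect relative to the original.
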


\begin{proof}
We estimate as in \eqref{eqn:tp}, with $n=N-2 +dm$, $m\in\N$. (We will then let $m\to\infty$.)  We have 
\begin{equation}\label{eqn:prop415a}
V_n \leq V_{N-2}\frac{n!}{(N-2)!}\prod_{j=N-1}^n \tau_j^{\alpha_j}  = V_{N-2}\frac{n!}{(N-2)!} \prod_{k=0}^{d-1}\prod_{\nu=0}^m \tau_{N-1+d\nu+k}^{d-1+\nu}.
\end{equation}
 As $m\to\infty$ we have  $n\to\infty$, and therefore 
\begin{equation}\label{eqn:T1a}
\begin{aligned}
d(K)  \leq   \limsup_{m\to\infty}\left(\prod_{k=0}^{d-1}\prod_{\nu=0}^m \tau_{N-1+d\nu+k}^{d-1+\nu} \right)^{1/l_n}. 
\end{aligned}\end{equation}
(As before, the other factors on the right-hand side of \eqref{eqn:prop415a} go to $1$ in the limit.)  We will use \eqref{eqn:T1a} to show that
\begin{equation}\label{eqn:T1b}
\lim_{m\to\infty}\left(\prod_{\nu=0}^m \tau_{N+d\nu}^{d-1+\nu}\right)^{1/l_n} = \overline T(K,\calZ(1))^{1/d}.
\end{equation}
Let $\epsilon>0$.  Then for each $k$, $\tau_{N-1+d\nu+k}<(1+\epsilon)\overline T(K,\calZ(k))$ holds for all but finitely many $\nu$; we apply this to all terms where $k\neq 1$ on the right-hand side of \eqref{eqn:T1a}: 
$$\begin{aligned}
\limsup_{m\to\infty}\left(\prod_{k=0}^{d-1}\prod_{\nu=0}^m \tau_{N-1+d\nu+k}^{d-1+\nu} \right)^{1/l_n}
  & \leq \limsup_{m\to\infty}\left(\prod_{k\neq 1}\prod_{\nu=0}^m (1+\epsilon)\overline T(K,\calZ(k))^{d-1+\nu} \right)^{1/l_n} \\
& \qquad \times \limsup_{m\to\infty}\left(\prod_{\nu=0}^m \tau_{N+d\nu}^{d-1+\nu} \right)^{1/l_n} \\
& = \left((1+\epsilon)\prod_{k\neq 1} \overline T(K,\calZ(k)))\right)^{1/d} \\
&\qquad \times \limsup_{m\to\infty}\left(\prod_{\nu=0}^m \tau_{N+d\nu}^{d-1+\nu} \right)^{1/l_n}. \\
\end{aligned}$$ 
Then using Theorem \ref{thm:ds=dc}, 
$$\begin{aligned}
\left(\prod_{k=0}^{d-1} \overline T(K,\calZ(k)))\right)^{1/d}  =d(K)
& \leq \left((1+\epsilon)\prod_{k\neq 1} \overline T(K,\calZ(k)))\right)^{1/d} \\ 
&\qquad \times \limsup_{m\to\infty}\left(\prod_{\nu=0}^m \tau_{N+d\nu}^{d-1+\nu} \right)^{1/l_n}. 
\end{aligned}$$  Cancelling the common factors $\overline T(K,\calZ(k))$ for $k\neq 1$ on each side, and letting $\epsilon\to 0$, 
$$
\overline T(K,\calZ(1)))^{1/d} \leq \limsup_{m\to\infty}\left(\prod_{\nu=0}^m \tau_{N+d\nu}^{d-1+\nu} \right)^{1/l_n}.
$$
To get  a reverse inequality, let $\epsilon>0$.  Then $\tau_{N+dm}<(1+\epsilon)\overline T(K,\calZ(1))$ for all but finitely many $m\in\N$, so  
$$\begin{aligned}
\limsup_{m\to\infty}\left(\prod_{\nu=0}^m \tau_{N+d\nu}^{d-1+\nu} \right)^{1/l_n} &\leq \limsup_{m\to\infty}  \left(\prod_{\nu=0}^m ((1+\epsilon)\overline T(K,\calZ(1)))^{d-1+\nu}  \right)^{1/l_n} \\
& =    \limsup_{m\to\infty}\left( (1+\epsilon)\overline T(K,\calZ(1)) \right)^{\left(\sum_{\nu=0}^{m}(d-1-\nu)\right)/l_n }.
\end{aligned}$$ 
Using \eqref{eqn:ln}, we have  $\frac{1}{l_n}\sum_{\nu=0}^{m}(d-1-\nu)= \frac{m^2+O(m)}{dm^2+O(m)}\to \frac{1}{ d}$  as $m\to\infty$,  
 and we obtain  $$\limsup\limits_{m\to\infty}\left(\prod_{\nu=0}^m \tau_{N+d\nu}^{d-1+\nu} \right)^{1/l_n}\leq \left((1+\epsilon)\overline T(K,\calZ(k))\right)^{1/d}.$$  Letting $\epsilon\to 0$ we get the reverse inequality; altogether, 
$$
\overline T(K,\calZ(1)))^{1/d} = \limsup_{m\to\infty}\left(\prod_{\nu=0}^m \tau_{N+d\nu}^{d-1+\nu} \right)^{1/l_n}.
$$
This is almost \eqref{eqn:T1b}, except that we need limsup replaced by lim.  

For the sake of obtaining a contradiction, suppose the limsup is not a limit. Then there exists $\delta>0$ and 
a subsequence $\{m_j\}$ in the parameter $m$ such that 
$$
\overline T(K,\calZ(1)))^{1/d}-\delta = \lim_{k\to\infty} \left(\prod_{\nu=0}^{m_j} \tau_{N+d\nu}^{d-1+\nu} \right)^{1/l_{n_j}}.
$$
But then, by similar reasoning as above with the limit formula $d(K)=\lim_{j\to\infty} (V_{n_j})^{1/l_{n_j}}$ (where $n_j=N-2+dm_j$),  we obtain for any $\epsilon>0$, 
$$\begin{aligned}
\left(\prod_{k=0}^{d-1} \overline T(K,\calZ(k))\right)^{1/d}  
 \leq \left((1+\epsilon)\prod_{k\neq 1} \overline T(K,\calZ(k)))\right)^{1/d}  
\left(\overline T(K,\calZ(1)))^{1/d}-\delta\right)
\end{aligned}$$
and therefore $\overline T(K,\calZ(1))^{\frac{1}{d}}\leq (1+\epsilon)^{1-\frac{1}{d}}(\overline T(K,\calZ(1)))^{\frac{1}{d}}-\delta )$.  
Letting $\epsilon\to 0$ gives a contradiction.  Thus \eqref{eqn:T1b} holds.  

To finish the proof, we must modify \eqref{eqn:T1b} to obtain \eqref{eqn:TZ1}.  First, we may replace  $\tau_{N+d\nu}^{d-1+\nu}$ in the product by  $\tau_{N+d\nu}^{\nu}$. This amounts to dividing out $\tau_{N+d\nu}^{d-1}$ for each $\nu=1,\ldots,m$.  If we estimate all of these $\tau_{N+d\nu}$ quantities by a uniform constant $C$ then  \eqref{eqn:T1b} is off by an estimated $C^{m(d-1)}$  relative to \eqref{eqn:TZ1}; the $l_n$-th root of this goes to 1 as $m\to\infty$  (recall $l_n=dm^2/2+O(m)$).  

Finally,  $\Sigma_m= \frac{m^2}{2}+O(m)$, so $\frac{l_n}{\Sigma_m}\to d$.  Replacing the $l_n$-th root in the formula by the $\Sigma_m$-th root,  the new limit as $m\to\infty$ is the $d$-th power of the original limit.  Hence \eqref{eqn:TZ1} holds.  
\end{proof}

For convenience, write $\overline{\underline T}:=\overline{\underline T}(K,\calZ(1))$ and $T_m:=\tau_{N+dm}$, and rewrite \eqref{eqn:TZ1} in the form \def\sT{\overline T}
\begin{equation}\label{eqn:logTZ1}
\log\sT = \lim_{m\to\infty}\left(\frac{1}{\Sigma_m}\sum_{\nu=1}^m \nu\log T_{\nu}\right).
\end{equation}
\def\bP{\mathbf{P}}
Define for $\delta,\theta>0$ and $m\in \N$ the statement
\begin{equation*}
\bP(m,\delta,\theta)\equiv \Bigl[ \log T_{m+s}<\log\sT -\delta \hbox{ for all } s\in[0,\theta m)\cap \Z \Bigr].  
\end{equation*}
We use Proposition \ref{prop:13} for the next result.

\begin{lemma}
Fix $\delta,\theta>0$.  Then $\bP(m,\delta,\theta)$ is true for at most finitely many $m\in \N$.  
\end{lemma}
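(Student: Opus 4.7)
The plan is to argue by contradiction using the averaging identity \eqref{eqn:logTZ1} from Proposition \ref{prop:13}. Suppose that $\bP(m,\delta,\theta)$ holds for infinitely many $m$, and extract a subsequence $m_k\to\infty$ along which every $\log T_\nu$ with $\nu\in[m_k,m_k+\theta m_k)\cap\Z$ lies strictly below $\log\sT-\delta$. Set $M_k:=m_k+\lfloor\theta m_k\rfloor-1$, so that $M_k/m_k\to 1+\theta$ and the entire window $m_k\leq\nu\leq M_k$ is controlled by the hypothesis.

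The heart of the argument is to split the weighted average in \eqref{eqn:logTZ1} at the index $m_k$ and estimate the two pieces separately. For the window block the hypothesis immediately gives
$$
\sum_{\nu=m_k}^{M_k}\nu\log T_\nu \;<\; (\log\sT-\delta)\sum_{\nu=m_k}^{M_k}\nu.
$$
For the early block $1\leq\nu<m_k$ I would use the identity $\limsup_\nu\log T_\nu=\log\sT$, which is a direct consequence of \eqref{eqn:bartau}: given $\epsilon>0$, choose $\nu_0$ so that $\log T_\nu\leq\log\sT+\epsilon$ whenever $\nu\geq\nu_0$, collect the finitely many terms below $\nu_0$ into an absolute constant $C_0:=\sum_{\nu<\nu_0}\nu\log T_\nu$ (possibly $-\infty$, which only strengthens the upper bound), and obtain
$$
\sum_{\nu=1}^{m_k-1}\nu\log T_\nu \;\leq\; C_0+(\log\sT+\epsilon)\sum_{\nu=\nu_0}^{m_k-1}\nu.
$$

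Next I would divide by $\Sigma_{M_k}\sim\tfrac{1}{2}(1+\theta)^2 m_k^2$ and insert the asymptotic ratios
$$
\frac{\sum_{\nu=1}^{m_k-1}\nu}{\Sigma_{M_k}}\longrightarrow \alpha:=\frac{1}{(1+\theta)^2},\qquad \frac{\sum_{\nu=m_k}^{M_k}\nu}{\Sigma_{M_k}}\longrightarrow 1-\alpha.
$$
Letting $k\to\infty$, the left-hand side of the split estimate converges to $\log\sT$ by \eqref{eqn:logTZ1}, while the right-hand side converges to $\alpha(\log\sT+\epsilon)+(1-\alpha)(\log\sT-\delta)$, yielding
$$
\log\sT \;\leq\; \alpha(\log\sT+\epsilon)+(1-\alpha)(\log\sT-\delta),
$$
i.e.\ $(1-\alpha)\delta\leq\alpha\epsilon$. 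Since $\theta>0$ forces $1-\alpha>0$, letting $\epsilon\to 0$ gives $(1-\alpha)\delta\leq 0$, contradicting $\delta>0$.

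The main obstacle, such as it is, is bookkeeping rather than substance: one must ensure the early-block estimate is genuinely one-sided so that possibly unbounded negativity of $\log T_\nu$ at small indices is harmless, and that the asymptotic proportion $\alpha=(1+\theta)^{-2}$ devoted to the early block is strictly less than $1$. Both hold automatically, which is what creates the strict separation $(1-\alpha)\delta>0$ driving the contradiction.
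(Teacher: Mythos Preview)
Your argument is correct and follows the same strategy as the paper's: assume $\bP(m_k,\delta,\theta)$ holds along a sequence $m_k\to\infty$, split the weighted average in \eqref{eqn:logTZ1} at $m_k$, bound the window block via the hypothesis, pass to the limit, and obtain the contradiction $(1-\alpha)\delta\le 0$ with $\alpha=(1+\theta)^{-2}$. The only difference is cosmetic: for the early block the paper applies \eqref{eqn:logTZ1} a second time, writing $S_1(k)=\dfrac{\Sigma_{m_k}}{\Sigma_{m_k+s_k}}\cdot\dfrac{1}{\Sigma_{m_k}}\sum_{\nu\le m_k}\nu\log T_\nu\to\alpha\log\sT$ directly, which spares your auxiliary $\epsilon$.
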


\begin{proof}  We prove this by contradiction.

Suppose the conclusion is false, i.e., there are infinitely many values of $m\in \N$ where $\bP(m,\delta,\theta)$ is true.  Arrange them into a strictly increasing sequence $\{m_k\}_{k=1}^{\infty}$.  

For each $k$, let $s_k$ be the unique integer for which $\theta m_k-1<s_k\leq \theta m_k$; so 
$$s_k= m_k+O(1) \hbox{ and  } \Sigma_{m_k+s_k}=\frac{1}{2}(1+\theta)^2m_k^2+O(m_k).$$ 
 We have  
$$
\frac{1}{\Sigma_{m_k+s_k}}\sum_{\nu=1}^{m_k+s_k} \nu\log T_{\nu} = \frac{1}{\Sigma_{m_k+s_k}}\sum_{\nu=1}^{m_k} \nu\log T_{\nu}  + \frac{1}{\Sigma_{m_k+s_k}}\sum_{\nu=m_k+1}^{m_k+s_k} \nu\log T_{\nu}.
$$
For convenience, write this sum as $S(k)=S_1(k)+S_2(k)$.  

We have $S(k)\to\log\sT$ as $k\to\infty$ by Proposition \ref{prop:13}.  Next,  
$$
S_1(k) = \frac{\Sigma_{m_k}}{\Sigma_{m_k+s_k}}\left(\frac{1}{\Sigma_{m_k}}\sum_{\nu=1}^{m_k} \nu\log T_{\nu} \right) \longrightarrow \frac{1}{(1+\theta)^2}\log\sT \hbox{ as } k\to\infty,
$$
again by Proposition \ref{prop:13}.   For $S_2(k)$, we use $\bP(m_k,\delta,\theta)$ to estimate as follows:
$$\begin{aligned}
S_2(k) = \frac{1}{\Sigma_{m_k+s_k}}\sum_{\nu=m_k+1}^{m_k+s_k} \nu\log T_{\nu}
&< (\log\sT -\delta)\left(\frac{1}{\Sigma_{m_k+s_k}}\sum_{\nu=m_k+1}^{m_k+s_k} \nu \right) \\
&\longrightarrow (\log\sT -\delta)\left( 1-\frac{1}{(1+\theta)^2}\right) \hbox{ as }k\to\infty.
\end{aligned}$$
Finally, looking at the limit of $S(k)=S_1(k)+S_2(k)$ on both sides as $k\to\infty$, 
$$\begin{aligned}
\log(\sT) &\leq \frac{1}{(1+\theta)^2}\log\sT + (\log\sT -\delta)\left( 1-\frac{1}{(1+\theta)^2}\right) \\
& = \log\sT - \delta\left( 1-\frac{1}{(1+\theta)^2}\right) 
\end{aligned}$$
which is a contradiction.  The proof is complete. 
\end{proof}

\begin{corollary}\label{cor:15}
Let $\delta,\theta>0$, and let $\{m_k\}_{k=1}^{\infty}$ be an increasing sequence in $ \N$.  Then there exists $k_0\in \N$ such that  for each $k\geq k_0$, 
$$ \log T_{m_k+s_k}\geq \log \sT -\delta \hbox{ for some }s_k\in[0,\theta m_k]\cap \Z. $$
\end{corollary}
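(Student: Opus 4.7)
The plan is to observe that the corollary is essentially the contrapositive of the preceding lemma, specialized to the subsequence $\{m_k\}$. First I would unpack the definition of $\bP(m,\delta,\theta)$: its negation is exactly the statement
\[
\exists\, s\in[0,\theta m)\cap\Z \text{ such that } \log T_{m+s}\geq \log\sT-\delta,
\]
which in turn implies (trivially) the existence of such an $s$ in the larger set $[0,\theta m]\cap\Z$.

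Next I would invoke the previous lemma: the set $F:=\{m\in\N : \bP(m,\delta,\theta) \text{ holds}\}$ is finite. Since $\{m_k\}_{k=1}^{\infty}$ is strictly increasing in $\N$, it eventually avoids $F$, so there exists $k_0\in\N$ with $m_k\notin F$ for all $k\geq k_0$. For each such $k$, the negation of $\bP(m_k,\delta,\theta)$ yields $s_k\in[0,\theta m_k)\cap\Z\subseteq[0,\theta m_k]\cap\Z$ with $\log T_{m_k+s_k}\geq \log\sT-\delta$, giving the conclusion.

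There is no real obstacle here; the corollary is just a repackaging of the lemma along an arbitrary increasing subsequence. The only minor point to note is the mild discrepancy between the half-open interval $[0,\theta m)$ appearing in the definition of $\bP$ and the closed interval $[0,\theta m]$ appearing in the corollary, but this is immaterial since the former is contained in the latter.
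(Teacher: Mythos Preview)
Your proposal is correct and follows essentially the same approach as the paper: apply the preceding lemma to conclude that $\bP(m_k,\delta,\theta)$ fails for all sufficiently large $k$, and then read off the conclusion as the negation of $\bP$. Your write-up is in fact a bit more careful than the paper's, explicitly noting the harmless discrepancy between $[0,\theta m)$ and $[0,\theta m]$.
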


\begin{proof}
By the previous lemma, $\bP(m_k,\delta,\theta)$ is true for only finitely many $k$, i.e., there exists  $k_0\in \N$ sufficiently large that $\bP(m_k,\delta,\theta)$ is false for all $k\geq k_0$. The conclusion is simply the negation of $\bP(m_k,\delta,\theta)$. 
\end{proof}

\begin{proposition}\label{prop:infT=supT}
We have $\underline T=\overline T$.
\end{proposition}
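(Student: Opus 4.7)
The plan is to combine Corollary \ref{cor:15} with a submultiplicativity estimate coming from the product inclusion $\calZ(1)\cdot\calZ(0)\subseteq\calZ(1)$. Corollary \ref{cor:15} already gives, near any index $m_k$, a nearby index $m_k+s_k$ with $\log T_{m_k+s_k}$ close to $\log\overline T$; submultiplicativity will propagate that lower bound back to $T_{m_k}$ itself with an error that vanishes as the window $\theta$ shrinks.

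The first step is to verify the product inclusion. If $p=z_2z_1^a+\lot_\calS\in\calZ(1)$ and $q=z_1^b+\lot_\calS\in\calZ(0)$, then the only degree-$(a+b+1)$ basis monomials in the expansion of $pq$ in $\C[A]$ are $z_2z_1^{a+b}$ (coefficient $1$, from multiplying the grevlex leading terms) and possibly $z_1^{a+b+1}$ (from the $z_1^{a+1}$-part of $\lot_\calS(p)$). Since $z_1^{a+b+1}$ precedes $z_2z_1^{a+b}$ in grevlex, it is absorbed into $\lot_\calS$, so $pq=z_2z_1^{a+b}+\lot_\calS\in\calZ(1)$ of degree $a+b+1$. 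Choosing $p,q$ to be Chebyshev polynomials for their respective classes yields
\begin{equation*}
T_{m+s}^{d-1+m+s}\;\leq\;T_m^{d-1+m}\,\sigma_s^s,
\end{equation*}
where $\sigma_s:=T_s(K,\calZ(0))$. By Proposition \ref{prop:classical}, $\sigma_s\to T(K,\calZ(0))$ is a finite positive limit, so $|\log\sigma_s|\leq C$ for some $C<\infty$ uniformly in $s\geq 1$.

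To conclude, fix $\eta>0$ and take a subsequence $\{m_k\}$ with $T_{m_k}\to\underline T$. Choose $\delta,\theta>0$ small enough that $\delta+\theta(|\log\overline T|+\delta+C)<\eta$. Corollary \ref{cor:15} furnishes, for all large $k$, some $s_k\in[0,\theta m_k]\cap\Z$ with $\log T_{m_k+s_k}\geq\log\overline T-\delta$. The case $s_k=0$ gives $\log T_{m_k}\geq\log\overline T-\delta$ directly. For $s_k\geq 1$, setting $\alpha_k:=s_k/(d-1+m_k)\leq\theta$ and rearranging the submultiplicativity inequality gives
\begin{equation*}
\log T_{m_k}\;\geq\;(1+\alpha_k)\log T_{m_k+s_k}-\alpha_k\log\sigma_{s_k}\;\geq\;\log\overline T-\delta-\theta\bigl(|\log\overline T|+\delta+C\bigr).
\end{equation*}
Letting $k\to\infty$ yields $\log\underline T\geq\log\overline T-\eta$; since $\eta>0$ was arbitrary, $\underline T\geq\overline T$, and the reverse inequality is trivial.

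The main obstacle is the submultiplicativity step: one must check that no unanticipated degree-$(a+b+1)$ basis monomials arise from reductions modulo $P_d$ in the product $pq$, and that the coefficient of $z_2z_1^{a+b}$ is preserved as $1$. Once that is in hand, the remainder of the proof is a one-line algebraic rearrangement of the submultiplicativity inequality feeding the output of Corollary \ref{cor:15} back into itself.
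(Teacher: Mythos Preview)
Your proof is correct and follows essentially the same route as the paper's: combine Corollary~\ref{cor:15} with a submultiplicativity estimate to show that along any subsequence $T_{m_k}\to\underline T$ one has $\log T_{m_k}\geq\log\overline T-\eta$ for arbitrarily small $\eta$. The only difference is cosmetic: the paper multiplies the Chebyshev polynomial $t_{m_k}\in\calZ(1)$ by the monomial $z_1^{s_k}$ (obtaining $T_{m_k+s_k}^{m_k+s_k}\leq \|z_1\|_K^{s_k}\,T_{m_k}^{m_k}$ directly, with no reduction issues to worry about), while you multiply by a $\calZ(0)$-Chebyshev polynomial of degree $s_k$ (obtaining $T_{m+s}^{d-1+m+s}\leq T_m^{d-1+m}\sigma_s^{s}$); the paper frames the endgame as a contradiction, while you argue directly. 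Your verification that the leading grevlex monomial of $pq$ is $z_2z_1^{a+b}$ with coefficient $1$ is fine: the degree-$(a+b+1)$ homogeneous part of $pq$ is $(z_2z_1^a+cz_1^{a+1})\cdot z_1^b$, which involves no $z_2^k$ with $k\geq d$ and hence no reduction modulo $P_d$, and any reduction appearing in lower-degree cross terms stays in degree $\leq a+b$.
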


\begin{proof} \def\uT{\underline T}
For the purpose of obtaining a contradiction, suppose $\uT<\sT$.  

Let $\{m_k\}_{k=1}^{\infty}$ be a strictly increasing sequence in $ \N$ such that $T_{m_k}\to\uT$, and fix $\epsilon\in(0,\sT-\uT)$.  Replacing $\{m_k\}$ with its tail, we may assume that 
\begin{equation}\label{eqn:supT=infT1}
\log T_{m_k}< \log\sT -\epsilon \hbox{ for all } k.
\end{equation}

Let $\delta,\theta>0$.  By Corollary \ref{cor:15} there exists $k_0\in \N$ and a sequence $\{s_k\}$ in $[0,\theta m_k]\cap \Z$ such that 
\begin{equation}\label{eqn:supT=infT2}
\log T_{m_k+s_k}\geq \log\sT -\delta \hbox{ for all } k\geq k_0.
\end{equation}

Let $t_{m_k}(z)=z_1^{m_k-1}z_2+\lot_{\calS}$ be a polynomial such that $\|t_{m_k}\|_K=T_{m_k}^{m_k}$.  Then $z_1^{s_k}t_{m_k}(z)=z_1^{m_k+s_k-1}z_2+\lot_{\calS}$, so 
$$
T_{m_k+s_k}^{m_k+s_k}\leq \|z_1^{s_k}t_{m_k}\|_K\leq \|z_1^{s_k}\|_K\|t_{m_k}\|_K\leq C^{s_k}T_{m_k}^{m_k},
$$
choosing some $C>\|z_1\|_K$.  Hence $T_{m_k+s_k}^{1+s_k/m_k}\leq C^{s_k/m_k}T_{m_k}$.  Taking logs and using \eqref{eqn:supT=infT1}, \eqref{eqn:supT=infT2}, we obtain 
$$
\left(1+\frac{s_k}{m_k}\right)(\log\sT -\delta)\leq  \frac{s_k}{m_k}\log C + \log\sT-\epsilon, \hbox{ for all }k\geq k_0.
$$
Taking the limsup on both sides as $k\to\infty$, we have 
\begin{equation}\label{eqn:infT=supT5}
(1+\theta_1)(\log\sT-\delta)\leq \theta_2\log C +\log\sT-\epsilon \hbox{ for some }\theta_1,\theta_2 \in[0,\theta].
\end{equation}
Here, $\theta_1,\theta_2$ are either the $\limsup$ or $\liminf$ of $\frac{s_k}{m_k}$ as $k\to\infty$, depending on whether the signs of  $\log\sT -\delta,\log C$ are positive or negative.

Finally, observe that $\epsilon$ does not depend on $\delta,\theta$.  We can let $\delta,\theta\to 0$; then $\theta_1,\theta_2\to 0$ also, and \eqref{eqn:infT=supT5} reduces to the contradiction $\log\sT\leq\log\sT-\epsilon$.  This completes the proof. 
\end{proof}

Note that Theorem \ref{chebthm}, that $T(K,\calZ(k-1))=T(K,\lambda_k)$ for all $k\in\{1,\ldots,d\}$, is the final part of Theorem \ref{thm:ds=dc}.

\section{Extremal-like functions} In this section, we define two families of extremal-like functions associated to a compact, nonpolar subset $K\subset A \subset \C^2$, denoted $V_{K}^{(k)}, k=1,...,d$ and $\tilde V_{K}^{(k)}, k=0,...,d-1$, and we show that 
$$V_K^*= \max[V_{K}^{(1)},...,V_{K}^{(d)}] =\max[\tilde V_{K}^{(0)},...,\tilde V_{K}^{(d-1)}]  \ \hbox{on} \ A^0 \setminus K.$$
We will utilize the Chebyshev polynomials associated to the family of Chebyshev constants $T(K, \calM_{z_1^j}(\bv_k)), \  j=0,...,d-2$ to construct $V_{K}^{(k)}$; and we use the Chebyshev polynomials associated to the Chebyshev constants $T(K,\lambda_k)$ to construct $\tilde V_{K}^{(k)}$. 
 
Recall we have the basis $\calC$ of $\C[A]$ given by 
$$\begin{aligned}
&1,z_1,z_2,\ldots, z_1^{d-2},\ldots, z_2^{d-2},    \\
&\bv_1,\ldots,\bv_d, z_1\bv_1,\ldots,z_1\bv_d,\ldots, z_1^{d-2}\bv_1,\ldots,z_1^{d-2}\bv_d,\\ 
&\bv_1^2,\ldots,\bv_d^2,\ldots, z_1^{d-2}\bv_1^2,\ldots,z_1^{d-2}\bv_d^2,\\
&\ldots \\
&\bv_1^k,\ldots,\bv_d^k,\ldots, z_1^{d-2}\bv_1^k,\ldots,z_1^{d-2}\bv_d^k,\ldots 
\end{aligned}$$
We have shown 
\begin{equation} \label{jstuff} T(K,\lambda_k)=T(K, \calM_{z_1^j}(\bv_k)), \ j=0,...,d-2; \ k=1,...,d \end{equation}
in Proposition \ref{prop:Tz1z2} where
$$\calM_{z_1^j}(\bv_k)) :=\{ z_1^j\bv_k^n+\lot\colon n\in \N \}.$$
Given $k\in \{1,...,d\}$, for each $n>d-2$, by the division algorithm, there exists a unique positive integer $l=l(n)$ and $j=j(n)\in \{0,1,...,d-2\}$ so that  $n=l(d-1)+j$. From (\ref{jstuff}), we have  
$$\lim_{n\to \infty} T_l(K,\calM_{z_1^j}(\bv_k))=T(K,\lambda_k).$$
We let 
$$t_{n}^{(k)}(z_1,z_2)=z_1^{j(n)}\bv_k^{l(n)} + \lot$$
be a Chebyshev polynomial for this class; i.e., $T_l(K,\calM_{z_1^j}(\bv_k)) =||t_{n}^{(k)}||_K^{1/n}$, and we define, for $(z_1,z_2)\in A$,  
\begin{equation}\label{vkk} V_{K}^{(k)}(z_1,z_2):=[\limsup_{n\to \infty} \frac{1}{n}\log \frac{| t_{n}^{(k)}(z_1,z_2)|}{||t_n^{(k)}||_K}]^* \in L(A) \end{equation} 
where the uppersemicontinuous regularization is taken over points in $A$. Using the results in the previous sections, we will show these functions have the property that 
\begin{equation} \label{notildes} \rho(V_K^{(k)},\lambda_m)\leq \rho_K(\lambda_m) \ \hbox{if} \ m\not = k; \ \rho(V_K^{(k)},\lambda_k)=\rho_K(\lambda_k).\end{equation} 
It then follows from Theorem \ref{b98} that 
$$\max [V_{K}^{(1)}(z_1,z_2),...,V_{K}^{(d)}(z_1,z_2)]= V_K^*(z_1,z_2)  \ \hbox{on} \ A^0 \setminus K.$$

Recall that we ordered the directions $\lambda_j$ so that
$$T(K,\lambda_1) \geq T(K,\lambda_2) \geq \cdots \geq T(K,\lambda_d);$$
and since 
\begin{equation}\label{keyeq2} \rho_K(\lambda_k)= -\log T(K,\lambda_k), \ \hbox{we have} \ 
\rho_K(\lambda_1) \leq \rho_K(\lambda_2) \leq \cdots \leq \rho_K(\lambda_d)  \end{equation} 
(recall Corollary \ref{cor:oldT}).

From Theorem \ref{chebthm}, we also have 
\begin{equation}\label{keyeq} T(K,\calZ(k))=T(K,\lambda_{k+1}), \ k=0,...,d-2. \end{equation} 
Here, recall the monomial basis $\calS$ consists of all monomials of degree $\leq d-2$, and $z_1^{n},  z_1^{n-1}z_2,\ldots,z_1^{n-d+1}z_2^{d-1}$ in degrees $n\geq d-1$.  We use the \emph{grevlex ordering}, where we order degree by degree,  and by increasing powers of $z_2$ within the same degree. For each $k\in\{0,\ldots,d-1\}$, we then defined 
$$\calZ(k) = \{ p\in\C[A]\colon p(z_1,z_2)=z_2^kz_1^n+\lot_{\calS} \}.$$
Let 
$$\tilde t_{n}^{(k)}(z_1,z_2)=z_2^kz_1^n+\lot_{\calS}$$
be a Chebyshev polynomial for this class; i.e., $T_{n+k}(K,\calZ(k))= ||\tilde t_{n}^{(k)}||_K^{1/n+k}$. We define, for $(z_1,z_2)\in A$,   
\begin{equation}\label{vkkt} \tilde V_{K}^{(k)}(z_1,z_2):=[\limsup_{n\to \infty} \frac{1}{n+k}\log \frac{| \tilde t_{n}^{(k)}(z_1,z_2)|}{||\tilde t_{n}^{(k)}||_K}]^* \in L(A).\end{equation}
Note that $\tilde t_{n}^{(k)}$ may not be unique 
but for any choice we will see that these functions $\tilde V_{K}^{(k)}$ have the property that 

\begin{eqnarray}\label{tildes}
\begin{array}{l} \rho(\tilde V_{K}^{(k)},\lambda_j) \leq \rho_K(\lambda_{j}) \ \hbox{if } j \leq k;\\
 \rho(\tilde V_{K}^{(k)},\lambda_j)=  \rho_K(\lambda_{k+1}) \ \hbox{if } j= k+1; \\
 \rho_K(\lambda_{k+1})  \leq   \rho(\tilde V_{K}^{(k)},\lambda_j) \leq \rho_K(\lambda_{j}) \ \hbox{if } j \geq k+2
     \end{array} 
\end{eqnarray}
It then follows from Theorem \ref{b98} that 
$$\max [\tilde V_{K}^{(0)}(z_1,z_2),...,\tilde V_{K}^{(d-1)}(z_1,z_2)]= V_K^*(z_1,z_2)  \ \hbox{on} \ A^0 \setminus K.$$

In order to prove (\ref{notildes}) and (\ref{tildes}), we consider the Robin constants of these extremal-like functions. Recall that for $u\in L(A)$, 
$$\rho_u(\lambda_k)=\limsup_{|z_1|\to \infty, \ (z_1,z_2)\in A, \ z_2/z_1\to \lambda_k} [u(z_1,z_2)-\log |z_1|].$$
If $u=\frac{1}{n}\log |p|$ where $p$ is a polynomial of degree $n$ in $\C[A]$, then
$$\rho_u(\lambda_k)=\limsup_{|z_1|\to \infty, \ (z_1,z_2)\in A, \ z_2/z_1\to \lambda_k} [\frac{1}{n}\log |p(z_1,z_2)|-\log |z_1|]$$
$$=\limsup_{|z_1|\to \infty, \ (z_1,z_2)\in A, \ z_2/z_1\to \lambda_k} [\frac{1}{n}\log |\hat p(z_1,z_2)|-\log |z_1|]$$
where $\hat p$ is the top degree ($n$) homogeneous piece of $p$. Thus 
$$\hat p(z_1,z_2)= a_nz_1^n +a_{n-1}z_1^{n-1}z_2 +\cdots + a_{n-(d-2)}z_1^{n-(d-2)}z_2^{d-2}.$$

Hence,
$$\frac{1}{n}\log |\hat p(z_1,z_2)|-\log |z_1| = \log|a_n +a_{n-1}(z_2/z_1) + \cdots + a_{n-(d-2)}(z_2/z_1)^{d-2}|$$ and, as in (\ref{robpoly}), 
$$\rho_u(\lambda_k)=\limsup_{|z_1|\to \infty, \ z_2/z_1\to \lambda_k} \frac{1}{n}\log|a_n +a_{n-1}(z_2/z_1) + \cdots + a_{n-(d-2)}(z_2/z_1)^{d-2}|$$
$$=\frac{1}{n}\log|a_n +a_{n-1}\lambda_k + \cdots + a_{n-(d-2)}\lambda_k^{d-2}|=\frac{1}{n}\log|\hat p(1,\lambda_k)|.$$
Thus, for
$$V_K^{(k)}(z_1,z_2):=[\limsup_{n\to \infty} \frac{1}{n}\log \frac{| t_n^{(k)}(z_1,z_2)|}{||t_n^{(k)}||_K}]^*,$$
and we have that
$$\rho(V_K^{(k)},\lambda_k)\geq \limsup_{n\to \infty} \frac{1}{n}\log \frac{| \hat t_n^{(k)}(1,\lambda_k)|}{||t_n^{(k)}||_K}$$
$$=\limsup_{n\to \infty} \frac{1}{n}\log \frac{| 1^j \bv_k(1,\lambda_k)^l|}{||t_n^{(k)}||_K}$$
$$=\lim_{n\to \infty} \frac{1}{n}\log \frac{1}{||t_n^{(k)}||_K}=-\log T(K,\lambda_k)= \rho_K(\lambda_k)$$
from (\ref{keyeq2}). On the other hand, since $V_K^{(k)}(z_1,z_2)\leq V_K^*(z_1,z_2)$, 
for all $m,k=1,...,d$
$$\rho_{V_{K}^{(k)}}(\lambda_{m})\leq \rho_K(\lambda_{m}).$$ 
This verifies (\ref{notildes}).

Concerning (\ref{tildes}) and the functions $\tilde V_{K}^{(k)}$, we first observe that
since we know for each $k=0,...,d-1$, 
$$\tilde V_{K}^{(k)}(z_1,z_2)\leq V_K^*(z_1,z_2),$$
for all $j=1,...,d$, and all $k=0,...,d-1$, 
$$\rho_{\tilde V_{K}^{(k)}}(\lambda_{j})\leq \rho_K(\lambda_{j}).$$
Next, we have 
\begin{equation}\label{hateqn} \hat {\tilde t_{n}^{(k)}}(z_1,z_2)=z_2^k z_1^n+a_{n,k-1}z_2^{k-1}z_1^{n+1} +\cdots +a_{n,0}z_1^{n+k}.\end{equation} 
In particular, 
$\hat {\tilde t_{n}^{(0)}}(z_1,z_2)=z_1^n$ so that 
$\hat {\tilde t_{n}^{(0)}}(1,\lambda_j)=1$. 
Then
$$\rho_{\tilde V_{K}^{(0)}}(\lambda_{j})\geq \limsup_{n\to \infty} \frac{1}{n}\log \frac{|\hat {\tilde t_{n}^{(0)}}(1,\lambda_j)|}{||\tilde t_{n}^{(0)}||_K}=\limsup_{n\to \infty} \frac{1}{n}\log \frac{1}{||\tilde t_{n}^{(0)}||_K}$$
$$=-\log T(K,\calZ(0))=-\log T(K,\lambda_{1})= \rho_K(\lambda_{1})$$
from (\ref{keyeq}) and (\ref{keyeq2}) for all $j=1,...,d$. Thus 
$$\rho_{\tilde V_{K}^{(0)}}(\lambda_{1})=\rho_K(\lambda_{1}) \ \hbox{and}  \ \rho_K(\lambda_{1})\leq \rho_{\tilde V_{K}^{(0)}}(\lambda_{j})\leq  \rho_K(\lambda_{j}), \ j=2,...,d.$$ 

Now, for $k=1$,
$$\hat {\tilde t_{n}^{(1)}}(z_1,z_2)=z_2z_1^n+a_nz_1^{n+1}=z_1^n(z_2+a_nz_1)$$
so that for $j=1,...,d$,
$$\hat {\tilde t_{n}^{(1)}}(1,\lambda_j)= \lambda_j + a_n.$$
Thus
$$\rho_{\tilde V_{K}^{(1)}}(\lambda_j)\geq \limsup_{n\to \infty} \frac{1}{n+1}\log \frac{| \hat {\tilde t_{n}^{(1)}}(1,\lambda_j)|}{||\tilde t_{n}^{(1)}||_K}=\limsup_{n\to \infty} \frac{1}{n+1}\log \frac{| \lambda_j + a_n|}{||\tilde t_{n}^{(1)}||_K}.$$
For $j=1,...,d$, we recall that since $\tilde V_{K}^{(1)}\leq V_K^*$ we must have 
$$\rho_{\tilde V_{K}^{(1)}}(\lambda_j)\leq  \rho_K(\lambda_{j}).$$
In particular, $\rho_{\tilde V_{K}^{(1)}}(\lambda_1)\leq  \rho_K(\lambda_{1})$. Since $T(K,\calZ(1))=T(K,\lambda_{2})$ from (\ref{keyeq}), 
$$\lim_{n\to \infty} \frac{1}{n+1}\log \frac{1}{||\tilde t_{n}^{(1)}||_K}=-\log T(K,\lambda_{2}) =\rho_K(\lambda_2).$$
Hence we must have  
$$\limsup_{n\to \infty} \frac{1}{n+1}\log |\lambda_1 + a_n| \leq  \rho_K(\lambda_{1})- \rho_K(\lambda_{2})\leq 0.$$
Thus if $\rho_K(\lambda_{1}) < \rho_K(\lambda_{2})$, then $\lim_{n\to \infty}a_n = -\lambda_1$. Hence for all $j\not = 1$, since the $\lambda_j$ are distinct,
$$\limsup_{n\to \infty}  \frac{1}{n+1}\log |\lambda_j + a_n|= 0$$
and we have 
$$\rho_{\tilde V_{K}^{(1)}}(\lambda_2)= \cdots = \rho_{\tilde V_{K}^{(k)}}(\lambda_{d})=\rho_K(\lambda_{2}).$$

In general, for $k\in \{2,...,d-1\}$, from (\ref{hateqn}), 
$$\hat {\tilde t_{n}^{(k)}}(1,\lambda_j) =\lambda_j^k +a_{n,k-1}\lambda_j^{k-1} +\cdots +a_{n,0}.$$
Then
$$\rho_{\tilde V_{K}^{(k)}}(\lambda_j)\geq \limsup_{n\to \infty} \frac{1}{n+k}\log \frac{ |\hat {\tilde t_{n}^{(k)}}(1,\lambda_j)|}{||\tilde t_{n}^{(k)}||_K}=\limsup_{n\to \infty} \frac{1}{n+k}\log \frac{| \lambda_j^k +a_{n,k-1}\lambda_j^{k-1} +\cdots +a_{n,0}|}{||\tilde t_{n}^{(k)}||_K}.$$
By similar reasoning, under the assumption that $\rho_K(\lambda_{k}) < \rho_K(\lambda_{k+1})$,
using the fact that
$$\limsup_{n\to \infty} \frac{1}{n+k}\log \frac{1}{||\tilde t_{n}^{(k)}||_K}= -\log T(K,\lambda_{k+1})= \rho_K(\lambda_{k+1}),$$
we have 
$$\limsup_{n\to \infty}  \frac{1}{n+k} \log |\hat {\tilde t_{n}^{(k)}}(1,\lambda_j)|\leq \rho_K(\lambda_j)-\rho_K(\lambda_{k+1}) <0$$
for $j=1,...,k$ (recall (\ref{keyeq2})). Writing
$$\hat {\tilde t_{n}^{(k)}}(1,\lambda)=\lambda^k +a_{n,k-1}\lambda^{k-1} +\cdots +a_{n,0}=\prod_{j=1}^k(\lambda - r_{j,n}),$$
we have
$$\limsup_{n\to \infty}  \frac{1}{n+k} \log |\hat {\tilde t_{n}^{(k)}}(1,\lambda_j)|=\limsup_{n\to \infty}  \frac{1}{n+k} \log |\prod_{j=1}^k(\lambda_j - r_{j,n})|<0$$
for $j=1,...,k$. Thus, after possibly reordering the $k$ roots $r_{1,n},...,r_{k,n}$, we first choose a subsequence $\{n(1)\}$ of $\N$ so that $\{r_{1,n(1)}\}$ converges to $\lambda_1$; then we take a subsequence $\{n(2)\}$ of $\{n(1)\}$ so that $\{r_{2,n(2)}\}$ converges to $\lambda_2$; etc.; finally we take a subsequence $\{n(k)\}$ of $\{n(k-1)\}$ so that $\{r_{k,n(k)}\}$ converges to $\lambda_k$. For this subsequence $\{n(k)\}$, we have $\{r_{j,n(k)}\}$ converges to $\lambda_j$ for $j=1,...,k$. In particular, for $j=k+1,...,d$ we have 
$$\limsup_{n(k)\to \infty}  \frac{1}{n(k)+1} \log |\hat {\tilde t_{n(k)}^{(k)}}(1,\lambda_j)|=0.$$
We conclude that
$$\limsup_{n\to \infty}  \frac{1}{n+k} \log |\hat {\tilde t_{n}^{(k)}}(1,\lambda_j)|\geq 0$$
for $j=k+1,...,d$. Since $\rho_{\tilde V_{K}^{(k)}}(\lambda_{k+1}) \leq  \rho_K(\lambda_{k+1})$, it follows that 
$$\limsup_{n\to \infty}  \frac{1}{n+k} \log |\hat {\tilde t_{n}^{(k)}}(1,\lambda_{k+1})|= 0$$
so that, in fact, $\rho_{\tilde V_{K}^{(k)}}(\lambda_{k+1}) =  \rho_K(\lambda_{k+1})$. For $j=k+2,...,d$, we conclude that
$$\rho_K(\lambda_{k+1})=\rho_{\tilde V_{K}^{(k)}}(\lambda_{k+1})\leq \rho_{\tilde V_{K}^{(k)}}(\lambda_{j}) \leq \rho_K(\lambda_{j}).$$
This verifies (\ref{tildes}).

We have proved the following.

\begin{theorem} \label{vkkvkkt} For $K\subset A\subset \C^2$ nonpolar, we have
$$\max [V_{K}^{(1)}(z_1,z_2),...,V_{K}^{(d)}(z_1,z_2)]= V_K^*(z_1,z_2)  \ \hbox{on} \ A^0 \setminus K$$
where $V_K^{(k)}$ is defined in (\ref{vkk}). Furthermore, if  
$$\rho_K(\lambda_1) < \rho_K(\lambda_2) < \cdots < \rho_K(\lambda_d),$$
we have
$$\max [\tilde V_{K}^{(0)}(z_1,z_2),...,\tilde V_{K}^{(d-1)}(z_1,z_2)]= V_K^*(z_1,z_2)  \ \hbox{on} \ A^0 \setminus K$$
where $\tilde V_{K}^{(k)}$ is defined in (\ref{vkkt}).

\end{theorem}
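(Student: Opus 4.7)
The plan is to derive both equalities from a single application of Theorem \ref{mainth}. For the first claim let $U := \max_k V_K^{(k)}$. Since $|t_n^{(k)}|/\|t_n^{(k)}\|_K \leq 1$ on $K$, the normalized logs $\frac{1}{n}\log|t_n^{(k)}|/\|t_n^{(k)}\|_K$ are bounded above by $V_K$ on all of $A$, so $V_K^{(k)} \leq V_K^*$ and $V_K^{(k)} \leq 0$ on $K$; these properties pass to a finite maximum, giving $U \in L(A)$ with $U \leq V_K^*$ and $U \leq 0$ on $K$. By Theorem \ref{mainth}, it suffices to verify that $\rho_{U(j)} = \rho_K(\lambda_j)$ for each $j = 1,\ldots,d$. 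Since the Robin-constant operation distributes over a finite max (the limsup of the max of finitely many functions equals the max of the individual limsups), $\rho_{U(j)} = \max_k \rho_{V_K^{(k)}(j)}$.

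The inequality $V_K^{(k)} \leq V_K^*$ already gives the universal upper bound $\rho_{V_K^{(k)}(j)} \leq \rho_K(\lambda_j)$. For the matching lower bound when $j = k$, apply (\ref{robpoly}): the top homogeneous part of $t_n^{(k)} = z_1^{j(n)}\bv_k^{l(n)} + \lot$ evaluated at $(1,\lambda_k)$ equals $\bv_k(1,\lambda_k)^{l(n)} = 1$, so
$$\rho_{V_K^{(k)}(k)} \geq \limsup_n \tfrac{1}{n}\log \tfrac{1}{\|t_n^{(k)}\|_K} = -\log T(K,\lambda_k) = \rho_K(\lambda_k)$$
by Proposition \ref{prop:Tz1z2} and Corollary \ref{cor:oldT}. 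Taking $k = j$ in the max thus pins down $\rho_{U(j)} = \rho_K(\lambda_j)$ for each $j$, and Theorem \ref{mainth} delivers $U = V_K^*$ on $A^0 \setminus K$. This closes the first claim.

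For the second claim the same framework applies with $\tilde U := \max_k \tilde V_K^{(k)}$, but computing $\rho_{\tilde V_K^{(k)}(j)}$ is more delicate and is where I expect the main obstacle to lie. The top homogeneous piece of $\tilde t_n^{(k)}$ has the form $z_2^k z_1^n + a_{n,k-1}z_1^{n+1}z_2^{k-1} + \cdots + a_{n,0}z_1^{n+k}$ as in (\ref{hateqn}), with auxiliary coefficients $a_{n,i}$ that are not individually constrained by the Chebyshev extremal problem. The strict ordering hypothesis $\rho_K(\lambda_1) < \cdots < \rho_K(\lambda_d)$ is exactly what controls them: since $\|\tilde t_n^{(k)}\|_K^{1/(n+k)} \to T(K,\calZ(k)) = T(K,\lambda_{k+1})$ by Theorem \ref{chebthm}, the comparison $\tilde V_K^{(k)} \leq V_K^*$ forces
$$\limsup_n \tfrac{1}{n+k}\log|\hat{\tilde t_n^{(k)}}(1,\lambda_j)| \leq \rho_K(\lambda_j) - \rho_K(\lambda_{k+1}) < 0$$
for each $j \leq k$. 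Factoring $\hat{\tilde t_n^{(k)}}(1,\lambda) = \prod_{i=1}^{k}(\lambda - r_{i,n})$ and iteratively extracting subsequences, the $k$ roots must accumulate precisely at $\lambda_1,\ldots,\lambda_k$ in some order. Along such a diagonal subsequence, $|\hat{\tilde t_n^{(k)}}(1,\lambda_j)|$ stays bounded below for $j \geq k+1$, giving $\rho_{\tilde V_K^{(k)}(j)} \geq -\log T(K,\calZ(k)) = \rho_K(\lambda_{k+1})$.

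Setting $k = j-1$ in the maximum then yields $\rho_{\tilde U(j)} \geq \rho_K(\lambda_j)$, which combined with the universal upper bound gives $\rho_{\tilde U(j)} = \rho_K(\lambda_j)$ for each $j$. One more application of Theorem \ref{mainth} delivers $\tilde U = V_K^*$ on $A^0 \setminus K$, completing the proof.
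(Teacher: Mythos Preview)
Your argument is essentially the paper's own. The paper verifies precisely the Robin-constant relations you call (\ref{notildes}) and (\ref{tildes})---including the same factorization $\hat{\tilde t_n^{(k)}}(1,\lambda)=\prod_{i=1}^k(\lambda-r_{i,n})$ and the same diagonal-subsequence extraction forcing the roots to accumulate at $\lambda_1,\dots,\lambda_k$---and then concludes by appealing to Theorem \ref{b98}; you instead form the max directly and invoke Theorem \ref{mainth}, which is what Theorem \ref{b98} reduces to anyway. One small point: when you write ``apply (\ref{robpoly}) \ldots so $\rho_{V_K^{(k)}(k)}\geq \limsup_n \tfrac1n\log\tfrac{1}{\|t_n^{(k)}\|_K}$,'' the reference (\ref{robpoly}) only computes the Robin constant of a \emph{single} $\tfrac1n\log|p_n|$; passing the inequality through the $\limsup_n$ and the usc regularization is exactly the Hartogs-lemma step carried out in the proof of Theorem \ref{b98}, so you should cite that rather than (\ref{robpoly}) alone. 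The paper writes the same inequality equally tersely but then wraps it up by naming Theorem \ref{b98}.
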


We next verify certain properties of our extremal-like functions $V_K^{(k)}, \tilde V_K^{(k)}$. These are in $L(A)$, in particular, they are subharmonic on $A$. We assume in the following lemmas that $\epsilon >0$ is given and $R=R(\epsilon)$ is chosen so that the conditions of Proposition \ref{hmstuff} are satisfied. Thus we have pairwise disjoint domains $D_j, \ j=1,...,d$ in $A$ with $A\setminus \{z\in \C^N: |z|< R\}=D_1\cup \cdots \cup D_d$; $D_j$ is $\epsilon$ close to the linear asymptote $L_j$ of $A$; and the projection $\pi_j: D_j \to \C$ given by $\pi_j(z_1,z_2)=z_1$ is one-to-one. 

\begin{lemma}
For $k=1,...,d$, we have 
\begin{equation}\label{Vk=V}
\tilde V_K^{(k-1)}(z_1,z_2)=V_K^{(k)}(z_1,z_2) = V_K(z_1,z_2) \hbox{ for all } z\in D_k.
\end{equation}
\end{lemma}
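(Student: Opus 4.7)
The plan is to prove both equalities in (\ref{Vk=V}) by restricting to the single branch $D_k$ and exploiting the Robin-constant matchings for the direction $\lambda_k$ that were established in the preceding computations, combined with a Kelvin-transform argument in the spirit of the Remark after Theorem~\ref{inplane}. The key inputs are $\rho(V_K^{(k)},\lambda_k)=\rho_K(\lambda_k)$ from (\ref{notildes}) and, under the strict-inequality hypothesis $\rho_K(\lambda_1)<\cdots<\rho_K(\lambda_d)$, $\rho(\tilde V_K^{(k-1)},\lambda_k)=\rho_K(\lambda_k)$ from (\ref{tildes}).

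First I would observe that $V_K^{(k)}\le V_K\le V_K^{*}$ everywhere on $A$, since each normalized Chebyshev polynomial $t_n^{(k)}/\|t_n^{(k)}\|_K$ defines an element of $L(A)$ bounded by $0$ on $K$, so its limsup regularization competes in the envelope defining $V_K$; the same is true for $\tilde V_K^{(k-1)}$. Taking $R$ large enough that $K\subset B$, the difference $w:=V_K^{(k)}-V_K^{*}$ is subharmonic and nonpositive on $D_k\subset A^0\setminus K$ (since $V_K^{*}$ is harmonic there). Using that $V_K^{*}(z)-\log|z|$ converges to $\rho_K(\lambda_k)$ along the branch $A(k)\subset D_k$ together with the Robin-constant identity for $V_K^{(k)}$, one concludes $\limsup_{z\to\infty,\,z\in D_k}w(z)=0$.

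Next I would transport $w$ to $U_k:=\pi_k(D_k)\subset\C$ through the biholomorphism $\pi_k$, yielding a subharmonic function $\tilde w\le 0$ on $U_k$ whose $\limsup$ at infinity equals $0$. Applying the inversion $\zeta=1/\bar z_1$ produces a subharmonic function on a punctured disk around the origin, still bounded above by $0$, which extends subharmonically across $0$ (by the removable-singularity result for subharmonic functions bounded above) with value $0$ there. The maximum principle then forces this extension to vanish identically on the disk, so $\tilde w\equiv 0$ on a neighborhood of infinity in $U_k$. Pulling back, $w$ vanishes on a nonempty open subset of the connected domain $D_k$ (Proposition~\ref{hmstuff}); since $w\le 0$ is subharmonic and attains its maximum $0$ on this subset, the maximum principle gives $w\equiv 0$ throughout $D_k$, i.e., $V_K^{(k)}=V_K^{*}$ on $D_k$. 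The identical argument applied to $\tilde V_K^{(k-1)}$, invoking its analogous Robin-constant identity, yields $\tilde V_K^{(k-1)}=V_K^{*}$ on $D_k$. Finally, since $V_K^{*}$ is harmonic (hence continuous) on $A^0\setminus K\supset D_k$, it coincides with $V_K$ there, which completes (\ref{Vk=V}).

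The main obstacle is the extension across infinity in the Kelvin step: the Robin-constant identities supply only a $\limsup$ (not a genuine limit) of $V_K^{(k)}(z)-\log|z|$ along $D_k$, so one cannot prescribe a boundary value of $w$ at $\infty$ directly. The crucial point is that $w$ is subharmonic and bounded above by $0$ on $D_k$, which is precisely the hypothesis needed for the inverted function to extend across the puncture at $0$ with value equal to its $\limsup$. The secondary worry that $V_K^{(k)}$ (or $\tilde V_K^{(k-1)}$) may take the value $-\infty$ on a polar set is harmless, because the argument uses only the upper bound on $w$.
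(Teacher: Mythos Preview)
Your proof is correct and follows essentially the same route as the paper: form the nonpositive subharmonic difference $w=V_K^{(k)}-V_K^*$ on $D_k$, compactify at infinity so that $w$ extends subharmonically across the added point with value $0$ (using the matching of Robin constants in the direction $\lambda_k$), and conclude $w\equiv 0$ by the maximum principle. The only cosmetic differences are that the paper compactifies via the projective chart $[1:z_1:z_2]=[w_0:1:w_2]$ (so $w_0=1/z_1$) instead of projecting by $\pi_k$ and then applying the Kelvin transform, and that your propagation step from a neighborhood of infinity to all of $D_k$ is unnecessary, since the inverted punctured disk already corresponds to all of $\pi_k(D_k)$.
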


\begin{proof} We prove the equality $V_K^{(k)} = V_K$ on $D_k$; the proof that $\tilde V_K^{(k-1)} = V_K$ on $D_k$ is identical. 
Let $w=(w_0,w_2)$ be local coordinates on $D_k\subset\C^2\subset\C\PP^2$  with $z:=z(w)$ given by the correspondence $[1:z_1:z_2]=[w_0:1:w_2]$. Define 
$W(w) := V_K^{(k)}(z(w))-V_K(z(w))$ if $w\neq (0,\lambda_k)$.  Then $W$ is subharmonic and $W\leq 0$ on $D_k$. Moreover, $W$ extends across $(0,\lambda_k)$ as a subharmonic function with 
$W(0,\lambda_k)=\rho_{V_K^{(k)}}(\lambda_k)-\rho_{K}(\lambda_k)= 0$.  The point $(0,\lambda_k)$ is an interior point of the extended domain, so by the maximum principle $W\equiv 0$ in $D_k$.
\end{proof}

\begin{lemma} \label{kjequal}
Let $\Omega$ be a connected component of $A\setminus K$ and suppose $\Omega\supset (D_j\cup D_k)$.  Then $V_K^{(j)}=V_K^{(k)}=\tilde V_K^{(j-1)}=\tilde V_K^{(k-1)}=V_K$ on $\Omega$.  
\end{lemma}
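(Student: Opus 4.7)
The plan is to apply the strong maximum principle on the connected open set $\Omega$ to each of the four nonpositive subharmonic differences $V_K^{(j)}-V_K$, $V_K^{(k)}-V_K$, $\tilde V_K^{(j-1)}-V_K$, and $\tilde V_K^{(k-1)}-V_K$, using the preceding lemma as the input that each difference already vanishes on a nonempty open subset of $\Omega$. Since each of $V_K^{(k)}, V_K^{(j)}, \tilde V_K^{(k-1)}, \tilde V_K^{(j-1)}$ lies in $L(A)$ and is bounded above by $V_K^*$, and since $V_K=V_K^*$ is harmonic on $A^0\setminus K$, each of these four differences is subharmonic and $\le 0$ on $\Omega\cap A^0$.

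By the previous lemma, $V_K^{(k)}-V_K\equiv 0$ on $D_k$, which is by hypothesis a nonempty open subset of $\Omega$. Thus $V_K^{(k)}-V_K$ is a subharmonic function on $\Omega\cap A^0$ that attains its supremum value $0$ on a nonempty open set; the strong maximum principle, applied on the connected open Riemann surface $\Omega\cap A^0$, forces $V_K^{(k)}\equiv V_K$ on $\Omega\cap A^0$. The singular points of $A$ lying in $\Omega$ (if any) are isolated, and both $V_K^{(k)}$ and $V_K$ are upper semicontinuous on $A$, so the equality extends across these isolated singularities to give $V_K^{(k)}=V_K$ throughout $\Omega$.

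The same argument, substituting $D_j$ in place of $D_k$, gives $V_K^{(j)}=V_K$ on $\Omega$; and applying the identical reasoning to $\tilde V_K^{(k-1)}-V_K$ (which vanishes on $D_k$) and to $\tilde V_K^{(j-1)}-V_K$ (which vanishes on $D_j$) yields the remaining two equalities. Chaining these four identities gives $V_K^{(j)}=V_K^{(k)}=\tilde V_K^{(j-1)}=\tilde V_K^{(k-1)}=V_K$ on $\Omega$, as required. There is no real obstacle in this step: all the substantive work is done in the previous lemma, which pins each extremal-like function down to $V_K$ on a single branch-end $D_k$; the present lemma is then a direct maximum-principle propagation of that coincidence to the entire component.
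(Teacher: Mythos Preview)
Your proof is correct and rests on exactly the same ingredients as the paper's: the previous lemma pins $V_K^{(k)}$ (respectively $\tilde V_K^{(k-1)}$) to $V_K$ on $D_k$, and then the maximum principle propagates this equality throughout $\Omega$. The only difference is packaging. You invoke the strong maximum principle once on the connected open set $\Omega\cap A^0$, whereas the paper writes out a path-continuation argument: pick a continuous path $\gamma\colon[0,1]\to\Omega$ from a point of $D_j$ to a point of $D_k$, set $T=\sup\{t: V_K^{(j)}=V_K \hbox{ on } \gamma([0,t])\}$, and show $T=1$ by applying the maximum principle locally near $\gamma(T)$. Your global formulation is shorter and arguably cleaner; the paper's version is just an explicit unrolling of the same identity-principle reasoning.

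One small caveat worth being aware of: you assert that $\Omega\cap A^0$ is a connected Riemann surface. If $\Omega$ contains a singular point of $A$ at which several local analytic branches meet only through that point, removing it could in principle disconnect $\Omega\cap A^0$. The paper's path argument runs into the same difficulty when $\gamma$ passes through such a point, so this is not a gap peculiar to your approach; both proofs are at the same level of rigor here.
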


\begin{proof} We show $V_K^{(j)}=V_K^{(k)}=V_K$ on $\Omega$; the proof that $\tilde V_K^{(j-1)}=\tilde V_K^{(k-1)}=V_K$ on $\Omega$ is identical. 
Since $\Omega$ is open it is path connected.  Let $\gamma\colon[0,1]\to\Omega$ be a continuous path with $\gamma(0)\in D_j$ and $\gamma(1)\in D_k$ and let $$T:=\sup\{t\in[0,1]\colon V_K(\gamma(s))=V_K^{(j)}(\gamma(s)) \hbox{ for all } s\in[0,t]\}.$$ 
We want to show that $T=1$.

By the previous lemma, $T\geq 0$. Suppose $T<1$. Let $U$ be a neighborhood of $\gamma(T)$.  Then there exists $s\leq T$ such that $\gamma(s)\in U$ and $V_K(\gamma(s))=V_K^{(j)}(\gamma(s))$.  Using the same argument as in the previous lemma with the point $(0,\lambda_k)$ replaced by the point  $\gamma(s)\in U$, we conclude that $V_K^{(j)}\equiv V_K$ in $U$. In particular, $V_K(\gamma(T+\delta))=V_K^{(j)}(\gamma(T+\delta))$ for all sufficiently small $\delta>0$.  This contradicts the definition of $T$. 

 Thus $T=1$, hence $V_K^{(j)}=V_K$  on $\gamma([0,1])$.  Similarly, $V_K^{(k)}=V_K$ on $\gamma([0,1])$.  Since $\gamma$ was an arbitrary path connecting a point in $D_j$ with one in $D_k$, the result follows.  
\end{proof}

We conclude with some examples.

\begin{example}
Let $A=\{(z_1,z_2)\in\C^2\colon z_1^2-z_2^2=1\}$ and take $K:=\{(z_1,z_2)\in A\colon z_2\in[-1,1]\}$. Then $V_K(z_1,z_2)=\log|h(z_2)|$ where $h(\zeta):=\zeta+\sqrt{\zeta^2-1}$ is the inverse Joukowski map, for $v(z_1,z_2):=\log|h(z_2)||_A$ is in $L^+(A)$; $v=0$ on $K$; and $dd^cv=0$ on $A\setminus K$. Since $A\setminus K$ is connected, from Lemma \ref{kjequal} we must have $$V_K^{(1)}(z_1,z_2)=V_K^{(2)}(z_1,z_2)=\tilde V_K^{(0)}(z_1,z_2)=\tilde V_K^{(1)}(z_1,z_2)=\log|h(z_2)|$$ on $A\setminus K$. 
\end{example}

\begin{example} \label{twotwo} We again let $A=\{(z_1,z_2)\in \C^2: z_1^2 -z_2^2 =1\}$. The associated homogeneous variety $A_h=\{(z_1,z_2)\in \C^2: z_1^2 -z_2^2 =(z_1 -z_2)(z_1+z_2)=0\}$ is the union of two complex lines. Then $\bv_1= \frac{1}{2}(z_1-z_2)$ and $\bv_2= \frac{1}{2}(z_1+z_2)$. Take 
$K:=\{(z_1,z_2)\in A: |\bv_1|=|\bv_2|=1/2\}$. Since the basis $\calC$ for $\C[A]$ is 
$$1,\bv_1, \bv_2,\bv_1^2, \bv_2^2,...,$$
it is easy to see that $t_n^{(1)}(z_1,z_2)= \bv_1^n$ and $V_{K}^{(1)}(z_1,z_2)=\log |z_1-z_2|$ while $t_n^{(2)}(z_1,z_2)= \bv_2^n$ and $V_{K}^{(2)}(z_1,z_2)=\log |z_1+z_2|$. Here, the Robin constants $\rho_K(\lambda_1), \rho_K(\lambda_2)$ are equal and since Theorem \ref{vkkvkkt} gives
$$V_K(z_1,z_2)= \max[V_{K}^{(1)}(z_1,z_2), V_{K}^{(2)}(z_1,z_2)] \ \hbox{on} \ A^0 \setminus K,$$
we have
\begin{equation}\label{first} V_K(z_1,z_2)=\max[\log^+|z_1-z_2|,\log^+|z_1+z_2|] \ \hbox{on} \ A. \end{equation}
Note that setting $u:=z_1-z_2$ and $v:=z_1+z_2$, we have $A = \{(u,v)\in \C^2: uv = 1\}$ and $K=\{(u,v)=(e^{it}, e^{-it})\in A: t\in [0,2\pi]\}$. 
It was shown in \cite{BL} that 
\begin{equation}\label{second} V_K(u,v)=\max[ \log^+|u|, \log^+|v|]\ \hbox{for} \ (u,v)\in A. \end{equation}
 Setting $u:=z_1-z_2$ and $v:=z_1+z_2$ in (\ref{first}) recovers (\ref{second}).

\end{example}

\begin{example}  For $\epsilon >0$, let $A_{\epsilon}:=\{(z_1,z_2)\in \C^2: z_1 z_2 = \epsilon\}$ and $K_{\epsilon}:=A_{\epsilon}\cap B$ where $B=\{(z_1,z_2)\in \C^2: |z_1|\leq 1, \ |z_2|\leq  1\}$ is the unit bidisk. We claim that 
$$V_{K_{\epsilon}}(z_1,z_2) = \max[ \log^+|z_1|, \log^+|z_2|] \ \hbox{on}  \ A_{\epsilon}.$$ To see this, simply note that $v(z_1,z_2):=\max[ \log^+|z_1|, \log^+|z_2|]|_{A_{\epsilon}}\in L^+(A_{\epsilon})$; $v=0$ on $K_{\epsilon}$; and $dd^cv=0$ on $A_{\epsilon}\setminus K_{\epsilon}$. 

The coordinate axes $z_1=0$ and $z_2=0$ are linear asymptotes for $A_{\epsilon}$. Thus this example does not satisfy the italicized conditions in the introduction. However, we can take the basis
$$1,z_1,z_2,z_1^2,z_2^2,...,z_1^n,z_2^n,...$$
given by $\calS$ for $\C[A_{\epsilon}]$ to compute the Chebyshev polynomials $\tilde t_n^{(j)}$ for $j=0,1$. Since the map $t\to (t, \epsilon/t)$ from the annulus $\{t\in \C: \epsilon \leq |t| \leq 1\}$ onto $K_{\epsilon}$ is holomorphic, one can check that we have $\tilde t_n^{(0)}(z_1,z_2)=z_1^n$ and $\tilde t_n^{(1)}(z_1,z_2)=z_2^n$ so that the extremal-like functions in (\ref{vkkt}) are given by $\tilde V_{K_{\epsilon}}^{(0)}(z_1,z_2)= \log|z_1|$ and  $\tilde V_{K_{\epsilon}}^{(1)}(z_1,z_2)= \log|z_2|$. Hence we do have the equality 
$$V_{K_{\epsilon}}(z_1,z_2)=\max[\tilde V_{K_{\epsilon}}^{(0)}(z_1,z_2),\tilde V_{K_{\epsilon}}^{(1)}(z_1,z_2)] \ \hbox{on} \  A_{\epsilon}\setminus K.$$
Moreover, clearly we can still define and compute Robin constants associated to these directions (which we continue to denote as $\lambda_1$ and $\lambda_2$); in this case we have
$\rho_{K_{\epsilon}}(\lambda_1) =\rho_{K_{\epsilon}}(\lambda_2)=0$. 

Note that this function $ \max[ \log^+|z_1|, \log^+|z_2|]$ equals $V_K(z_1,z_2)$ on $A$ where $A:=\{(z_1,z_2)\in \C^2: z_1 z_2 = 0 \}$ and $K:=A\cap B$ is the union of the unit disks in the $z_1$ and $z_2$ planes. Here $A$ is reducible. To get an example where the Robin constants are different, we replace $B$ by a closed bidisk 
$$B_r:=\{(z_1,z_2): |z_1|\leq r_1, \ |z_2|\leq r_2\}$$
where $r_1 \not = r_2$. Now $K^r:=A\cap B_r$ is the union of the disks $\{(z_1,0):|z_1|\leq r_1\}$ and $\{(0,z_2):|z_2|\leq r_2\}$ so that 
$$V_{K^r}(z_1,z_2)=\max[\log^+ |z_1|/r_1, \log^+ |z_2|/r_2], \ (z_1,z_2)\in A$$
and $\rho_{K^r}(\lambda_1) =-\log r_1$ while $\rho_{K^r}(\lambda_2)=-\log r_2$. This result on the directional Robin constants also follows from Proposition 4.7 in \cite{M}.

\end{example}

\end{document}